\titleformat{\section}{\large\bfseries}{\thesection}{1em}{}
\titleformat{\subsection}[runin]{\bfseries}{\thesubsection.}{0.5em}{}[.]
\titleformat{\subsubsection}[runin]{\bfseries}{\thesubsubsection.}{0.4em}{}[.]
\DeclareMathOperator{\dive}{div}
\DeclareMathOperator{\supp}{supp}
\def\d{\,\mathrm{d}}
\def\p{\partial}
\def\wideubar{\underaccent{{\cc@style\underline{\mskip10mu}}}}
\def\Wideubar{\underaccent{{\cc@style\underline{\mskip14mu}}}}
\def\widebar{\accentset{{\cc@style\underline{\mskip10mu}}}}
\def\Widebar{\accentset{{\cc@style\underline{\mskip8mu}}}}
\newcommand{\VERTiii}[1]{{\left\vert\kern-0.3ex\left\vert\kern-0.3ex\left\vert #1
		\right\vert\kern-0.3ex\right\vert\kern-0.3ex\right\vert}}
\newcommand{\VERT}{\vert\kern-0.3ex\vert\kern-0.3ex\vert}
\newcommand{\VERTl}{\left\vert\kern-0.3ex\left\vert\kern-0.3ex\left\vert}
\newcommand{\VERTr}{\right\vert\kern-0.3ex\right\vert\kern-0.3ex\right\vert}
\newcommand{\VERTbig}{\big\vert\kern-0.3ex\big\vert\kern-0.3ex\big\vert}
\newcommand{\VERTBig}{\Big\vert\kern-0.3ex\Big\vert\kern-0.3ex\Big\vert}
\DeclareFontFamily{OMX}{MnSymbolE}{}
\DeclareSymbolFont{MnLargeSymbols}{OMX}{MnSymbolE}{m}{n}
\DeclareFontShape{OMX}{MnSymbolE}{m}{n}{
	<-6>  MnSymbolE5 <6-7>  MnSymbolE6 <7-8>  MnSymbolE7 <8-9>  MnSymbolE8 <9-10> MnSymbolE9 <10-12> MnSymbolE10 <12->   MnSymbolE12
}{}
\DeclareFontShape{OMX}{MnSymbolE}{b}{n}{
	<-6>  MnSymbolE-Bold5 <6-7>  MnSymbolE-Bold6 <7-8>  MnSymbolE-Bold7 <8-9>  MnSymbolE-Bold8 <9-10> MnSymbolE-Bold9 <10-12> MnSymbolE-Bold10 <12->   MnSymbolE-Bold12
}{}
\let\llangle\@undefined
\let\rrangle\@undefined
\DeclareMathDelimiter{\llangle}{\mathopen}%
{MnLargeSymbols}{'164}{MnLargeSymbols}{'164}
\DeclareMathDelimiter{\rrangle}{\mathclose}%
{MnLargeSymbols}{'171}{MnLargeSymbols}{'171}
\numberwithin{equation}{section}
\newtheorem{lemma}{Lemma}[section]
 \newtheorem{proposition}[lemma]{Proposition}
\newtheorem{theorem}{Theorem}[section]
\newtheorem{definition}{Definition}[section]
\newtheorem{remark}{Remark}[section]
\begin{document}
	\title{{\bf 
	Two-Dimensional Vortex Sheets for the Nonisentropic  Euler Equations: Nonlinear Stability}}
	\author{
		{\sc Alessandro Morando}\thanks{e-mail: alessandro.morando@unibs.it}\\
		{\footnotesize DICATAM, University of Brescia, Via Valotti 9, 25133 Brescia, Italy}
		\\[1mm]
		{\sc Paola Trebeschi}\thanks{e-mail: paola.trebeschi@unibs.it}\\
		{\footnotesize DICATAM, University of Brescia, Via Valotti 9, 25133 Brescia, Italy}\\[1mm]
		{\sc Tao Wang}\thanks{e-mail: tao.wang@whu.edu.cn}\\
		{\footnotesize School of Mathematics and Statistics, Wuhan University, Wuhan 430072, China}\\[-1mm]
		{\footnotesize Hubei Key Laboratory of Computational Science, Wuhan University, Wuhan 430072, China}
	}
	
	\date{}

\maketitle
\begin{abstract}
We show the short-time existence and nonlinear stability of vortex sheets for the nonisentropic compressible Euler equations in two spatial dimensions, based on the weakly linear stability result of Morando--Trebeschi (2008) \cite{MT08MR2441089}.
{The missing normal derivatives are compensated through the equations of the linearized vorticity and entropy when deriving higher-order energy estimates.}
The proof of the resolution for this nonlinear problem follows from certain \emph{a priori} tame estimates on the effective linear problem {in the usual Sobolev spaces} and a suitable Nash--Moser iteration scheme.
		
		\vspace{2mm}
		\noindent{\bf Keywords:} Nonisentropic fluid, compressible vortex sheet, characteristic boundary, existence, nonlinear stability, Nash--Moser iteration.
		
		\vspace{2mm}
		\noindent{\bf Mathematics Subject Classification:}  35L65,  
				76N10,  
		35Q35,  
		35R35,  
		76E17  
	\end{abstract}
	
	\tableofcontents
	

\section{Introduction and the Main Result}\label{sct1}


The motion of a perfect polytropic ideal gas in the whole plane $\mathbb{R}^2$ is described by the compressible Euler equations
\begin{align}
\label{E1}
\left\{\begin{aligned}
&(\partial_t +{\bm{u}}\cdot\nabla) p+{\bm\gamma}p\nabla\cdot {\bm{u}}=0,\\
&\rho(\partial_t +{\bm{u}}\cdot\nabla){\bm{u}}+\nabla p=0,\\
&(\partial_t +{\bm{u}}\cdot\nabla )s=0,
\end{aligned}\right.
\end{align}
where the pressure $p=p(t,x)\in \mathbb R$, velocity ${\bm{u}}=(v(t,x),u(t,x))^{\mathsf{T}}\in \mathbb R^2$,  and entropy $s=s(t,x) \in \mathbb R$ are unknown functions of time $t$ and position $x=(x_1,x_2)^{\mathsf{T}}\in  \mathbb R^2$.  The density $\rho$ obeys the constitutive law
\begin{align}\label{polytropic}
\rho=\rho(p,s):=A p^{\frac{1}{{\bm\gamma}}} \mathrm{e}^{-\frac{s}{{\bm\gamma}}},
\end{align}
where $A>0$ is given and ${\bm\gamma}>1$ is the adiabatic exponent of the gas.

According to \citet{L57MR0093653}, a weak solution $(p,{\bm{u}},s)$ of \eqref{E1} that is smooth on either side of a smooth surface $\Gamma(t):=\{x_2=\varphi (t,x_1)\}$
is said to be a {\em vortex sheet} (even called a {\em contact discontinuity}) provided that it is a classical solution to \eqref{E1} on each side of $\Gamma(t)$ and the following Rankine--Hugoniot conditions hold at each point of $\Gamma(t)$:
\begin{equation}
\label{RH1}
\partial_t\varphi={\bm{u}}^+\cdot \nu={\bm{u}}^-\cdot \nu ,\quad\quad p^+=p^-.
\end{equation}
Here $\nu:=(-\partial_{x_1}\varphi,1)^{\mathsf{T}}$ is a spatial normal vector to $\Gamma(t)$.  As usual, ${\bm{u}}^{\pm}$, $p^{\pm}$, and $s^{\pm}$ denote the restrictions of ${\bm{u}}$, $p$, and $s$ to both sides $\{\pm(x_2-\varphi(t,x_1))>0\}$ of $\Gamma(t)$, respectively.
Recall that conditions \eqref{RH1} are derived from the conservative form of the compressible Euler equations.
These conditions yield that the normal velocity and pressure are continuous across $\Gamma(t)$. Hence the only possible jumps displayed by a vortex sheet concern the tangential velocity and entropy.  Remark also that the first two identities in \eqref{RH1} are the eikonal equations:
\begin{equation}\notag 
\partial_t\varphi+\lambda_2(p^+,{\bf
  u}^+,s^+,\partial_{x_1}\varphi)=0,\quad
\partial_t\varphi+\lambda_2(p^-,{\bm{u}}^-,s^-,\partial_{x_1}\varphi)=0,
\end{equation}
where $\lambda_2(p,{\bm{u}},s,\xi):={\bm{u}}\cdot(\xi,-1)^{\mathsf{T}}$ denotes the second
characteristic field of system \eqref{E1}.

We are interested in the structural stability of vortex sheets to nonisentropic compressible Euler equations \eqref{E1} with the initial data being a perturbation of piecewise constant vortex sheets:
\begin{align} \label{CVS1}
(p,{\bm{u}},s)=\begin{cases}
(\bar p_r,\bar v_{r},\bar u_{r},\bar s_r) &{\rm if}\  x_2>\sigma t+nx_1,\\
(\bar p_l,\bar v_{l},\bar u_{l},\bar s_l) &{\rm if}\  x_2<\sigma t+nx_1.
\end{cases}
\end{align}
Here, $\bar p_{r,l}$, $\bar v_{r,l}$, $\bar u_{r,l}$, $\bar s_{r,l}$, $\sigma,$ and  $n$ are constants and satisfy
\begin{equation*}
\sigma+\bar v_rn-\bar u_r=0,\quad \sigma+\bar v_l n-\bar u_l=0,\quad  \bar p_r=\bar p_l=:\bar{p}>0.
\end{equation*}
Upon changing observer if necessary, we may assume without loss of generality
\begin{equation*}
\bar u_r=\bar u_l=\sigma=n=0,\quad \bar v_r=-\bar v_l=\bar{v}>0.
\end{equation*}

In such nonlinear stability problem, interface $\Gamma(t)$ (namely, function $\varphi$) is a part of unknowns. The usual approach consists of straightening unknown interface $\Gamma(t)$ by means of a suitable change of coordinates in $\mathbb{R}^3$, in order to reformulate the free boundary problem in a fixed domain. Precisely, unknowns $(p,{\bm{u}},s)$ are replaced by functions
\begin{equation*}
(p_{\sharp}^{\pm},{\bm{u}}_{\sharp}^{\pm},s_{\sharp}^{\pm})(t,x_1,x_2) :=(p^{\pm},{\bm{u}}^{\pm},s^{\pm})(t,x_1,\Phi^{\pm}(t,x_1,x_2)) ,
\end{equation*}
where $\Phi^{\pm}$ are smooth functions satisfying
\begin{equation}\label{Phi.a}
\Phi^{\pm}(t,x_1,0)=\varphi(t,x_1)\quad \mathrm{and}\quad  \pm\partial_{x_2}\Phi(t,x_1,x_2)\geq \kappa>0\quad \mathrm{if}\ x_2\geq 0.
\end{equation}
Hereafter we drop the ``$\sharp$'' index and set $U:=(p,v,u,s)^{\mathsf{T}}$ for convenience.
Then the construction of vortex sheets for system \eqref{E1} amounts to proving the existence of smooth solutions $(U^{\pm},\Phi^{\pm})$ to the following initial-boundary value problem:
\begin{subequations}   \label{E0}
	\begin{alignat}{2} \label{E0.a}
	&\mathbb{L}(U^{+},\Phi^{+}):=L(U^+,\Phi^+)U^+ =0&\qquad &\mathrm{if}\  x_2>0,\\\label{E0.a1}
	&\mathbb{L}(U^{-},\Phi^{-}):=L(U^-,\Phi^-)U^- =0&\qquad &\mathrm{if}\  x_2>0,\\ \label{E0.b}
	&\mathbb{B}(U^{+},U^{-},\varphi)=0& \qquad  &\mathrm{if}\ x_2=0,\\  \label{E0.c}
	&(U^{\pm},\varphi)|_{t=0}=(U^{\pm}_0,\varphi_0),& \qquad &
	\end{alignat}
\end{subequations}
where $\mathbb{B}$ denotes the boundary operator
\begin{align}\label{B.def}
\mathbb B(U^+ ,U^-,\varphi):=
\begin{pmatrix}
(v^+-v^-)\vert_{x_2=0} \partial_{x_1}\varphi -(u^+-u^-)\vert_{x_2=0}\\
\partial_t \varphi +v^+\vert _{x_2=0} \partial_{x_1} \varphi -u^+\vert_{x_2=0}\\
(p^+-p^-)\vert_{x_2=0}
\end{pmatrix}.
\end{align}
Owing to transformation \eqref{Phi.a}, differential operator $L(U,\Phi)$  takes the form
\begin{align}\label{L.def}
L(U,\Phi):=I_4\partial_t+A_1(U) \partial_{x_1}+\widetilde{A}_2(U,\Phi)\partial_{x_2},
\end{align}
where $I_4$ is the $4\times4$ identity matrix,
\begin{align*}
\widetilde{A}_2(U,\Phi):=\frac1{\partial_{x_2}\Phi}
\left(A_2(U)-\partial_t\Phi I_4-\partial_{x_1}\Phi A_1(U)\right),
\end{align*}
and
\setlength{\arraycolsep}{4pt}
\begin{align}\notag 
&A_1(U):=\begin{pmatrix}
v & {\bm\gamma}p & 0 & 0\\
1/\rho & v & 0 & 0\\
0 & 0 & v & 0\\
0 & 0 & 0 & v
\end{pmatrix},
\quad A_2(U):=\begin{pmatrix}
u & 0 &{\bm\gamma} p & 0\\
0 & u & 0 & 0\\
1/\rho& 0 & u & 0\\
0 & 0 & 0 & u
\end{pmatrix}.
\end{align}

Observe that equations \eqref{Phi.a}--\eqref{E0} are not enough to determine functions $\Phi^{\pm}$.
Majda's pioneering work in \cite{M83aMR683422, M83bMR699241} specifies $\Phi^{\pm}(t,x_1,x_2)=\pm x_2+\varphi(t,x_1)$, which turns out to be appropriate in the study of shock waves. However, this particular choice does not work well for vortex sheets where boundary $\{x_2=0\}$ becomes characteristic. As in Francheteau--M\'{e}tivier \cite{FM00MR1787068}, we require functions $\Phi^{\pm}$ to satisfy the following eikonal equations:
\begin{equation}\label{Phi.b}
\partial_t\Phi^{\pm}+\lambda_2(p^{\pm},{\bm{u}}^{\pm},s^{\pm},\partial_{x_1}\Phi^{\pm})=0\qquad \mathrm{if}\ x_2\geq 0.
\end{equation}
This choice of $\Phi^{\pm}$  has the advantage to simplify much the expression of  equations \eqref{E0.a}--\eqref{E0.a1}. More importantly,  
the rank of the boundary matrix for problem \eqref{E0}  keeps constant on the whole domain $\{x_2\geq 0\}$, which allows the application of the Kreiss symmetrizer technique to problem \eqref{E0} in the spirit of Majda--Osher \cite{MO75MR0410107}.

\begin{remark}\label{rmk_coupling}
{\rm It is worthwhile noticing that in interior equations \eqref{E0.a}--\eqref{E0.a1}, the ``$+$'' and ``$-$'' states are decoupled, whereas in boundary conditions \eqref{E0.b} the coupling of the two states occurs, \emph{cf.}\;\eqref{B.def}.}
\end{remark}


In the new variables, piecewise constant state \eqref{CVS1} corresponds to the following trivial solution of \eqref{Phi.a}--\eqref{E0.b} and \eqref{Phi.b}
\begin{equation}\label{CVS0}
\widebar{U}^{\pm}=(\bar{p}, \pm \bar{v}, 0, \bar{s}^{\pm})^{\mathsf{T}},\quad
\widebar{\Phi}^{\pm}(t,x_1,x_2)=\pm x_2,
\end{equation}
with $\bar{p}>0$ and $\bar{v}>0$.
Let us denote by $\bar{c}_\pm=c(\bar{p},\bar{s}^{\pm})$ the sound speeds corresponding to the constant states $\widebar{U}^\pm$,
where for the polytropic gas \eqref{polytropic},
\begin{align*}
c(p,s):=\sqrt{p_{\rho}(\rho,s)}=\sqrt{\frac{\gamma e^{s/\gamma}}{A p^{\frac{1}{{\bm\gamma}}-1}}}\,.
\end{align*}

We aim to show the short-time existence of solutions to nonlinear problem \eqref{Phi.a}--\eqref{E0} and \eqref{Phi.b} provided the initial data is sufficiently close to \eqref{CVS0}. Our main result is stated as follows.
\begin{theorem}\label{thm}
	Let $T>0$  and $\mu\in\mathbb N$ with $\mu\ge 13$. Assume that background state \eqref{CVS0} satisfies the stability conditions
	\begin{equation}\label{H1}
	2\bar{v}>(\bar{c}_+^{\frac23}+\bar{c}_-^{\frac23})^{\frac32},\quad 2\bar{v}\neq\sqrt{2}(\bar{c}_++\bar{c}_-).
	\end{equation}
	Assume further that  the initial data $U^{\pm}_0$ and $\varphi_0$ satisfy the compatibility conditions	up to order $\mu$ in the sense of Definition \ref{def:CC}, and that $(U^{\pm}_0-\widebar{U}^{\pm},\varphi_0)\in H^{\mu+1/2}(\mathbb{R}^2_+)\times H^{\mu+1}(\mathbb{R})$ has a compact support.
	Then there exists $\delta>0$ such that, if $\|U^{\pm}_0-\widebar{U}^{\pm}\|_{H^{\mu+1/2}(\mathbb{R}^2_+)}+\|\varphi_0\|_{H^{\mu+1}(\mathbb{R})}\leq\delta$, then there exists a solution $(U^\pm, \Phi^\pm, \varphi)$ of \eqref{Phi.a}--\eqref{E0} and \eqref{Phi.b} on the time interval $[0,T]$ satisfying
$$
(U^{\pm}-\widebar{U}^{\pm},\Phi^{\pm}-\widebar{\Phi}^{\pm})\in H^{\mu-7}((0,T)\times\mathbb{R}^2_+),\qquad
\varphi\in H^{\mu-6}((0,T)\times\mathbb{R}).
$$
\end{theorem}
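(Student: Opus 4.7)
The strategy follows the Nash--Moser iteration framework developed for weakly stable compressible vortex sheets, since the linear stability estimate of Morando--Trebeschi \cite{MT08MR2441089} loses a fixed number of derivatives. The proof splits into three parts: (i) reduction to zero initial data via a smooth approximate solution, (ii) an \emph{a priori} tame estimate in the usual Sobolev spaces for the effective linearized problem, and (iii) convergence of a Nash--Moser iteration. For part (i), I would build an approximate solution $(U^{\pm,a},\Phi^{\pm,a},\varphi^a)$, close to the background \eqref{CVS0}, that satisfies the compatibility conditions up to order $\mu$ together with the boundary and eikonal equations exactly, and whose interior residual $\mathbb{L}(U^{\pm,a},\Phi^{\pm,a})$ vanishes to high order at $t=0$. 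This is obtained by inductively solving the trace equations at $t=0$ from the PDE. Subtracting this approximate solution transforms \eqref{Phi.a}--\eqref{E0}, \eqref{Phi.b} into a problem with vanishing initial data and a small, compactly supported source term, to which the iteration can be applied.

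Part (ii) is the heart of the proof. For a basic state close to $(\widebar{U}^{\pm},\widebar{\Phi}^{\pm})$, I would apply Alinhac's \emph{good unknown} substitution $\dot U^{\pm}:=\delta U^{\pm}-(\delta\Phi^{\pm}/\partial_{x_2}\Phi^{\pm})\,\partial_{x_2}U^{\pm}$, which removes the interior dependence on the front perturbation and routes all the coupling with $\delta\varphi$ through the boundary conditions. The Morando--Trebeschi estimate then yields an $L^2$-type bound with a fixed derivative loss. To iterate in higher Sobolev norms and to achieve tameness one differentiates tangentially and treats commutators carefully. The key subtlety, highlighted in the abstract, is that $\{x_2=0\}$ is characteristic: the symmetrizable system controls tangential derivatives of the full state $U^{\pm}$, but the normal derivatives of the noncharacteristic components are missing. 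These are recovered by noting that the linearized vorticity and entropy satisfy transport equations along the streamlines of $\Phi^{\pm}$; manipulating these equations yields $\partial_{x_2}$-control of the missing components with no additional loss, after which the remaining interior equations furnish the normal derivatives of $p^{\pm}$ and $u^{\pm}$.

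For part (iii), one introduces smoothing operators $S_\theta$ adapted to the half-space, defines the iterates $U_{n+1}^{\pm}=U_n^{\pm}+\delta U_n^{\pm}$ where $\delta U_n^{\pm}$ solves the linearized problem at the smoothed state $S_{\theta_n}U_n^{\pm}$ with source given by the accumulated quadratic and smoothing errors, and bounds these errors using the tame estimate. Balancing the exponents in H\"ormander's scheme produces convergence in $H^{\mu-7}$, where the loss $7$ reflects both the derivative loss of the linearized estimate and the loss incurred in building the approximate solution, consistent with the threshold $\mu\geq 13$.

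I expect the main obstacle to be establishing the tame higher-order estimate with a precisely controlled loss: the simultaneous presence of a characteristic boundary requiring compensation via vorticity and entropy, a basic state of limited Sobolev regularity whose commutators must be estimated tamely, and the fixed loss inherited from the weakly stable linearized problem together force a careful bookkeeping of norms, more delicate than in the isentropic case since the entropy adds an extra transported quantity and an extra component that must be followed through every estimate.
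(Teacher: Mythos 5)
Your proposal follows essentially the same route as the paper: Alinhac good unknowns and the Morando--Trebeschi $L^2$ estimate with loss of one tangential derivative, a tame higher-order estimate in the usual Sobolev spaces where the missing normal derivatives are compensated through the transport equations for the linearized vorticity and entropy, an approximate solution built from the compatibility conditions to reduce to zero initial data, and a Coulombel--Secchi-type Nash--Moser scheme yielding the solution in $H^{\mu-7}$. One minor slip worth noting: it is the normal derivatives of the \emph{characteristic} components (the quantity $W_1$ and the entropy component $W_4$) that are missing and must be recovered via vorticity and entropy, whereas the normal derivatives of the noncharacteristic components follow directly from the interior equations because the boundary matrix is $\mathrm{diag}\,(0,1,1,0)$; this does not change the strategy, which coincides with the paper's.
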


Compressible vortex sheets, along with shocks and rarefaction waves, are fundamental waves that  play an important role in the study of general entropy solutions to multidimensional hyperbolic systems of conservation laws. The stability of one uniformly stable shock and multidimensional rarefaction waves has been obtained in \cite{M83aMR683422,M83bMR699241} and \citet{A89MR976971}.

It was observed  long time ago in \citet{M58MR0097930} (\emph{cf}.\;Coulombel--Morando \cite{CM04MR2159807} for using only algebraic tools) that for  two-dimensional nonisentropic Euler equations \eqref{E1}, piecewise constant vortex sheets \eqref{CVS0} are violently unstable unless the following stability criterion is satisfied:
\begin{align} \label{H2}
2\bar{v}\geq (\bar{c}_+^{\frac23}+\bar{c}_-^{\frac23})^{\frac32},
\end{align}
while vortex sheets \eqref{CVS0}  are linearly stable under condition \eqref{H2}.
In the seminal  work of Coulombel--Secchi \cite{CS08MR2423311}, building on their linear stability results in \cite{CS04MR2095445} and a modified Nash--Moser iteration,  the short-time existence and nonlinear stability of  compressible vortex sheets are established for the two-dimensional  {\it isentropic} case under condition \eqref{H2} (as a strict inequality). These results have been generalized very recently by Chen--Secchi--Wang \cite{CSW17Preprint} to cover the relativistic case.

As for three-dimensional gas dynamics, vortex sheets are showed in Fejer--Miles \cite{FM63MR0154509} to be always violently unstable, which is analogous to the Kelvin--Helmholtz instability for incompressible fluids. In contrast, Chen--Wang  \cite{CW08MR2372810} and \citet{T09MR2481071}  proved independently the nonlinear stability of compressible current-vortex sheets for three-dimensional compressible magnetohydrodynamics (MHD). This result indicates that non-paralleled magnetic fields stabilize the motion of three-dimensional compressible vortex sheets.

Extending the results in \cite{CS04MR2095445}, the first two authors in \cite{MT08MR2441089} show the $L^2$--estimates for the linearized problems of \eqref{Phi.a}--\eqref{E0} and \eqref{Phi.b} around background state \eqref{CVS0} under condition \eqref{H2} (as a strict inequality), and that around a small perturbation of \eqref{CVS0} under \eqref{H1}. The main goal of the present paper is to prove the structurally nonlinear stability of two-dimensional nonisentropic vortex sheets  by adopting the Nash--Moser iteration scheme developed in \cite{H76MR0602181,CS08MR2423311} ,which has been successfully applied to the plasma-vacuum interface problem \cite{ST14MR3151094}, three-dimensional compressible steady flows \cite{WY15MR3328144}, and MHD contact discontinuities \cite{MTT18MR3766987}.
{To work in the usual Sobolev spaces, we compensate the missing normal derivatives through the equations for the linearized vorticity and entropy when deriving the higher-order energy estimates.}

It is worth noting that in the statement of Theorem \ref{thm}, the inequality
\begin{equation}\label{double}
2\bar{v}\neq\sqrt{2}(\bar{c}_++\bar{c}_-)
\end{equation}
is required in addition to stability condition \eqref{H2} (with strict inequality).
This is due to the fact that the linearized problem about piece-wise constant basic state \eqref{CVS0} with $\bar{v}$ taking the critical value  {in} \eqref{double} satisfies an \emph{a priori} estimate with additional loss of regularity from the data, which is related to the presence of a double root of the associated Lopatinski\u{\i} determinant (\emph{cf.}\;\cite[Theorem 3.1]{MT08MR2441089}). At the subsequent level of the variable coefficient linearized problem about a perturbation of \eqref{CVS0}, the authors in \cite{MT08MR2441089} were not able to handle this further loss of regularity, thus the case of $\bar{v}=(\bar{c}_++\bar{c}_-)/\sqrt 2$ is still open.
Notice also that in the isentropic case (where $\bar{c}_+=\bar{c}_-=\bar{c}$), value $(\bar{c}_+^{\frac23}+\bar{c}_-^{\frac23})^{\frac32}$ coincides with $\sqrt{2} (\bar{c}_++\bar{c}_-) $ and condition \eqref{H1} reduces to the supersonic condition $\bar{v}>\sqrt 2\bar{c}$ studied in Coulombel--Secchi \cite{CS08MR2423311}.

The plan of this paper is as follows. In \S\,\ref{sct:ELP}, we introduce the effective linear problem and its reformulation. Section \ref{sct:well} is devoted to the proof of a well-posedness result of the effective linear problem in the usual Sobolev space $H^s$ with $s$ large enough. In particular, the weighted Sobolev spaces and norms are introduced in \S\,\ref{sct:weighted} for the sake of completeness.
In \S\,\ref{sct:AS},  by imposing necessary  {compatibility} conditions on the initial data, we introduce the smooth ``approximate solution'', which reduces problem \eqref{Phi.a}--\eqref{E0} and \eqref{Phi.b} into a nonlinear one with zero initial data. In \S\,\ref{sct:N-M}, we use a modification of the Nash--Moser iteration scheme to derive the existence of solutions to the reduced problem and conclude the proof of our main result, namely Theorem \ref{thm}.

\section{The Effective Linear Problem and Reformulation}\label{sct:ELP}
In this section,  we employ the so-called ``good unknowns'' of \citet{A89MR976971} to obtain the effective linear problem and transform it into an equivalent problem with a  constant and diagonal boundary matrix.
\subsection{The effective linear problem}
We are going to linearize the nonlinear problem \eqref{Phi.a}--\eqref{E0} and \eqref{Phi.b} around a basic state $(U_{r,l},\Phi_{r,l}):=(p_{r,l},v_{r,l},u_{r,l},s_{r,l},\Phi_{r,l})^{\mathsf{T}}$ given by a perturbation of the stationary solution \eqref{CVS0}. The index $r$ (resp.\;$l$) denotes the state on the right (resp.\;on the left) of the interface (after change of variables). More precisely, the perturbation
\begin{equation}\label{eq_state}
(\dot{U}_{r,l}(t,x_1,x_2),\dot{\Phi}_{r,l}(t,x_1,x_2)):=(U_{r,l}(t,x_1,x_2),\Phi_{r,l}(t,x_1,x_2))-(\widebar{U}^\pm,\widebar{\Phi}^\pm)
\end{equation}
is assumed to satisfy
\begin{align}
&\mathrm{supp}\,\big(\dot{U}_{r,l},\dot{\Phi}_{r,l}\big)\subset \{-T\le t\le 2T,\ x_2\geq 0,\ |x|\leq R\},\label{bas.c1}\\ \label{bas.c2}
&\dot{U}_{r,l}\in W^{2,\infty}(\Omega),\quad  \dot{\Phi}_{r,l}\in W^{3,\infty}(\Omega),\quad
\big\|\dot{U}_{r,l}\big\|_{W^{2,\infty}(\Omega)}+\big\|\dot{\Phi}_{r,l}\big\|_{W^{3,\infty}(\Omega)}\leq K,
\end{align}
where $T$, $R$, and $K$ are positive constants and $\Omega$ denotes the half-space $\{(t,x_1,x_2)\in\mathbb{R}^3:x_2>0\}$.
Moreover, we assume that $(\dot{U}_{r,l},\dot{\Phi}_{r,l})$ satisfies constraints \eqref{Phi.a}, \eqref{Phi.b}, and Rankine--Hugoniot conditions \eqref{E0.b}, that is,
\begin{subequations}\label{bas.eq}
	\begin{alignat}{2}
	\label{bas.eq.1}&\partial_t \Phi_{r,l}+v_{r,l}\partial_{x_1}\Phi_{r,l}-u_{r,l}=0&\qquad &\mathrm{if}\ x_2\geq 0,\\
	\label{bas.eq.2}&\pm\partial_{x_2}{\Phi}_{r,l}\geq \kappa_0>0&\qquad &\mathrm{if}\ x_2\geq 0,\\
	\label{bas.eq.3}&{\Phi}_{r}={\Phi}_{l}=\varphi &\qquad &\mathrm{if}\ x_2= 0,\\
	\label{bas.eq.4}&\mathbb{B}\big({U}_r,{U}_l,{\varphi}\big)=0&\qquad &\mathrm{if}\ x_2= 0,
	\end{alignat}
\end{subequations}
for a suitable positive constant $\kappa_0$.

The linearized problem of \eqref{Phi.a}--\eqref{E0} and \eqref{Phi.b} around the basic state $(U_{r,l},\Phi_{r,l})$ is given by
\begin{align*}
\left\{\begin{aligned}
&\mathbb L^\prime(U_{r,l},\Phi_{r,l})(V^\pm,\Psi^\pm):=\left.\frac{\mathrm{d}}{\mathrm{d}\theta}\mathbb L(U_{r,l}+\theta V^\pm,\Phi_{r,l}+\theta\Psi^\pm)\right|_{\theta=0}=f^\pm,&&\quad x_2>0,\\
&\mathbb B^\prime(U_{r,l}, \Phi_{r,l})(V^+ ,V^- ,\psi) :=\left.\frac{\mathrm{d}}{\mathrm{d}\theta}\mathbb B(U_r+\theta V^+,U_l+\theta V^-,\varphi+\theta \psi)\right|_{\theta=0}=g,&&\quad x_2=0,
\end{aligned}\right.
\end{align*}
with given source terms $f^\pm=f^\pm(t,x)$ and boundary data $g=g(t,x_1)$, where $\psi$ denotes the common trace of $\Psi^\pm$ on the boundary $\{x_2=0\}$.

In order to get rid of the first order terms in $\Psi^\pm$ involved in the expression of linear operators $\mathbb L^\prime(U_{r,l},\Phi_{r,l})(V^\pm,\Psi^\pm)$, we utilize the ``good unknowns'' of \citet{A89MR976971}:
\begin{equation}\label{Alinhac}
\dot V^+ := V^+ -\frac{\Psi^+}{\partial_{x_2}\Phi_r} \partial_{x_2} U_r,\quad
\dot V^- := V^- -\frac{\Psi^-}{\partial_{x_2}\Phi_l} \partial_{x_2} U_l,
\end{equation}
to get (\emph{cf.}\;\citet[Proposition\,1.3.1]{M01MR1842775})
\begin{equation}\label{L.prime}
\begin{split}
&\mathbb L^\prime(U_{r,l},\Phi_{r,l})(V^\pm,\Psi^\pm) =L(U_{r,l},\Phi_{r,l})\dot V^\pm+\mathcal{C}(U_{r,l},\Phi_{r,l})\dot V^\pm+\frac{\Psi^\pm}{\partial_{x_2}\Phi_{r,l}}\partial_{x_2}\left\{\mathbb L(U_{r,l},\Phi_{r,l})\right\},
\end{split}
\end{equation}
where differential operators $L(U_{r,l},\Phi_{r,l})$ are defined in \eqref{L.def}, while
\begin{equation}\label{C.cal}
\mathcal{C}(U_{r,l},\Phi_{r,l})X:=\mathrm{d} A_1(U_{r,l})X\,\partial_{x_1}U+\big\{\mathrm{d} A_2(U_{r,l})X-\partial_{x_1}\Phi_{r,l}\mathrm{d} A_1(U_{r,l})X\big\}\frac{\partial_{x_2}U_{r,l}}{\partial_{x_2}\Phi_{r,l}},
\end{equation}
for all $X\in\mathbb R^4$. Notice that matrices $\mathcal{C}(U_{r,l},\Phi_{r,l})$ are $C^{\infty}$--functions
of $(\dot U_{r,l},\nabla \dot{U}_{r,l},\nabla\dot\Phi_{r,l})$ that vanish at the origin.

Let us denote $\dot V:=(\dot V^+,\dot V^-)^{\mathsf{T}}$, $\nabla_{t,x_1} \psi=(\partial_t \psi,
\partial_{x_1} \psi)^{\mathsf{T}}$. In terms of new unknowns \eqref{Alinhac}, a direct computation yields
\begin{align}\label{B.prime}
\mathbb B^\prime(U_{r,l}, \Phi_{r,l})(\dot V\vert_{x_2=0},\psi) =
\underline{b} \nabla_{t,x_1} \psi +\bm{b}_{\sharp}\psi+\Wideubar{M}  \dot{V}|_{_{x_2=0}}.
\end{align}
Coefficients $\underline{b}$, $\bm{b}_{\sharp}$, and $\Wideubar{M}$ are defined by
\begin{align}\label{b.bar}
&\underline{b} (t,x_1) := \begin{pmatrix}
0 & (v_r -v_l)|_{x_2=0} \\
1 & {v_r}|_{x_2=0}\\
0 & 0 \end{pmatrix},\quad
\bm{b}_{\sharp} (t,x_1) := \Wideubar{M}(t,x_1)\left.\begin{pmatrix}\displaystyle\frac{\partial_{x_2} U_r}{\partial_{x_2}\Phi_r}\\[3mm] \displaystyle\frac{\partial_{x_2} U_l}{\partial_{x_2}\Phi_l}\end{pmatrix}\right|_{x_2=0},\\
&\Wideubar{M}(t,x_1) :=\begin{pmatrix}
0 & \partial_{x_1}\varphi & -1 & 0 & 0 & -\partial_{x_1}\varphi & 1 & 0\\
0 & \partial_{x_1}\varphi & -1 & 0 & 0 & 0 & 0 & 0\\
1 & 0 & 0 & 0 & -1 & 0 & 0 & 0 \end{pmatrix}. \label{M.bar}
\end{align}
We note that $\bm{b}_{\sharp}$ is a $C^{\infty}$--function of $(\partial_{x_2}\dot{U}_{r,l}|_{x_2=0},  \partial_{x_1}\varphi, \partial_{x_2}\dot\Phi_{r,l}|_{x_2=0})$ vanishing at the origin.

In view of the nonlinear results obtained in \cite{A89MR976971,FM00MR1787068,CS08MR2423311}, we neglect the last term in \eqref{L.prime}, which is the zero-th order term in $\Psi^{\pm}$, and consider the following \emph{effective linear problem}:
\begin{subequations} \label{ELP}
	\begin{alignat}{3}   \label{ELP.1}
	&\mathbb L^\prime_{e}(U_{r},\Phi_{r})\dot{V}^{+} := L(U_{r},  \Phi_{r}) \dot{V}^{+}
	+\mathcal{C}(U_{r},\Phi_{r}) \dot{V}^{+}=f^{+}&\quad  &  \textrm{if } x_2>0,\\ \label{ELP.1.1}
	&\mathbb L^\prime_{e}(U_{l},\Phi_{l})\dot{V}^{-} := L(U_{l},  \Phi_{l}) \dot{V}^{-}
	+\mathcal{C}(U_{l},\Phi_{l}) \dot{V}^{-}=f^{-}&\quad  & \textrm{if } x_2>0,\\
	&\mathbb B'_e(U_{r,l}, \Phi_{r,l})(\dot{V},\psi) :=
	\underline{b} \nabla_{t,x_1} \psi +\bm{b}_{\sharp}\psi+\Wideubar{M}  \dot{V}|_{_{x_2=0}} =g&\quad  &  \textrm{if } x_2=0, \label{ELP.2}\\
	&\Psi^+=\Psi^-=\psi&\quad  &\textrm{if } x_2=0\,, \label{ELP.3}
	\end{alignat}
\end{subequations}
where $\mathbb B^\prime_e\equiv\mathbb B^\prime$ defined in \eqref{B.prime} (differently from interior equations \eqref{ELP.1}--\eqref{ELP.1.1}, in linearized boundary conditions \eqref{ELP.2} all terms involved in \eqref{B.prime} are considered).
It follows from \eqref{bas.c2} that $\underline{b}, \Wideubar{M}\in W^{2,\infty}(\mathbb R^2)$, $\bm{b}_{\sharp}\in W^{1,\infty}(\mathbb R^2)$, $\mathcal{C}(U_{r,l},\Phi_{r,l}) \in W^{1,\infty}(\Omega)$, and the coefficients of the operators $L(U_{r,l},  \Phi_{r,l}) $ are in $W^{2,\infty}(\Omega)$. We will consider the dropped terms in \eqref{ELP.1}--\eqref{ELP.1.1} as error terms at each Nash--Moser iteration step in the subsequent nonlinear analysis.

We observe that linearized boundary conditions \eqref{ELP.2} depend on the traces of $\dot{V}^\pm$ only through the vector $\mathbb{P}(\varphi){\dot{V}}:=(\mathbb{P}(\varphi){\dot{V}}^+,\mathbb{P}(\varphi){\dot{V}}^-)^{\mathsf{T}}$ with
\begin{equation}
\label{P.bb}
\mathbb{P}(\varphi){\dot{V}}^\pm:= ( \dot{V}^\pm_{1} ,\, \dot{V}^\pm_{3}-\partial_{x_1} \varphi \, \dot{V}^\pm_{2})^{\mathsf{T}}.
\end{equation}
In view of \eqref{bas.eq.1}, the coefficients for the normal derivative $\partial_{x_2}$ in operators $\mathbb L^\prime_e(U_{r,l},\Phi_{r,l})$ take the form:
\begin{equation}\label{A2tilde2}
\begin{split}
\widetilde{A}_2(U_{r,l},\Phi_{r,l})=\frac1{\partial_{x_2}\Phi_{r,l}}\begin{pmatrix}0 & -{\bm\gamma} p_{r,l}\partial_{x_1}\Phi_{r,l} & {\bm\gamma} p_{r,l} & 0\\
-\partial_{x_1}\Phi_{r,l}/\rho_{r,l}& 0 & 0 & 0\\
1/\rho_{r,l} & 0 & 0 & 0\\
0 & 0 & 0 & 0\end{pmatrix},
\end{split}
\end{equation}
with $\rho_{r,l}=\rho(p_{r,l}, s_{r,l})$. Hence the boundary $\{x_2=0\}$ is characteristic for problem \eqref{ELP} and we expect  to control only the traces of the {\it noncharacteristic components} of unknown $\dot V^\pm$, namely the vector $\mathbb{P}(\varphi){\dot{V}}^\pm$.

\subsection{Reformulation}
It is more convenient to rewrite the effective linear problem \eqref{ELP} into an equivalent problem with a constant and diagonal boundary matrix.
This can be achieved because the boundary matrix for \eqref{ELP} has constant rank on the whole closed half-space $\{x_2\geq 0\}$.

Let us consider the coefficient matrices of $\partial_{x_2}\dot{V}^\pm$  in \eqref{ELP.1}--\eqref{ELP.1.1} (\emph{cf.}\;\eqref{A2tilde2}). It is easily verified that the eigenvalues are
\begin{equation*}
\lambda^*=0\quad{\rm with\ multiplicity}\ 2,\quad
\lambda_{2}=\frac{c(p,s)}{\partial_{x_2}\Phi}\langle\partial_{x_1}\Phi\rangle,\quad
\lambda_{3}=-\frac{c(p,s)}{\partial_{x_2}\Phi}\langle\partial_{x_1}\Phi\rangle,
\end{equation*}
where the notation $\langle\partial_{x_1}\Phi\rangle =
\sqrt{1+(\partial_{x_1}\Phi)^2}$ has been used and we drop the subscripts $r,l$ from the basic state $(U_{r,l},\Phi_{r,l})$ for simplicity. Define
\begin{equation}\label{T}
T(U,\Phi):=\begin{pmatrix}
0 & \langle\partial_{x_1}\Phi\rangle &
\langle\partial_{x_1}\Phi\rangle & 0 \\[2mm]
1 & -\displaystyle\frac{c}{{\bm\gamma} p}\partial_{x_1}\Phi & \displaystyle\frac{c}{\gamma
	p}\partial_{x_1}\Phi & 0\\[3mm]
\partial_{x_1}\Phi & \displaystyle\frac{c}{{\bm\gamma}p} & -\displaystyle\frac{c}{{\bm\gamma}p} & 0\\[3mm]
0 & 0 & 0 & 1
\end{pmatrix},
\end{equation}
and $A_0 (U, \Phi)=\mathrm{diag}\,(1,\lambda_2^{-1},\lambda_3^{-1},1)$.
Then  it follows
\begin{equation*}
A_0(U,\Phi)T^{-1}(U,\Phi)\widetilde A_2(U,\Phi)T(U,\Phi)={\bm{I}}_2 :=\mathrm{diag}\, (0,1,1,0).
\end{equation*}
In view of this identity, we perform the transformation:
\begin{equation}\label{W.def}
W^+:=T^{-1}(U_r,\Phi_r)\, \dot V^+ ,\quad
W^-:=T^{-1}(U_l,\Phi_l)\, \dot V^- ,
\end{equation}
and set for brevity
\begin{equation*}
T_{r,l} := T (U_{r,l}, \Phi_{r,l})  ,\quad
{\bm{A}}_0^{r,l} := A_0 (U_{r,l},\Phi_{r,l}).
\end{equation*}
Multiplying \eqref{ELP.1}--\eqref{ELP.1.1} by  ${\bm{A}}_0^{r,l} T_{r,l}^{-1}$ yields the equivalent system of \eqref{ELP.1}--\eqref{ELP.1.1}:
\begin{equation}\label{W.eq}
{\bm{A}}_0^{r,l} \, \partial_t W^\pm +{\bm{A}}_1^{r,l} \, \partial_{x_1} W^\pm
+{\bm{I}}_2 \, \partial_{x_2} W^\pm +{\bm{C}}^{r,l} \, W^\pm
=F^\pm,
\end{equation}
where we have set $F^\pm={\bm{A}}_0^{r,l}T_{r,l}^{-1}f^\pm$,  ${\bm{A}}_1^{r,l}:={\bm{A}}_0^{r,l} T^{-1}_{r,l} A_1(U_{r,l}) T_{r,l},$ and
\begin{equation*}
\begin{split}
&{\bm{C}}^{r,l}:={\bm{A}}_0^{r,l}T^{-1}_{r,l}\left[ \partial_tT_{r,l} +  A_1(U_{r,l}) \partial_{x_1} T_{r,l}
+ \widetilde A_2(U_{r,l},\Phi_{r,l})\partial_{x_2} T_{r,l} + \mathcal{ C}(U_{r,l},\Phi_{r,l})T_{r,l}\right].
\end{split}
\end{equation*}
The coefficient matrices ${\bm{A}}^{r,l}_j\in W^{2,\infty}(\Omega)$ ($j=0,1$) and ${\bm{C}}^{r,l}\in W^{1,\infty}(\Omega)$. Moreover, ${\bm{A}}^{r,l}_j$ are $C^{\infty}$--functions of their arguments $(U_{r,l},\nabla\Phi_{r,l})$, and ${\bm{C}}^{r,l}$ are $C^{\infty}$--functions of their arguments  $(U_{r,l},\nabla U_{r,l},\nabla\Phi_{r,l},\nabla^2\Phi_{r,l})$. In terms of the vector $W:=(W^+,W^-)^{\mathsf{T}}$ as defined by \eqref{W.def}, the boundary conditions \eqref{ELP.2}--\eqref{ELP.3} become equivalent to
\begin{subequations}\label{W.bdy}
	\begin{alignat}{3}
	&\mathcal{B}(W,\psi):=
	\underline{b} \, \nabla_{t, x_1} \psi +\bm{b}_{\sharp} \, \psi
	+{\bm{M}}{W}|_{x_2=0} =g\quad  &\mathrm{if}\ x_2=0, \label{W.bdy.1}\\
	&\Psi^+=\Psi^-=\psi\quad  &\mathrm{if}\ x_2=0, \label{W.bdy.2}
	\end{alignat}
\end{subequations}
where $\underline{b}$, $\mathbf{b}_\sharp$ are given by \eqref{b.bar}, and
\begin{equation}\label{M}
{\bm{M}}:=\Wideubar{M}\left. \begin{pmatrix}
T_r & 0 \\
0 & T_l \end{pmatrix}\right|_{x_2=0}.
\end{equation}
It is clear that matrix ${\bm{M}}\in W^{2,\infty}(\mathbb R^2)$ is a $C^{\infty}$--function of $(U_{r,l},\nabla\Phi_{r,l})\vert_{x_2=0}$.

\begin{remark}
{\rm We find that the trace of vector ${\bm{M}}W$ involved in boundary conditions \eqref{W.bdy.1} depends only on the trace of the {\it noncharacteristic part} of vector $W^\pm$, {\it i.e.} sub-vector $W^{\rm nc}:=(W^+_2,W^-_2,W^+_3,W^-_3)^{\mathsf{T}}$. Moreover, from \eqref{W.def} and \eqref{P.bb}, one can compute explicitly
\begin{equation}\label{P.bb2}
\mathbb P(\varphi)\dot V^\pm\vert_{x_2=0}=
\left.\begin{pmatrix}
\langle\partial_{x_1}\varphi\rangle (W^\pm_2+W^\pm_3)\\[2mm] \frac{c_{r,l}}{{\bm\gamma} p}\langle\partial_{x_1}\varphi\rangle^2 (W^\pm_2-W^\pm_3)
\end{pmatrix}\right|_{x_2=0} ,
\end{equation}
where $p:=p_{r,l}\vert_{x_2=0}$ and $c_{r,l}=c(p_{r,l},s_{r,l})$.
In the following we shall feel free to rewrite the product ${\bm{M}}W\vert_{x_2=0}$ as ${\bm{M}}W^{\rm nc}\vert_{x_2=0}$ with a slight abuse of notation.}
 \end{remark}
\section{Well-posedness of the Effective Linear Problem}\label{sct:well}
In this section, we are going to establish a well-posedness result for the effective linear problem \eqref{ELP} in the usual Sobolev space $H^s$ with $s$ large enough. The essential point is to deduce certain {\it a priori tame} estimates in $H^s$.
For a general hyperbolic problem with a characteristic boundary,
there is a loss of control near the boundary of normal derivatives in \emph{a priori} energy estimates,
and hence Sobolev spaces with conormal regularity are required (see \citet{S96MR1405665} and the references therein).
However, for problem \eqref{ELP}, we can manage to compensate the loss of normal derivatives and obtain \emph{a priori} estimates in the usual Sobolev spaces. This is achieved by employing the idea in \cite{CS08MR2423311} and estimating the missing derivatives through the equations of the linearized vorticity and entropy. For all notation used below, we address the reader to Section \ref{sct:weighted}.

The main result in this section is stated as follows:

\begin{theorem}\label{thm:L}
	Let $T>0$ and $s\in [3,\tilde{\alpha}]\cap \mathbb{N}$ with any integer $ \tilde{\alpha}\geq 3$.
	Assume that the stationary solution \eqref{CVS0} satisfies \eqref{H1},
	and that perturbations $(\dot{U}_{r,l},\dot{\Phi}_{r,l})$
	belong to $H^{s+3}_{\gamma}(\Omega_T)$ for all $\gamma\geq 1$ and satisfy \eqref{bas.c1}--\eqref{bas.eq},
	and
	\begin{align} \label{H.L.1}
	\|(\dot{U}_{r,l},\nabla\dot{\Phi}_{r,l})\|_{H^5_{\gamma}(\Omega_T)}
	+\|(\dot{U}_{r,l},\p_{x_2}\dot{U}_{r,l},\nabla\dot{\Phi}_{r,l})|_{x_2=0}\|_{H^4_{\gamma}(\omega_T)}\leq K.
	\end{align}
	Assume further that $(f^\pm,g)\in H^{s+1}(\Omega_T)\times H^{s+1}(\omega_T)$ vanish in  the past.
	Then there exists a positive constant $K_0$, which is independent of $s$ and  $T$, and there exist two constants $C>0$ and $\gamma\geq 1$,
	which depend solely on $K_0$, such that, if $K\leq K_0$, then problem \eqref{ELP}
	admits a unique solution $(\dot{V}^{\pm},\psi)\in H^{s}(\Omega_T)\times H^{s+1}(\omega_T)$ that vanishes in the past
	and obeys the following tame estimate:
	\begin{align} \notag
	&\|\dot{V}\|_{H^{s}_{\gamma}(\Omega_T)}+\|\mathbb{P}({\varphi})\dot{V}\,\!|_{x_2=0}
	\|_{H^{s}_{\gamma}(\omega_T)}+\|\psi\|_{H^{s+1}_{\gamma}(\omega_T)}\\
	&\leq C\big\{ \|f\|_{H^{s+1}_{\gamma}(\Omega_T)}
	+\|g\|_{H^{s+1}_{\gamma}(\omega_T)}+\big(\|f\|_{H^4_{\gamma}(\Omega_T)}
	+\|g\|_{H^4_{\gamma}(\omega_T)}\big)\|(\dot{U}_{r,l},\dot{\Phi}_{r,l})\|_{H^{s+3}_{\gamma}(\Omega_T)} \big\}\,,
	\label{tame}
	\end{align}
	where $\dot V:=(\dot V^+,\dot V^-)$, $\mathbb P(\varphi)\dot V:=(\mathbb P(\varphi)\dot V^+, \mathbb P(\varphi)\dot V^-)$, $f:=(f^+,f^-)$ and we have used definition \eqref{pm_notazione}.
\end{theorem}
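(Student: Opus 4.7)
The plan is to bootstrap from the weighted $L^2$ well-posedness result of \cite{MT08MR2441089} to arbitrary $H^s$ regularity, while carefully tracking how the constants depend on the basic state so as to produce the tame estimate \eqref{tame}. The argument has three main ingredients: a purely tangential higher-order estimate obtained by commuting $\partial_t$ and $\partial_{x_1}$ through the system and reapplying the $L^2$ estimate; a recovery of the missing normal derivatives via the linearized vorticity and entropy; and the Moser-type product and composition inequalities that enforce the tame structure.

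First I would use the linear $L^2$ stability result of \cite{MT08MR2441089} as the seed, which under \eqref{H1} and the smallness \eqref{H.L.1} controls $\dot V$, its noncharacteristic trace $\mathbb P(\varphi)\dot V|_{x_2=0}$, and $\psi$ in weighted norms by $f$ and $g$ at a cost of one extra tangential derivative on the data and a large parameter $\gamma$. Next, to the reformulated problem \eqref{W.eq}--\eqref{W.bdy}, I apply the tangential derivatives $Z^\alpha := \partial_t^{\alpha_0}\partial_{x_1}^{\alpha_1}$ with $|\alpha|\le s$. The commutators $[Z^\alpha,\mathbb L'_e]\dot V$ and $[Z^\alpha,\mathcal B]\dot V$ are split, via the Moser inequality
\begin{equation*}
\|uv\|_{H^s_\gamma}\le C\bigl(\|u\|_{L^\infty}\|v\|_{H^s_\gamma}+\|u\|_{H^s_\gamma}\|v\|_{L^\infty}\bigr),
\end{equation*}
into a part absorbed via the $L^2$ estimate applied to $Z^\alpha\dot V$ (using \eqref{H.L.1} for smallness) and a part in which only low-order norms of $(\dot V,\psi)$ multiply the high-order norm $\|(\dot U_{r,l},\dot\Phi_{r,l})\|_{H^{s+3}_\gamma}$ of the coefficients. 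This yields a tame bound on all purely tangential $H^s$-derivatives of the solution.

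The crucial third step compensates the missing normal derivatives in the characteristic components $(W_1^\pm,W_4^\pm)$, which the boundary matrix $\bm I_2$ in \eqref{W.eq} fails to detect. Following the idea in \cite{CS08MR2423311}, I form the linearized vorticity $\dot\omega^\pm$ and the linearized entropy $\dot s^\pm$ associated with the good unknowns \eqref{Alinhac}. Using \eqref{E1}$_3$ and differentiating the velocity equations, $\dot\omega^\pm$ and $\dot s^\pm$ satisfy transport equations with respect to the material derivative $\partial_t+v_{r,l}\partial_{x_1}+(\widetilde{A}_2)_{jk}\partial_{x_2}$ of the basic state, which is noncharacteristic away from the boundary; the eikonal constraint \eqref{bas.eq.1} guarantees that the boundary itself is characteristic only for the acoustic modes, so the vorticity/entropy transports are genuinely first-order ODEs in $x_2$ along streamlines. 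Solving these equations for $\partial_{x_2}\dot\omega^\pm$ and $\partial_{x_2}\dot s^\pm$ expresses the missing normal derivatives of $\dot V^\pm$ as linear combinations of tangential derivatives (already estimated) plus commutator error terms that are again tame by Moser. A straightforward induction on the total order, or equivalently on the number of normal derivatives applied, upgrades the tangential $H^s$ estimate of the previous step into the full $H^s$ estimate \eqref{tame}.

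The main obstacle I foresee is the third step: coupling the vorticity/entropy recovery with the tame bookkeeping. The vorticity/entropy equations have coefficients that are themselves functions of $(U_{r,l},\nabla\Phi_{r,l})$, so their commutators with $\partial_{x_2}^k$ produce products whose tame control demands a carefully ordered induction in which high-order norms of the basic state never meet high-order norms of $\dot V$ that have not yet been bounded. Once the a priori estimate \eqref{tame} is in hand, existence and uniqueness follow from the $L^2$ duality argument already used in \cite{MT08MR2441089}: one constructs the solution via the adjoint problem (which satisfies an analogous backward estimate thanks to the time-reversibility of the symmetrizable structure) combined with a standard regularize/pass-to-the-limit procedure, and the vanishing in the past is preserved because $(f^\pm,g)$ vanish in the past and the energy method propagates this property.
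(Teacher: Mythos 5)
Your overall architecture coincides with the paper's (seed the argument with the weighted $L^2$ estimate of \cite{MT08MR2441089}, estimate tangential derivatives with Moser-type inequalities, compensate the missing normal derivatives through the linearized vorticity and entropy, then pass from the a priori tame estimate to well-posedness via the dual problem and the $L^2$ theory of \cite{C05MR2138641}), but the mechanism you describe for the crucial normal-derivative step is wrong and would fail if executed literally. Because the basic state satisfies the eikonal constraint \eqref{bas.eq.1} on the \emph{whole} half-space (not merely on the boundary), the transport operator acting on the linearized vorticity \eqref{xi.def} and on the entropy is exactly $\partial_t+v_{r,l}\partial_{x_1}$, cf.\;\eqref{xi.eq}: the normal transport speed vanishes identically. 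Hence these equations are \emph{not} ``first-order ODEs in $x_2$ along streamlines,'' and one cannot ``solve them for $\partial_{x_2}\dot\omega^\pm$ and $\partial_{x_2}\dot s^\pm$'' --- those normal derivatives simply do not appear in the equations. (Your statement that the boundary is characteristic only for the acoustic modes is also backwards: the zero eigenvalues of $\widetilde A_2$ in \eqref{A2tilde2} correspond to the vorticity/entropy-type components $W_1^\pm,W_4^\pm$, while the acoustic components $W_2^\pm,W_3^\pm$ are the noncharacteristic ones whose traces are controlled.) The correct recovery, which is the content of Lemmas \ref{lem.tame2}--\ref{lem.tame4}, is different: one first estimates $\dot\xi^\pm$ \emph{itself} in $H^{s-1}_\gamma$ by the standard energy method applied to the purely tangential transport equation \eqref{xi.eq}; then $\partial_{x_2}W_1^\pm$ is recovered \emph{algebraically} from the definition \eqref{xi.def} of the vorticity together with the normal derivatives of the noncharacteristic components read off directly from the interior system \eqref{dx2W2W3}, yielding \eqref{dx2W1} and \eqref{dx2W1-3}, after which an induction on the number of normal derivatives closes the estimate; finally, for the entropy $W_4^\pm=\dot s^\pm$ no recovery of a normal derivative is needed at all, since its transport equation contains no $\partial_{x_2}$ and can be differentiated in all variables, giving the full $H^s_\gamma$ bound directly.

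Two further points deserve attention. First, in the tangential step you cannot simply ``absorb'' the commutators after applying the $L^2$ estimate to each $Z^\alpha\dot V$: since \eqref{E:L1a} loses one tangential derivative, the order-$k$ commutator terms would be measured in $L^2(H^1_\gamma)$ and thus reappear at order $k+1$; the paper (following \cite{CS08MR2423311}) avoids this by writing a single boundary value problem for the full collection $W^{(k)}$ of all tangential derivatives of order $k$, in which those terms become zeroth-order in the enlarged unknown, and only genuinely lower-order terms are sent to the source and handled by Gagliardo--Nirenberg/Moser. Second, to pass from the $\gamma$-dependent estimate \eqref{tame5} to the fixed-constant tame estimate \eqref{tame} one must fix $\gamma$, apply the estimate at the low order $s=3$, use Sobolev embedding and the smallness of $K$ to bound the $W^{1,\infty}$-norms of $(\dot V,\mathbb P(\varphi)\dot V|_{x_2=0},\psi)$ by $\|f\|_{H^4_\gamma}+\|g\|_{H^4_\gamma}$, and then substitute back; your proposal gestures at ``tracking constants'' but omits this closing step, which is where the particular low norms $H^4_\gamma$ in \eqref{tame} come from.
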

We consider the particular case where the source terms $f^\pm$ and $g$ vanish in the past,
which corresponds to the nonlinear problem \eqref{Phi.a}--\eqref{E0} and \eqref{Phi.b} with zero initial data.
The general case is postponed to the nonlinear analysis which involves the construction of
a so-called {\it approximate solution}.

\begin{remark}
{\rm Let us notice that in inequality \eqref{tame} the involved norms are the natural ones in the {exponentially weighted} Sobolev spaces $H^s_\gamma(\Omega_T)$ and $H^s_\gamma(\omega_T)$ for a suitable fixed $\gamma$ (see Section \ref{sct:weighted}), even though the functions considered in the statement of Theorem \ref{thm:L} are taken in the usual Sobolev spaces $H^s(\Omega_T)$, $H^s(\omega_T)$. However the functions involved in the statement above vanish for negative time. This guarantees that they also belong to the weighted Sobolev spaces.
}
\end{remark}

\subsection{Weighted Sobolev spaces and norms} \label{sct:weighted}
To be definite, we introduce certain notations on weighted Sobolev spaces and norms.
For every real number $s$, $H^s(\mathbb{R}^2)$ denotes the usual Sobolev
space of order $s$, which is endowed with the $\gamma-$weighted norm
\begin{equation}\label{gamma_norm}
\Vert u\Vert^2_{s,\gamma}:=\frac{1}{(2\pi)^2} \int_{\mathbb{R}^2}(\gamma^2+|\xi|^2)^{s} |\widehat{u}(\xi)|^2\d \xi,
\end{equation}
where $\gamma\geq 1$ and $\widehat{u}$ is the Fourier transform of any distribution $u$ in $\mathbb{R}^2$.
We will abbreviate the usual norm  of $L^2(\mathbb{R}^2)$ as
$\|\cdot\|:=\|\cdot\|_{0,\gamma} . $

Throughout the paper, we introduce the notation: $A\lesssim B$ ($B\gtrsim A$) if $A\leq CB$ holds uniformly
for some positive constant $C$ that is \emph{independent} of $\gamma$.
The notation, $A\sim B$, means that both $A\lesssim B$ and $B\lesssim A$ are satisfied.
Then, for $k\in\mathbb{N}$, one has
\begin{align}\label{norm_sim}
\|u\|^2_{k,\gamma}\sim \sum_{|\alpha|\leq k}\gamma^{2(k-|\alpha|)}\|\partial^{\alpha} u\|^2
\qquad\,\textrm{for all }u\in H^{k}(\mathbb{R}^2),
\end{align}
where $\partial^\alpha:=\partial^{\alpha_0}_t\partial^{\alpha_1}_{x_1}\partial^{\alpha_2}_{x_2}$ with $\alpha=(\alpha_0,\alpha_1,\alpha_2)\in\mathbb N^3$ and $|\alpha|:=\alpha_0+\alpha_1+\alpha_2.$

For $s\in\mathbb R$ and $\gamma\geq 1$, we denote the $\gamma-$weighted Sobolev space $H^s_{\gamma}(\mathbb{R}^2)$ by
\begin{align}\notag
H^{s}_{\gamma}(\mathbb{R}^2)&:=\left\{
u\in\mathcal{D}'(\mathbb{R}^2)\,:\, \mathrm{e}^{-\gamma t}u(t,x_1)\in
H^{s}(\mathbb{R}^2) \right\},
\end{align}
and its norm by $\|u\|_{H^{s}_{\gamma}(\mathbb{R}^2)}:=\|\mathrm{e}^{-\gamma t}u\|_{s,\gamma}$. We write $L^2_{\gamma}(\mathbb{R}^2):=H^0_{\gamma}(\mathbb{R}^2)$ and $\|u\|_{L^2_{\gamma}(\mathbb{R}^2)}:=\|\mathrm{e}^{-\gamma t}u\|$ for short. We define $L^2(\mathbb{R}_+;H^{s}_{\gamma}(\mathbb{R}^2))$, briefly denoted by $L^2(H^s_{\gamma})$, as the space of distributions with finite $L^2(H^s_{\gamma})$--norm, where
\begin{align*}
\|u\|_{L^2(H^s_{\gamma})}^2:=\int_{\mathbb{R}_+}\|u(\cdot,x_2)\|_{H^s_{\gamma}(\mathbb{R}^2)}^2\d x_2\,.
\end{align*}

Let us recall that $\Omega$ denotes the half-space $\{(t,x_1,x_2)\in\mathbb{R}^3:x_2>0\}$ and let $\mathbb R^2_+:=\{(x_1,x_2)\in\mathbb{R}^2:x_2>0\}$.
The boundaries $\partial\Omega$ and $\partial\mathbb R^2_+$ will be respectively identified to $\mathbb{R}^2$ and $\mathbb R$.
We set $L^2_{\gamma}(\Omega):=L^2(H^0_{\gamma})$ and $\|u\|_{L^2_{\gamma}(\Omega)}$.
For all $k\in\mathbb N$ and $\gamma\geq 1$, the weighted Sobolev space $H^k_{\gamma}(\Omega)$ is defined as
\begin{align*}
H^{k}_{\gamma}(\Omega)&:=\left\{ u\in\mathcal{D}'(\Omega)\,:\, \mathrm{e}^{-\gamma t}u\in   H^{k}(\Omega) \right\}.
\end{align*}

For any real number $T$, we denote $\Omega_T:=(-\infty,T)\times\mathbb R^2_+$ and $\omega_T:=(-\infty,T)\times\mathbb{R}\simeq\partial\Omega_T$.
For all $k\in\mathbb{N}$ and $\gamma\geq 1$, we define the weighted space $H^k_{\gamma}(\Omega_T)$ as
$$
H^k_{\gamma}(\Omega_T):=\left\{u\in\mathcal{D}'(\Omega_T)\,:\, \mathrm{e}^{-\gamma t}u\in H^{k}(\Omega_T)\right\}.
$$
In view of relation \eqref{norm_sim}, the norm on $H^k_{\gamma}(\Omega_T)$ is defined as
\begin{align} \label{norm.def}
\|u\|_{H^k_{\gamma}(\Omega_T)}^2:=\sum_{|\alpha|\leq k}\gamma^{2(k-|\alpha|)}\|\mathrm{e}^{-\gamma t} \partial^{\alpha}u\|_{L^2(\Omega_T)}^2.
\end{align}
The norm on $H^k_{\gamma}(\omega_T)$ is defined in the same way.
For all $k\in\mathbb{N}$ and $\gamma\geq 1$, we define  {the} space $L^2(\mathbb{R}_+;H^k_{\gamma}(\omega_T))$,
briefly denoted by $L^2(H^k_{\gamma}(\omega_T))$, as the space of distributions with
finite $L^2(H^k_{\gamma}(\omega_T))$--norm, where
\begin{align} \label{norm_L2Hk}
\|u\|_{L^2(H^k_{\gamma}(\omega_T))}^2:=\int_{\mathbb{R}_+}\|u(\cdot,x_2)\|_{H^k_{\gamma}(\omega_T)}^2\d x_2
=\sum_{\alpha_0+\alpha_1\leq k}
\gamma^{2(k-\alpha_0-\alpha_1)}\|\mathrm{e}^{-\gamma t} \partial_t^{\alpha_0}\partial_1^{\alpha_1}u\|_{L^2(\Omega_T)}^2.
\end{align}
We write $L^2_{\gamma}(\Omega_T):=L^2(H^0_{\gamma}(\omega_T))$
and $\|u\|_{L^2_{\gamma}(\Omega_T)}:=\|u\|_{L^2(H^0_{\gamma}(\omega_T))}$ for brevity.

\medskip
Since the most of functions we are dealing with have double $\pm$ states, throughout the paper we will make use of the following general shortcut notation: for any function space $\mathcal X$ and every function $W=(W^+,W^{-})$, with scalar/vector $\pm$ states $W^\pm=W^\pm(t,x_1,x_2)$, we set
\begin{equation}\label{pm_notazione}
\Vert W\Vert_{\mathcal X}:=\sum\limits_{\pm}\Vert W^\pm\Vert_{\mathcal X}\,.
\end{equation}

\subsection{Well-posedness in $L^2$}
In this subsection, we apply the well-posedness result in $L^2$ of \citet{C05MR2138641} to the effective linear problem \eqref{ELP}.
We recall that system \eqref{ELP.1}--\eqref{ELP.1.1} is symmetrizable hyperbolic with Friedrichs symmetrizer $\mathcal{s}_{r,l} :=\mathrm{diag}\,(1/\rho_{r,l},{\bm\gamma} p_{r,l},{\bm\gamma} p_{r,l},{\bm\gamma} p_{r,l})$, and observe that the coefficients of the linearized operators
satisfy the regularity assumptions of \cite{C05MR2138641}.

The first two authors prove in \cite[Theorem\;4.1]{MT08MR2441089} that problem \eqref{ELP} satisfies a basic $L^2$--{\it a priori} estimate with a loss of one tangential derivative as follows.
\begin{theorem}[\cite{MT08MR2441089}]\label{thm:MT}
Assume that the stationary solution \eqref{CVS0} satisfies \eqref{H1}, and that the basic state $( U_{r,l}, \Phi_{r,l})$ satisfies \eqref{bas.c1}--\eqref{bas.eq}.
Then there exist constants $K_0>0$ and $\gamma_0\geq 1$ such that,
if $K\leq K_0$ and $\gamma\geq \gamma_0$,
then, for all $(\dot{V}^\pm,\psi)\in H^2_{\gamma}(\Omega)\times H^2_{\gamma}(\mathbb{R}^2)$, the following estimate holds:
\begin{align}
&\gamma \|\dot{V}\|_{L^2_{\gamma}(\Omega)}^2+\|\mathbb{P}({\varphi})\dot{V}|_{x_2=0}\|_{L^2_{\gamma}(\mathbb{R}^2)}^2
+\|\psi\|_{H^1_{\gamma}(\mathbb{R}^2)}^2\notag \\
&\quad \lesssim  \gamma^{-3}\big\|\mathbb L^\prime_e\dot V\big\|_{L^2(H_{\gamma}^1)}^2
+\gamma^{-2}\big\|\mathbb B'_e(U_{r,l}, \Phi_{r,l})(\dot{V}, \psi)\big\|_{H^1_{\gamma}(\mathbb{R}^2)}^2,
\label{E:MT}
\end{align}
where $\mathbb L^\prime_{e}\dot{V}:=(\mathbb L^\prime_{e}(U_{r},\Phi_{r})\dot{V}^{+}, \mathbb L^\prime_{e}(U_{l},\Phi_{l})\dot{V}^{-})$.
\end{theorem}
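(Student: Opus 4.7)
The plan is to derive the tame $H^s$ estimate \eqref{tame} by bootstrapping from the basic $L^2$ estimate in Theorem \ref{thm:MT}, then deduce existence by a duality argument. First I would pass to the reformulated problem \eqref{W.eq}--\eqref{W.bdy} where the boundary matrix is constant, diagonal and of rank two: this singles out the noncharacteristic components $W^{\rm nc}=(W_2^+,W_3^+,W_2^-,W_3^-)^{\mathsf T}$ from the characteristic ones $W^{\rm c}=(W_1^\pm,W_4^\pm)^{\mathsf T}$. For tangential regularity, I would apply a tangential derivative $\partial_{\mathrm{tan}}^{\alpha}=\partial_t^{\alpha_0}\partial_{x_1}^{\alpha_1}$ with $|\alpha|\leq s$, commute it through $L(U_{r,l},\Phi_{r,l})$ and the boundary operator, and apply estimate \eqref{E:MT} to $\partial_{\mathrm{tan}}^{\alpha}\dot V$. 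The commutators produce multilinear expressions in $\partial\dot V$, $\partial U_{r,l}$, $\partial\Phi_{r,l}$ which must be controlled by Moser-type tame products in the weighted spaces $H^k_\gamma$: each commutator splits as a low-high term bounded by $\|\dot V\|_{H^3_\gamma}\|(\dot U_{r,l},\dot\Phi_{r,l})\|_{H^{s+3}_\gamma}$ plus a high-low term $\|\dot V\|_{H^s_\gamma}\cdot K$, the latter being absorbed by choosing $K_0$ small and $\gamma$ large. This yields control of the full tangential norm of $\dot V$, of $\mathbb P(\varphi)\dot V|_{x_2=0}$, and of $\psi$ in $H^{s+1}_\gamma(\omega_T)$.

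The main obstacle is the recovery of $\partial_{x_2}$-derivatives, since the boundary is characteristic and only $W^{\rm nc}$ is controlled at the boundary. For $\partial_{x_2}W^{\rm nc}$, I would read the noncharacteristic block of equation \eqref{W.eq}: since the restriction of $\bm I_2$ to that block is the identity, one may solve algebraically for $\partial_{x_2}W^{\rm nc}$ in terms of $\partial_t W$, $\partial_{x_1}W$ and zeroth-order terms, trading one normal derivative for one tangential derivative and gaining a factor bounded by $K_0$. For the characteristic part $W^{\rm c}$, which encodes the linearized entropy and vorticity of $\dot V^\pm$, the linearization of the transport equation $(\partial_t+\bu\cdot\nabla)s=0$ and of the equation satisfied by $\omega=\partial_{x_1}u-\partial_{x_2}v$ (rewritten in the straightened variables through $\Phi_{r,l}$) gives \emph{scalar transport equations} whose coefficients are the basic-state velocity and where no normal derivative of the noncharacteristic components appears. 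One may then differentiate these transport equations in $(t,x_1,x_2)$ up to order $s-1$ and perform a standard weighted energy estimate in $L^2_\gamma(\Omega_T)$, using the already-controlled forcing (which involves tangential derivatives of $W^{\rm nc}$ and of the coefficients); this closes $\|\partial_{x_2}^kW^{\rm c}\|_{H^{s-k}_\gamma(\Omega_T)}$ by induction on $k$. Combining with the previous step recovers all mixed norms entering \eqref{norm.def}.

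Third, I would reconstruct $\dot V^\pm$ from $W^\pm$ via \eqref{W.def}: since $T_{r,l}$ and $T_{r,l}^{-1}$ are smooth functions of $(U_{r,l},\nabla\Phi_{r,l})$ that equal their constant values at the background state up to $\dot U_{r,l}$ and $\nabla\dot\Phi_{r,l}$, a tame Moser composition estimate transfers the bounds on $W$ to $\dot V$ with the same tame structure, producing exactly the right-hand side of \eqref{tame}. Finally, for existence and uniqueness: uniqueness follows from the $L^2$ estimate \eqref{E:MT} applied to the difference of two solutions (which vanishes in the past). For existence, I would first establish by the same tangential-commutation method an analogous a priori estimate for the formal adjoint of the effective linear problem (with a sign-reversed time direction, so that solutions vanish \emph{in the future}); the dual $L^2$ estimate in the spirit of \cite{MT08MR2441089} yields weak $L^2_\gamma$ solutions for data in $H^1_\gamma$, and the higher tangential/normal regularity then follows by the estimates just derived, combined with a standard Friedrichs mollification in $(t,x_1)$ to justify the commutator manipulations. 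The smallness assumption \eqref{H.L.1} is used precisely to close the high-low split at the $L^\infty$ level through Sobolev embedding of $H^5_\gamma$ and of the traces in $H^4_\gamma(\omega_T)$, ensuring that the constants $C$ and $\gamma$ depend only on $K_0$ and not on $s$.
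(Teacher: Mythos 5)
Your proposal does not prove the statement at hand: it \emph{assumes} it. The statement to be established is the basic $L^2$ estimate \eqref{E:MT} with loss of one tangential derivative for the effective linearized problem \eqref{ELP} around the variable-coefficient basic state, under the stability condition \eqref{H1}. Your very first sentence takes ``the basic $L^2$ estimate in Theorem \ref{thm:MT}'' as the starting point and then describes how to bootstrap it to the tame $H^s$ estimate \eqref{tame} and to well-posedness. That bootstrap (tangential commutation, recovery of normal derivatives of $W^{\rm nc}$ from the equations and of the characteristic part through the linearized vorticity and entropy transport equations, Moser-type tame products, duality for existence) is essentially the content of Lemmas \ref{lem.tame1}--\ref{lem.tame4}, Proposition \ref{pro.tame} and Theorems \ref{thm:L1}--\ref{thm:L} of the paper — i.e.\ a proof sketch of Theorem \ref{thm:L}, not of Theorem \ref{thm:MT}. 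As an argument for Theorem \ref{thm:MT} it is circular, and nothing in it could be rearranged to fill the gap, because no amount of commuting tangential derivatives or energy estimates on transport equations produces the loss-of-one-derivative estimate itself.

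What is actually needed (and what the cited work \cite{MT08MR2441089}, following \cite{CS04MR2095445}, does) is a frequency-space analysis of the linearized problem: paralinearization of the variable coefficients, Laplace--Fourier transform in $(t,x_1)$, computation of the Lopatinski\u{\i} determinant associated with the boundary conditions \eqref{ELP.2}, verification that under \eqref{H1} this determinant has only \emph{simple} roots located in the hyperbolic region of the frequency space (the condition $2\bar v\neq\sqrt2(\bar c_++\bar c_-)$ precisely excludes a double root, and the strict inequality in \eqref{H2} rules out violent instability), and the construction of degenerate Kreiss-type symmetrizers near those roots together with standard symmetrizers elsewhere. Assembling the resulting microlocal estimates is what yields \eqref{E:MT}, with the weights $\gamma^{-3}$ and $\gamma^{-2}$ and the norms $L^2(H^1_\gamma)$, $H^1_\gamma$ on the right-hand side encoding the loss of one tangential derivative; the smallness $K\le K_0$ of the perturbation is used so that the roots of the perturbed Lopatinski\u{\i} determinant remain simple and close to those of the piecewise-constant problem. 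None of these ingredients appears in your proposal, so the key idea of the proof is missing.
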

In view of the result in \cite{C05MR2138641}, we only need to find for \eqref{ELP} a dual problem that obeys an appropriate energy estimate.
To this end, we introduce the following matrices:
\setlength{\arraycolsep}{2pt}
\begin{align}  \label{M1N1}
M_1 &:=\left.\begin{pmatrix}
0 & 0 & 0 & 0 &-{\ell}_1^l & 0 & 0 & 0\\
{\ell}_1^r& 0 & 0 & 0 & {\ell}_1^l & 0 & 0 & 0\\
0 & {\ell}_2^r & {\ell}_3^r & 0 & 0 & -{\ell}_2^l & -{\ell}_3^l & 0
\end{pmatrix}\right|_{x_2=0},\quad
N_1 :=\left.\begin{pmatrix}
0 & 0 & 0 & 0 & 0 & 0 & 0 & 0\\
0 & 0 & 0 & 0 & 0 & 0 & 0 & 0\\
0 &{\ell}_2^r & {\ell}_3^r & 0 & 0 &{\ell}_2^l & {\ell}_3^l & 0
\end{pmatrix}\right|_{x_2=0},
\end{align}
and
\begin{align*}
N:=
\begin{pmatrix}
0 & \partial_{x_1}\varphi & -1 & 0 & 0 & \partial_{x_1}\varphi & -1 & 0\\
0 & \partial_{x_1}\varphi & -1 & 0 & 0 & 0 & 0 & 0\\
1 & 0 & 0 & 0 & 1 & 0 & 0 & 0
\end{pmatrix} ,
\end{align*}
where
\begin{align*}
{\ell}_1^{r,l}:=-\frac{{\bm\gamma} p_{r,l}}{\partial_{x_2}\Phi_{r,l}},\quad
{\ell}_2^{r,l}:=-\frac{\partial_{x_1}\varphi}{2\rho_{r,l}\partial_{x_2}\Phi_{r,l}} ,\quad
{\ell}_3^{r,l}:=\frac{1}{2\rho_{r,l}\partial_{x_2}\Phi_{r,l}} .
\end{align*}
By virtue of \eqref{A2tilde2}, we compute that these matrices satisfy the following relation:
\begin{equation}\label{dual:id1}
\mathrm{diag}\,\big(\widetilde{A}_2({U}_{r},{\Phi}_{r}),
\,\widetilde{A}_2({U}_{l},{\Phi}_{l})\big)
\big|_{x_2=0}=M_1^{\mathsf{T}}\Wideubar{M}+N_1^{\mathsf{T}}N ,
\end{equation}
where matrix $\Wideubar{M}=\Wideubar{M}(t,x_1)$ was defined in \eqref{M.bar}.
Moreover, we infer from \eqref{bas.c2} that all matrices $M_1$,  $N$, $N_1$,
and $\Wideubar{M}$ belong to $W^{2,\infty}(\mathbb{R}^2)$.

Then we define the dual problem for \eqref{ELP}, posed on $\Omega$, as follows:
\begin{align}\label{dual}
\left\{\begin{aligned}
&\mathbb L^{\prime}_{e}(U_{r},\Phi_{r})^*U^{+}=\widetilde{f}_{+},\quad &x_2>0,\\
&\mathbb L^{\prime}_{e}(U_{l},\Phi_{l})^*U^{-}=\widetilde{f}_{-},\quad &x_2>0,\\
&N_1 U=0,\quad &x_2=0,\\
&\dive(\underline b^{\mathsf{T}}M_1  U)-\bm{b}_{\sharp}^{\mathsf{T}} M_1 U=0,\quad &x_2=0,
\end{aligned}\right.
\end{align}
where $\underline b$, $\bm{b}_{\sharp}$,  $M_1$, and  $N_1$ are defined
in \eqref{b.bar}  and \eqref{M1N1}, dual operators $\mathbb L^{\prime}_{e}(U_{r,l},\Phi_{r,l})^\ast$ are the formal adjoints of $\mathbb L^{\prime}_{e}(U_{r,l},\Phi_{r,l})$, and $\dive$ denotes the divergence operator in $\mathbb{R}^2$ with respect to $(t,x_1)$.
We refer to \cite[\S\,3.2]{M01MR1842775} for the derivation of the dual problem
by using identity \eqref{dual:id1} and integration by parts.

Since the rank of $N_1$ is one, the dual problem \eqref{dual} has exactly two independent scalar boundary conditions, which is compatible with the number of the {\it incoming characteristics}, i.e., negative eigenvalues of the boundary matrix for \eqref{dual}.

Assuming \eqref{bas.c1}--\eqref{bas.c2} hold for a sufficiently small $K$,
we can repeat the same analysis as in \cite{MT08MR2441089} to show that the dual problem \eqref{dual} satisfies the backward Lopatinski\u{\i} condition and that the roots of the associated Lopatinski\u{\i} determinant are simple and coincide with those for the original problem \eqref{ELP}.
Then we can  find the desired \emph{a priori} estimate with the loss of one tangential derivative for problem \eqref{dual}.
The effective linear problem \eqref{ELP} thus satisfies all the assumptions ({\it i.e.}, symmetrizability, regularity, and weak stability) listed in \cite{C05MR2138641}. We therefore obtain the following well-posedness result.
\begin{theorem}\label{thm:L1}
Let $T>0$. Assume that all the hypotheses of Theorem \ref{thm:MT} hold.
Then there exist positive constants $K_0>0$ and $\gamma_0\geq 1$, independent of $T$, such that, if $K\leq K_0$,
then, for source terms $f^{\pm}\in L^2(H^1(\omega_T))$ and $g\in H^1(\omega_T)$ that vanish for $t<0$, the problem{\rm :}
	\begin{align*}
	\left\{\begin{aligned}
	&\mathbb{L}'_e\big(U_{r},\Phi_{r}\big)\dot{V}^{+}=f^{+}\,,\quad &\mbox{for $t<T,\ x_2>0$},\\
	&\mathbb{L}'_e\big(U_{l},\Phi_{l}\big)\dot{V}^{-}=f^{-}\,, \quad &\mbox{for $t<T,\ x_2>0$},\\
	&\mathbb{B}'_e\big(U_{r,l}, \Phi_{r,l}\big)(\dot{V}, \psi)=g\qquad  &\mbox{for $t<T,\ x_2=0$},
	\end{aligned}\right.
	\end{align*}
	has a unique solution $(\dot{V}^\pm,\psi)\in L^2(\Omega_T)\times H^1(\omega_T)$
	that vanishes for $t<0$ and satisfies $\mathbb{P}({\varphi})\dot{V}^\pm|_{x_2=0}\in L^2(\omega_T)$.
	Moreover, the following estimate holds for all $\gamma\geq \gamma_0$ and for all $t\in[0,T]${\rm :}
	\begin{align}
	&\gamma\|\dot{V}\|_{L^2_{\gamma}(\Omega_t)}^2
	+\|\mathbb{P}({\varphi})\dot{V} |_{x_2=0}\|_{L^2_{\gamma}(\omega_t)}^2+\|\psi\|_{H^1_{\gamma}(\omega_t)}^2
 \lesssim \gamma^{-3}\|f\|_{L^2(H_{\gamma}^1(\omega_t))}^2+\gamma^{-2}\|g\|_{H^1_{\gamma}(\omega_t)}^2.   \label{E:L1}
	\end{align}
\end{theorem}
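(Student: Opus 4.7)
The plan is to apply the abstract $L^2$ well-posedness theorem of Coulombel \cite{C05MR2138641} for symmetrizable hyperbolic initial-boundary value problems with characteristic boundary that are only weakly stable in the Kreiss--Lopatinski\u{\i} sense. That framework requires four ingredients: (i) a Friedrichs symmetrizer together with appropriate regularity of the coefficients; (ii) constant rank of the boundary matrix and the correct count of scalar boundary conditions; (iii) a forward $L^2$-type a priori estimate for the direct problem, possibly with a fixed loss of tangential derivatives; and (iv) an analogous backward a priori estimate for a well-chosen dual problem. Once all four are in place, Coulombel's theorem produces a unique causal $L^2$ solution with $L^2$ trace of the noncharacteristic part and a causal energy estimate, yielding exactly \eqref{E:L1}.

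Three of these ingredients are available. For (i), the interior operators $\mathbb{L}'_e(U_{r,l},\Phi_{r,l})$ are symmetrized by the Friedrichs symmetrizer $\mathcal{s}_{r,l}=\mathrm{diag}(1/\rho_{r,l},\gamma p_{r,l},\gamma p_{r,l},\gamma p_{r,l})$, and the regularity assumptions \eqref{bas.c1}--\eqref{bas.c2} give the $W^{1,\infty}$ resp.\ $W^{2,\infty}$ control on $\mathcal{C}(U_{r,l},\Phi_{r,l})$ and on $L(U_{r,l},\Phi_{r,l})$ that is demanded in \cite{C05MR2138641}. For (ii), expression \eqref{A2tilde2} shows that the eigenvalues of the boundary matrix are the double eigenvalue $0$ and the simple eigenvalues $\pm c_{r,l}\langle\partial_{x_1}\Phi_{r,l}\rangle/\partial_{x_2}\Phi_{r,l}$, so that, thanks to \eqref{bas.eq.2}, the rank stays constant equal to $2$ on the whole half-space; this matches the three scalar boundary conditions \eqref{ELP.2} minus the one characteristic direction, in the right framework of \cite{C05MR2138641}. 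Ingredient (iii) is precisely Theorem \ref{thm:MT}, already proved in \cite{MT08MR2441089}.

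The main step, and the one that carries all the work, is (iv). Starting from the formal adjoint $\mathbb L^{\prime}_e(U_{r,l},\Phi_{r,l})^\ast$ and integrating by parts against a pair of test functions $(U^+,U^-)$, the algebraic identity \eqref{dual:id1}
\begin{equation*}
\mathrm{diag}\bigl(\widetilde A_2({U}_r,{\Phi}_r),\widetilde A_2({U}_l,{\Phi}_l)\bigr)\bigl|_{x_2=0}=M_1^{\mathsf{T}}\Wideubar{M}+N_1^{\mathsf{T}}N
\end{equation*}
together with the structure \eqref{B.prime} of $\mathbb B'_e$ naturally produces the two dual boundary conditions $N_1 U=0$ and $\mathrm{div}(\underline b^{\mathsf{T}} M_1 U)-\bm b_\sharp^{\mathsf{T}} M_1 U=0$ appearing in \eqref{dual}; this derivation is carried out in Métivier \cite[\S3.2]{M01MR1842775}. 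The rank of $N_1$ is one, so \eqref{dual} has exactly two independent scalar boundary conditions, which matches the number of negative eigenvalues of the reversed boundary matrix. The decisive analytical point is to show that the Lopatinski\u{\i} determinant of \eqref{dual} is non-vanishing in the stable region and that its roots on the boundary of that region are simple and coincide with those of the direct problem. This is a duality computation: because the Lopatinski\u{\i} determinant of the adjoint problem equals, up to a non-vanishing factor, the one of the direct problem evaluated on the dual stable subspace, the analysis carried out in \cite{MT08MR2441089} for the direct problem transfers verbatim to the dual one. The conditions \eqref{H1}, in particular the strict inequality and the exclusion \eqref{double}, guarantee the simple-root property that rules out an extra loss of derivatives, and the smallness of $K$ in \eqref{bas.c2} lets the constant-coefficient analysis around \eqref{CVS0} be perturbed into the variable-coefficient setting. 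A degenerate Kreiss symmetrizer can then be built for \eqref{dual} exactly as in \cite{MT08MR2441089}, yielding the dual analogue of \eqref{E:MT}.

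With ingredients (i)--(iv) in place, Coulombel's abstract theorem in \cite{C05MR2138641} applies directly to the effective linear problem \eqref{ELP} and delivers, for any source terms $f^\pm\in L^2(H^1(\omega_T))$, $g\in H^1(\omega_T)$ vanishing in the past, a unique solution $(\dot V^\pm,\psi)\in L^2(\Omega_T)\times H^1(\omega_T)$ that vanishes for $t<0$, with noncharacteristic trace $\mathbb P(\varphi)\dot V^\pm|_{x_2=0}\in L^2(\omega_T)$, and satisfying \eqref{E:L1}. The causal formulation on each $\Omega_t$, $t\in[0,T]$, follows automatically because both the direct and dual estimates are themselves causal: extending the data by zero on $(t,T)$ and invoking the finite propagation speed inherent in the symmetric hyperbolic structure localizes the estimate to $(-\infty,t]$. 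The constants $K_0$ and $\gamma_0$ are those produced in Theorem \ref{thm:MT} and its dual counterpart, both independent of $T$ by the time-translation invariance of the weighted norms.
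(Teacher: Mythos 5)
Your proposal follows essentially the same route as the paper: verify the hypotheses of Coulombel's $L^2$ well-posedness framework (Friedrichs symmetrizer and coefficient regularity, the direct weak-stability estimate of Theorem \ref{thm:MT}), construct the dual problem from identity \eqref{dual:id1} following M\'etivier, transfer the Lopatinski\u{\i} analysis of \cite{MT08MR2441089} to show the backward estimate with simple roots coinciding with those of the direct problem, and then invoke \cite{C05MR2138641} to obtain existence, uniqueness, the $L^2$ trace of $\mathbb{P}(\varphi)\dot V$, and estimate \eqref{E:L1}. This matches the paper's argument, including the causality/independence-of-$T$ considerations, so no gap to report.
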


Theorem \ref{thm:L1} shows the well-posedness of linearized problem \eqref{ELP} in $L^2$ when source terms $(f^\pm,g)$
belong to $L^2(H^1)\times H^1$. We now turn to the derivation of energy estimates for the higher-order derivatives of solutions.

\subsection{\emph{A priori} tame estimate}
In order to obtain the estimates for higher-order derivatives of solutions to \eqref{ELP},
it is convenient to deal with the reformulated problem \eqref{W.eq} and \eqref{W.bdy} for new unknowns $W$.
Until the end of this section, we always assume that $\gamma\geq \gamma_0$ and $K\leq K_0$,
where $\gamma_0$ and $K_0$ are given by Theorem \ref{thm:L1}.
Then estimate \eqref{E:L1} can be rewritten as
\begin{align}
& \sqrt{\gamma}\|W\|_{L^2_{\gamma}(\Omega_T)}+\|W^{\mathrm{nc}}\,\!|_{x_2=0}\|_{L^2_{\gamma}(\omega_T)}
+\|\psi\|_{H^1_{\gamma}(\omega_T)}\notag\\
&\,\, \lesssim \gamma^{-{3}/{2}}\|F\|_{L^2(H^1_{\gamma}(\omega_T))}
+\gamma^{-1}\|g\|_{H^1_{\gamma}(\omega_T)}. \label{E:L1a}
\end{align}

We first derive the estimate for tangential derivatives.
Let $k\in[1,s]$ be a fixed integer.
Applying the tangential derivative $\p_{\rm tan}^{\alpha}=\p_t^{\alpha_0}\p_{x_1}^{\alpha_1}$ with $|\alpha|=k$
to system \eqref{W.eq} yields the equations for $\p_{\rm tan}^{\alpha}W^{\pm}$ that involve the linear terms
of the derivatives, {\it i.e.}, $\p_{\rm tan}^{\alpha-\beta}\p_t W^{\pm}$ and $\p_{\rm tan}^{\alpha-\beta}\p_{x_1} W^{\pm}$, with $|\beta|=1$.
These terms cannot be treated simply as source terms, owing to the loss of derivatives in energy estimate \eqref{E:L1a}.
To overcome this difficulty, we adopt the idea of \cite{CS08MR2423311}, which is to deal with a boundary value problem for all the tangential derivatives of order equal to $k$, {\it i.e.}, for $W^{(k)}:=\{ \p_t^{\alpha_0}\p_{x_1}^{\alpha_1}W^{\pm},\, \alpha_{0}+\alpha_{1}=k\}$.
Such a problem satisfies the same regularity and stability properties as the original problem \eqref{W.eq} and \eqref{W.bdy}.
Repeating the derivation in \cite[\S\,4]{MT08MR2441089}, we get that $W^{(k)}$ satisfies an energy estimate similar to \eqref{E:L1a} with new source terms $\mathcal{F}^{(k)}$ and $\mathcal{G}^{(k)}$.
Then we can employ the Gagliardo--Nirenberg and Moser-type inequalities ({\it cf}.\;\cite[Theorems 8--10]{CS08MR2423311})
to derive the following estimate for tangential derivatives (see \cite[Proposition 1]{CS08MR2423311} for the detailed proof).
\begin{lemma}[Estimate of tangential derivatives]\label{lem.tame1}
	Assume that the hypotheses of Theorem {\rm \ref{thm:L}} hold.
	Then there exist constants $C_{s}>0$ and $\gamma_{s}\geq 1$,  independent of $T$, such that,
	for all $\gamma\geq \gamma_{s}$ and for all $(W^\pm,\psi)\in H^{s+2}_{\gamma}(\Omega_T)\times H^{s+2}_{\gamma}(\omega_T)$
	that are solutions of problem \eqref{W.eq} and \eqref{W.bdy}, the following estimate holds{\rm :}
	\begin{align}
	&\notag\sqrt{\gamma}\|W\|_{L^2(H^{s}_{\gamma}(\omega_T))}+\|W^{\mathrm{nc}}\,\!|_{x_2=0}\|_{H^{s}_{\gamma}(\omega_T)}
	+\|\psi\|_{H^{s+1}_{\gamma}(\omega_T)}\\
	& \leq C_{s}\big\{\frac{1}{\gamma}\|g\|_{H^{s+1}_{\gamma}(\omega_T)}
	+\frac{1}{\gamma^{{3}/{2}}}\big\|F\big\|_{L^2(H^{s+1}_{\gamma}(\omega_T))}
	+\frac{1}{\gamma^{{3}/{2}}}\|W\|_{W^{1,\infty}(\Omega_T)}\big\|\big(\dot{U}_{r,l},\nabla\dot{\Phi}_{r,l}\big)\big\|_{H^{s+2}_{\gamma}(\Omega_T)}\notag \\
	&\qquad\quad\, +\frac{1}{\gamma} \big(\|W^{\mathrm{nc}}\,\!|_{x_2=0}\|_{L^{\infty}(\omega_T)}
	+\|\psi\|_{W^{1,\infty}(\omega_T)}\big)\big\|\big(\dot{U}_{r,l},\p_{x_2}\dot{U}_{r,l},\nabla_x\dot{\Phi}_{r,l}\big)|_{x_2=0}\big\|_{{H^{s+1}_{\gamma}(\omega_T)}}\big\}.
\notag 
	\end{align}
\end{lemma}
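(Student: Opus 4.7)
The plan is to apply the basic $L^2$ estimate \eqref{E:L1a} not to the reformulated problem \eqref{W.eq}--\eqref{W.bdy} itself, but to the auxiliary problem satisfied by the vector $W^{(k)}:=\{\partial_t^{\alpha_0}\partial_{x_1}^{\alpha_1}W^{\pm}:\alpha_0+\alpha_1=k\}$ of all tangential derivatives of order exactly $k$, for each $k\in[1,s]$, and then to sum over $k$. The necessity of grouping together all $k$-th tangential derivatives (rather than treating one fixed multi-index at a time) comes from the loss of one tangential derivative in \eqref{E:L1a}: commutator terms of the form $\partial_{\mathrm{tan}}W$ would otherwise not be absorbable as source terms.

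First, I would apply $\partial_{\mathrm{tan}}^{\alpha}=\partial_t^{\alpha_0}\partial_{x_1}^{\alpha_1}$ with $|\alpha|=k$ to \eqref{W.eq} and to the boundary condition \eqref{W.bdy.1}. Using the Leibniz rule produces, in the interior, a symmetric hyperbolic system for $W^{(k)}$ of exactly the same block structure as the original (with the same matrices $\bm{A}_0^{r,l},\bm{A}_1^{r,l},\bm{I}_2,\bm{C}^{r,l}$), up to a new source term $\mathcal{F}^{(k)}$ collecting all the commutators between $\partial_{\mathrm{tan}}^{\alpha}$ and the coefficients; on the boundary one obtains, analogously, a condition of the form $\mathcal{B}(W^{(k)},\partial_{\mathrm{tan}}^{\alpha}\psi)=\mathcal{G}^{(k)}$, where $\mathcal{G}^{(k)}$ collects the commutators coming from $\underline{b}$, $\bm{b}_{\sharp}$, and $\bm{M}$. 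Because the principal parts are unchanged, the extended problem for $W^{(k)}$ inherits the symmetrizability of the interior operator (with the same Friedrichs-type symmetrizer diagonal-extended to the $W^{(k)}$-variable) and the backward Lopatinski\u{\i} property of \eqref{ELP}; the required $W^{2,\infty}$ regularity of the coefficients is guaranteed by \eqref{H.L.1}. Consequently, Theorem~\ref{thm:L1}, rephrased in the form \eqref{E:L1a}, applies verbatim to $W^{(k)}$ and yields
\begin{align*}
&\sqrt{\gamma}\,\|W^{(k)}\|_{L^2_\gamma(\Omega_T)}+\|(W^{(k)})^{\mathrm{nc}}|_{x_2=0}\|_{L^2_\gamma(\omega_T)}+\|\partial_{\mathrm{tan}}^{\alpha}\psi\|_{H^1_\gamma(\omega_T)}\\
&\qquad\lesssim \gamma^{-3/2}\|\mathcal{F}^{(k)}\|_{L^2(H^1_\gamma(\omega_T))}+\gamma^{-1}\|\mathcal{G}^{(k)}\|_{H^1_\gamma(\omega_T)}.
\end{align*}

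The heart of the argument is then to produce a tame bound for $\mathcal{F}^{(k)}$ and $\mathcal{G}^{(k)}$. A generic contribution to $\mathcal{F}^{(k)}$ is a sum of products of the form $\partial_{\mathrm{tan}}^{\beta}(\text{coeff})\cdot\partial_{\mathrm{tan}}^{\alpha-\beta}W$, where the coefficient is a smooth function of $(\dot U_{r,l},\nabla\dot\Phi_{r,l})$; in $L^2(H^1_\gamma(\omega_T))$ one has an analogous expansion with one extra tangential derivative. I would apply the Moser and Gagliardo--Nirenberg inequalities of the Coulombel--Secchi type (\cite[Thms.\ 8--10]{CS08MR2423311}) to split each such product as the sum of a term linear in $\|W\|_{W^{1,\infty}(\Omega_T)}$ times $\|(\dot U_{r,l},\nabla\dot\Phi_{r,l})\|_{H^{s+2}_\gamma(\Omega_T)}$ and a term linear in the high tangential norm of $W$ times the $L^\infty$ norm of the coefficient, the latter being bounded by $K$ and thus absorbable after choosing $\gamma$ sufficiently large. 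The commutators forming $\mathcal{G}^{(k)}$ are treated in the same way using the corresponding boundary inequalities, with $\|W\|_{W^{1,\infty}}$ replaced by $\|W^{\mathrm{nc}}|_{x_2=0}\|_{L^\infty(\omega_T)}+\|\psi\|_{W^{1,\infty}(\omega_T)}$ and with the trace norm $\|(\dot U_{r,l},\partial_{x_2}\dot U_{r,l},\nabla_x\dot\Phi_{r,l})|_{x_2=0}\|_{H^{s+1}_\gamma(\omega_T)}$ appearing as the high Sobolev factor; this is precisely why $\partial_{x_2}\dot U_{r,l}|_{x_2=0}$ enters the statement.

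Finally, summing the resulting inequalities over $0\le k\le s$ (the case $k=0$ being \eqref{E:L1a}) and using the equivalence \eqref{norm_sim} to reconstruct $L^2(H^s_\gamma(\omega_T))$, $H^s_\gamma(\omega_T)$, and $H^{s+1}_\gamma(\omega_T)$ norms from the weighted $\gamma$-sums, one absorbs the non-tame terms by taking $\gamma\ge\gamma_s$ large and obtains the stated inequality. The main obstacle is the Moser split in the previous step: one must be careful to pair each high-Sobolev-norm factor with a low-$L^\infty$-type factor so that the dependence on $(\dot U_{r,l},\dot\Phi_{r,l})$ is linear in the highest norm (giving the tame form), while preserving the $\gamma$-weights carefully so that the factors of $\gamma^{-3/2}$ and $\gamma^{-1}$ on the right-hand side are not lost; a naive application of the chain rule would instead yield nonlinear products in $\|(\dot U_{r,l},\dot\Phi_{r,l})\|_{H^{s+2}_\gamma}$ and destroy the tame structure required at the Nash--Moser step.
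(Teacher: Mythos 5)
Your proposal follows essentially the same route as the paper: form the enlarged problem for $W^{(k)}$ (all tangential derivatives of order $k$) to cope with the loss of one derivative in \eqref{E:L1a}, note that it keeps the same principal parts and hence the same regularity/stability properties (the paper phrases this as repeating the derivation of \cite[\S\,4]{MT08MR2441089} rather than applying Theorem \ref{thm:L1} ``verbatim'', since the enlarged system carries new zero-order coupling terms, but your justification via the unchanged principal symbol is the intended one), then estimate the commutator sources $\mathcal{F}^{(k)},\mathcal{G}^{(k)}$ by the Gagliardo--Nirenberg/Moser tame splits of \cite[Theorems 8--10, Proposition 1]{CS08MR2423311} and sum over $k$. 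This matches the paper's argument, including the identification of why the trace norm of $(\dot U_{r,l},\partial_{x_2}\dot U_{r,l},\nabla\dot\Phi_{r,l})|_{x_2=0}$ and the interior norm $\|(\dot U_{r,l},\nabla\dot\Phi_{r,l})\|_{H^{s+2}_{\gamma}(\Omega_T)}$ appear.
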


Since the boundary matrix for our problem \eqref{W.eq} and \eqref{W.bdy} is singular, there is no hope to estimate all the normal derivatives of $W$ directly from equations \eqref{W.eq} by applying the standard approach for noncharacteristic boundary problems as in \cite{RM74MR0340832,
	MT13MR3085779}.
However, for our problem \eqref{ELP}, we can obtain the estimate of missing normal derivatives through the {equations of the ``linearized vorticity'' and entropy, where the linearized vorticity} has been introduced in \cite{CS08MR2423311} and defined as:
\begin{equation}\label{xi.def}
\dot{\xi}^\pm:=\partial_{x_1}\dot{u}^\pm-\frac1{\partial_{x_2}\Phi_{r,l}}\left(\partial_{x_1}\Phi_{r,l}\partial_{x_2}\dot{u}^\pm+\partial_{x_2}\dot{v}^\pm\right),
\end{equation}
where  $\dot u^\pm$ and $\dot v^\pm$ are the second and third components of good unknowns $\dot{V}^\pm$ (\emph{cf.}\;\eqref{Alinhac}), respectively.

Plugging \eqref{xi.def} into \eqref{ELP.1}--\eqref{ELP.1.1}, we can compute
\begin{equation}\label{xi.eq}
\left(\partial_t+v_{r,l}\partial_{x_1}\right)\dot\xi^\pm= \left(\partial_{x_1}-\frac{\partial_{x_1}\Phi_{r,l}}{\partial_{x_2}\Phi_{r,l}}\partial_{x_2}\right)
\mathcal{F}_2^\pm
-\frac {\partial_{x_2}\mathcal{F}_1^\pm}{\partial_{x_2}\Phi_{r,l}}+\Lambda_1^{r,l}\partial_{x_1}\dot V^\pm+\Lambda^{r,l}_2\partial_{x_2}\dot V^\pm,
\end{equation}
where $\mathcal{F}_1^{\pm}$ and $\mathcal{F}_2^{\pm}$ are given by
\begin{align} \notag
\mathcal{F}_1^{\pm}:=\big(f^{\pm}-\mathcal{C}({U}_{r,l}, \Phi_{r,l})\dot{V}^{\pm}\big)_2,
\qquad \mathcal{F}_2^{\pm}:=\big(f^{\pm}-\mathcal{C}({U}_{r,l}, \Phi_{r,l})\dot{V}^{\pm}\big)_3,
\end{align}
and vectors $\Lambda^{r,l}_{1,2}$ are $C^{\infty}$--functions of $(\dot{U}_{r,l},\nabla\dot{U}_{r,l},\nabla\dot\Phi_{r,l}, \nabla^2\dot\Phi_{r,l})$ vanishing at the origin.

Applying a standard energy method to the transport equations \eqref{xi.eq},
we can utilize the Gagliardo--Nirenberg and Moser-type inequalities to derive the following estimate of $\dot{\xi}^{\pm}$:
\begin{lemma}[Estimate of vorticity]\label{lem.tame2}
	Assume that the hypotheses of Theorem {\rm \ref{thm:L}} hold.
	Then there exist constants $C_{s}>0$ and $\gamma_{s}\geq 1$, independent of $T$, such that,
	for all $\gamma\geq \gamma_{s}$ and for all $(W^\pm,\psi)\in H^{s+2}_{\gamma}(\Omega_T)\times H^{s+2}_{\gamma}(\omega_T)$
	that are solutions of problem \eqref{W.eq} and \eqref{W.bdy},
	functions $\dot{\xi}^{\pm}$ defined by \eqref{xi.def}
	satisfy the following estimate:
	\begin{align}
	\notag \gamma\big\|\dot{\xi}^{\pm}\big\|_{H^{s-1}_{\gamma}(\Omega_T)}
	\leq  C_{s}\big\{& \big\|f^{\pm}\big\|_{H^{s}_{\gamma}(\Omega_T)}
	+\big\|f^{\pm}\big\|_{L^{\infty}(\Omega_T)}\big\|\nabla\dot\Phi_{r,l}\big\|_{H^{s}_{\gamma}(\Omega_T)}
	+\big\|\dot{V}^{\pm}\big\|_{H^{s}_{\gamma}(\Omega_T)}
	\\ &+\big\|\dot{V}^{\pm}\big\|_{W^{1,\infty}(\Omega_T)}\big(\big\|\dot U_{r,l}\big\|_{H^{s+1}_{\gamma}(\Omega_T)}
	+\big\|\nabla\dot{\Phi}_{r,l}\big\|_{H^{s}_{\gamma}(\Omega_T)}\big)\big\}.
	\notag 
	\end{align}
\end{lemma}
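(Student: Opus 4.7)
My plan is to carry out a weighted $L^2$ energy estimate on the transport equation \eqref{xi.eq}, differentiated $|\alpha|$ times in $(t,x)$ for every $|\alpha|\le s-1$. Set $D_{r,l}:=\partial_t+v_{r,l}\partial_{x_1}$. Since $D_{r,l}$ carries no normal derivative $\partial_{x_2}$, integration by parts in $\Omega_T$ produces no boundary term on $\{x_2=0\}$; the vanishing of $\dot V^\pm$ and $f^\pm$ in the past transfers to $\dot\xi^\pm$ through \eqref{xi.def}, so no initial contribution arises either. For $\alpha\in\mathbb N^3$ with $|\alpha|\le s-1$, applying $\partial^\alpha$ to \eqref{xi.eq}, multiplying by $\mathrm{e}^{-2\gamma t}\partial^\alpha\dot\xi^\pm$ and integrating over $\Omega_T$ produces on the left
\[
\int_{\Omega_T}\mathrm{e}^{-2\gamma t}(2\gamma-\partial_{x_1}v_{r,l})(\partial^\alpha\dot\xi^\pm)^2\d x\d t\gtrsim \gamma\|\partial^\alpha\dot\xi^\pm\|_{L^2_\gamma(\Omega_T)}^2,
\]
provided $\gamma\ge\gamma_s$ is chosen large compared to $K\ge\|\dot U_{r,l}\|_{W^{1,\infty}(\Omega_T)}$ from \eqref{bas.c2}.

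The right-hand side of the differentiated equation splits into (i) $\partial^\alpha$ of $(\partial_{x_1}-\tfrac{\partial_{x_1}\Phi_{r,l}}{\partial_{x_2}\Phi_{r,l}}\partial_{x_2})\mathcal{F}_2^\pm$ and of $\partial_{x_2}\mathcal{F}_1^\pm/\partial_{x_2}\Phi_{r,l}$, (ii) $\partial^\alpha(\Lambda_j^{r,l}\partial_{x_j}\dot V^\pm)$ for $j=1,2$, and (iii) the commutator $[\partial^\alpha,v_{r,l}\partial_{x_1}]\dot\xi^\pm$. I would bound each of these in $L^2_\gamma(\Omega_T)$, pair it with $\partial^\alpha\dot\xi^\pm$ by Cauchy--Schwarz, and absorb the resulting factor $\|\partial^\alpha\dot\xi^\pm\|_{L^2_\gamma}$ into the left via Young's inequality and the $\gamma$-factor. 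The nonlinear products in (i)--(iii) are controlled by the Moser/Gagliardo--Nirenberg estimates of \cite{CS08MR2423311}: for a smooth function $G$ vanishing at the origin,
\[
\|G(\dot U_{r,l},\nabla\dot U_{r,l},\nabla\dot\Phi_{r,l},\nabla^2\dot\Phi_{r,l})w\|_{H^{s-1}_\gamma(\Omega_T)}\lesssim\|w\|_{H^{s-1}_\gamma(\Omega_T)}+\|w\|_{L^\infty(\Omega_T)}\|(\dot U_{r,l},\nabla\dot\Phi_{r,l})\|_{H^{s+1}_\gamma(\Omega_T)},
\]
with implicit constant depending only on $K$ through \eqref{bas.c2}, while $1/\partial_{x_2}\Phi_{r,l}$ is uniformly Lipschitz by \eqref{bas.eq.2}. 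Applied with $w=\partial_{x_j}\dot V^\pm$, this yields both $\|\dot V^\pm\|_{H^s_\gamma(\Omega_T)}$ and the tame term $\|\dot V^\pm\|_{W^{1,\infty}(\Omega_T)}(\|\dot U_{r,l}\|_{H^{s+1}_\gamma(\Omega_T)}+\|\nabla\dot\Phi_{r,l}\|_{H^s_\gamma(\Omega_T)})$; applied to the coefficient $1/\partial_{x_2}\Phi_{r,l}$ multiplying $\partial_{x_2}f^\pm$ it yields $\|f^\pm\|_{H^s_\gamma(\Omega_T)}$ together with $\|f^\pm\|_{L^\infty(\Omega_T)}\|\nabla\dot\Phi_{r,l}\|_{H^s_\gamma(\Omega_T)}$. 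Summing over $|\alpha|\le s-1$ and dividing by $\sqrt\gamma\,\|\dot\xi^\pm\|_{H^{s-1}_\gamma(\Omega_T)}$ then produces the announced bound.

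The main obstacle is the \emph{tame} bookkeeping: at every Leibniz step precisely one factor may carry a high-order Sobolev norm of the basic state, while every other factor must remain in $L^\infty$ or $W^{1,\infty}$, so that no product of high-order Sobolev norms appears on the right. This is exactly what the Moser-type composition and product inequalities are designed to guarantee, but one must verify term by term that no dangerous product is created, relying on $s\ge 3$ (hence the Sobolev embedding providing $L^\infty$ control of factors at moderate regularity) and on the uniform bound \eqref{H.L.1} on the basic state.
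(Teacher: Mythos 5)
Your proposal is correct and takes essentially the same route as the paper, which proves Lemma \ref{lem.tame2} precisely by a standard weighted energy argument for the transport equation \eqref{xi.eq} combined with the Gagliardo--Nirenberg and Moser-type inequalities of \cite{CS08MR2423311}. One small caution: to end up with $\|f^{\pm}\|_{L^{\infty}(\Omega_T)}$ rather than $\|\nabla f^{\pm}\|_{L^{\infty}(\Omega_T)}$ in the tame term, commute the derivative off $\mathcal{F}^{\pm}_{1,2}$ first (write $a\,\partial u=\partial(au)-(\partial a)u$ and use $\dot\Phi_{r,l}\in W^{3,\infty}$), since applying your stated Moser inequality directly with $w=\partial_{x_2}f^{\pm}$ would produce the wrong $L^\infty$ factor.
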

We turn to the estimate for normal derivatives $\p_{x_2}(W^{\pm}_1,W^{\pm}_2,W^{\pm}_3)$ by expressing them in terms of tangential derivatives $\p_t W^{\pm}$, $\p_{x_1} W^{\pm}$, and vorticity $\dot{\xi}^{\pm}$.
Since $\bm{I}_2=\mathrm{diag}\,(0,1,1,0)$, the normal derivatives for the noncharacteristic components $W^{\rm nc\,\pm}=(W_2^\pm,W_3^\pm)^{\mathsf{T}}$ are recovered directly from \eqref{W.eq} as:
\begin{equation}\label{dx2W2W3}
{\bm{I}}_2\partial_{x_2}W^{\pm}=F^\pm-{\bm{A}}_0^{r,l}\partial_t W^\pm-{\bm{A}}_1^{r,l}\partial_{x_1}W^\pm-{\bm{C}}^{r,l} W^\pm\,.
\end{equation}
The ``missing'' normal derivatives $\p_{x_2} W^{\pm}_1$ can be expressed by $\dot{\xi}^{\pm}$ and equations \eqref{W.eq}.
From transformation \eqref{W.def} and definition \eqref{T}, we have
\begin{align*}
&\partial_{x_2}\dot v^\pm=\partial_{x_2}W^\pm_1-\frac{c_{r,l}}{{\bm\gamma} p_{r,l}}\partial_{x_1}\Phi_{r,l}\left(\partial_{x_2}W^\pm_2-\partial_{x_2}W^\pm_3\right)+\big((\partial_{x_2}T_{r,l})W^\pm\big)_2 ,\\
&\partial_{x_2}\dot u^\pm=\partial_{x_1}\Phi_{r,l}\partial_{x_2}W^\pm_1+\frac{c_{r,l}}{{\bm\gamma} p_{r,l}}\left(\partial_{x_2}W^\pm_2-\partial_{x_2}W^\pm_3\right)+\big((\partial_{x_2}T_{r,l})W^\pm\big)_3 .
\end{align*}
In view of definition \eqref{xi.def}, we obtain
\begin{align}\label{dx2W1}
\langle\partial_{x_1}\Phi_{r,l}\rangle^2\partial_{x_2}W^\pm_1 = \partial_{x_2}\Phi_{r,l}(\partial_{x_1}\dot u^\pm-\dot\xi^\pm)-\partial_{x_1}\Phi_{r,l}\big((\partial_{x_2}T_{r,l})W^\pm\big)_3-\big((\partial_{x_2}T_{r,l})W^\pm\big)_2 .
\end{align}
Gathering \eqref{dx2W2W3} and \eqref{dx2W1} yields
\begin{align}\label{dx2W1-3}
\partial_{x_2}\begin{pmatrix} W^{\pm}_1\\ W^\pm_2\\ W^\pm_3\end{pmatrix}
=\begin{pmatrix} 0\\F_2^{\pm}\\ F_3^{\pm}\end{pmatrix}-\widetilde{{\bm{A}}}_0^{r,l}\partial_t W^\pm-\widetilde{{\bm{A}}}_1^{r,l}\partial_{x_1}W^\pm-\widetilde{{\bm{C}}}^{r,l} W^\pm-\frac{\partial_{x_2}\Phi_{r,l}}{\langle\partial_{x_1}\Phi_{r,l}\rangle^2}\begin{pmatrix}\dot\xi^\pm\\ 0\\ 0\end{pmatrix} ,
\end{align}
with new matrices $\widetilde{\bm{A}}_{0,1}^{r,l}$ and $\widetilde{\bm{C}}^{r,l}$, where
$\widetilde{\bm{A}}_{0,1}^{r,l}$ are $C^\infty$--functions of $(\dot{U}_{r,l},\nabla\dot{\Phi}_{r,l})$ and $\widetilde{\bm{C}}^{r,l}$ are $C^\infty$--functions of $(\dot{U}_{r,l},\nabla\dot{U}_{r,l},\nabla\dot{\Phi}_{r,l},\nabla^2\dot\Phi_{r,l})$ that
vanish at the origin.

From \eqref{dx2W1-3}, by an induction argument similar to Coulombel--Secchi \cite[Proposition 3]{CS08MR2423311}, we can get an estimate of the $L^2_\gamma(\Omega_T)$--norm of all normal derivatives up to order $s$ of $(W^\pm_1,W^\pm_2,W^\pm_3)$. More precisely, we have the following lemma.
\begin{lemma}\label{lem.tame3}
	Assume that the hypotheses of Theorem {\rm \ref{thm:L}} hold.
	Then there exist constants $C_{s}>0$ and $\gamma_{s}\geq 1$,
	which are independent of $T$, such that,
	for all $\gamma\geq \gamma_{s}$ and solutions $(W^\pm,\psi)\in H^{s+2}_{\gamma}(\Omega_T)\times H^{s+2}_{\gamma}(\omega_T)$
	of problem \eqref{W.eq} and \eqref{W.bdy},
	the following estimate holds for all integer $k\in[1,s]${\rm :}
	\begin{align}
	&\big\|\p_{x_2}^kW^{\pm}\big\|_{L^2(H^{s-k}_{\gamma}(\omega_T))}\leq  C_{s}\Big\{ \big\|\big(F^{\pm},W^{\pm},\dot{\xi}^{\pm}\big)\big\|_{H^{s-1}_{\gamma}(\Omega_T)}
	+\big\|W^{\pm}\big\|_{L^2(H^{s}_{\gamma}(\omega_T))}
	\notag \\
	&\qquad +\big\|\dot{\xi}^{\pm}\big\|_{L^{\infty}(\Omega_T)}\big\|\nabla\dot\Phi_{r,l} \big\|_{H^{s-1}_{\gamma}(\Omega_T)} +\big\|W^{\pm}\big\|_{L^{\infty}(\Omega_T)}\big\|\big(\dot U_{r,l},\nabla\dot\Phi_{r,l}\big) \big\|_{H^{s}_{\gamma}(\Omega_T)}\Big\}.
	\notag 
	\end{align}
\end{lemma}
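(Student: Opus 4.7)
The plan is to argue by induction on $k\in[1,s]$, using the identity \eqref{dx2W1-3} to trade one $x_2$-derivative for a tangential derivative plus vorticity and basic-state contributions, in the spirit of \cite[Proposition 3]{CS08MR2423311}. At each step the tangential-derivative estimate of Lemma \ref{lem.tame1} and the vorticity estimate of Lemma \ref{lem.tame2} will be invoked, and all products of basic-state coefficients with derivatives of $W^\pm$ and $\dot\xi^\pm$ will be controlled by Moser--Nirenberg tame product inequalities.

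For the base case $k=1$, I would take the $L^2(H^{s-1}_{\gamma}(\omega_T))$ norm of \eqref{dx2W1-3}. The source $F^\pm$ feeds directly into $\|F^\pm\|_{H^{s-1}_{\gamma}(\Omega_T)}$; the tangential-derivative contribution $\widetilde{\bm{A}}_0^{r,l}\partial_t W^\pm+\widetilde{\bm{A}}_1^{r,l}\partial_{x_1} W^\pm$ is absorbed into $\|W^\pm\|_{L^2(H^s_{\gamma}(\omega_T))}$, which is in turn controlled by Lemma \ref{lem.tame1}; and the vorticity contribution $\partial_{x_2}\Phi_{r,l}\langle\partial_{x_1}\Phi_{r,l}\rangle^{-2}\dot\xi^\pm$ is controlled by Lemma \ref{lem.tame2}. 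Since the coefficients $\widetilde{\bm{A}}_j^{r,l}$ and $\widetilde{\bm{C}}^{r,l}$ are smooth functions of $(\dot U_{r,l},\nabla\dot U_{r,l},\nabla\dot\Phi_{r,l},\nabla^2\dot\Phi_{r,l})$ that vanish at the origin, a tame Moser estimate splits each product into an $L^\infty$ factor times a Sobolev factor, producing precisely the two mixed right-hand side terms $\|\dot\xi^\pm\|_{L^\infty}\|\nabla\dot\Phi_{r,l}\|_{H^{s-1}_{\gamma}}$ and $\|W^\pm\|_{L^\infty}\|(\dot U_{r,l},\nabla\dot\Phi_{r,l})\|_{H^s_{\gamma}}$.

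For the inductive step $k\to k+1$, I would apply $\partial_{x_2}$ to \eqref{dx2W1-3} and take the $L^2(H^{s-k-1}_{\gamma}(\omega_T))$ norm. The worst resulting term is $\widetilde{\bm{A}}^{r,l}\partial_t\partial_{x_2}^{k}W^\pm$, whose norm is bounded by $\|\partial_{x_2}^{k}W^\pm\|_{L^2(H^{s-k}_{\gamma}(\omega_T))}$, which is exactly the inductive hypothesis; the remaining Leibniz terms, of the form $(\partial_{x_2}^j\widetilde{\bm{A}})(\partial_{x_2}^{k-j}\partial_t W^\pm)$ with $j\geq 1$ and their analogues for $\widetilde{\bm{C}}^{r,l}W^\pm$ and the $\dot\xi^\pm$ term, are handled by the same tame splitting as in the base case. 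Throughout the induction the total derivative order on the right-hand side remains $s-1$, which is what prevents the structure of the claimed estimate from deteriorating.

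The main obstacle is that \eqref{dx2W1-3} only delivers the normal derivatives of the first three components $(W_1^\pm,W_2^\pm,W_3^\pm)$, because the last row of $\widetilde A_2(U_{r,l},\Phi_{r,l})$ in \eqref{A2tilde2} vanishes: the entropy equation, combined with the eikonal constraint \eqref{bas.eq.1}, reduces to a purely tangential transport equation of the form $(\partial_t+v_{r,l}\partial_{x_1})W_4^\pm=$ (coupling with the other components and $f^\pm$). To close the bound for $W_4^\pm$ I would differentiate this transport equation in $x_2$ and run a weighted $L^2$ energy estimate analogous to Lemma \ref{lem.tame2}, using \eqref{H.L.1} to bound the basic-state coefficients; the resulting estimate fits into the same right-hand side structure and can be interleaved with the induction for the first three components. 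Beyond this entropy detour, the remaining effort is purely Moser bookkeeping ensuring no inductive step raises the total Sobolev order on the right-hand side above $s-1$.
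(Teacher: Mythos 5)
Your core argument is the paper's own proof: the authors establish this lemma exactly by the induction on $k$ from \cite[Proposition 3]{CS08MR2423311} applied to \eqref{dx2W1-3}, trading one normal derivative for tangential derivatives of $W^\pm$, the vorticity $\dot\xi^\pm$ and zeroth-order terms, and using Moser-type splittings to generate the two mixed products $\|\dot\xi^\pm\|_{L^\infty}\|\nabla\dot\Phi_{r,l}\|_{H^{s-1}_\gamma}$ and $\|W^\pm\|_{L^\infty}\|(\dot U_{r,l},\nabla\dot\Phi_{r,l})\|_{H^{s}_\gamma}$; your base case and the key observation in the inductive step (that $\widetilde{\bm{A}}_0^{r,l}\partial_t\partial_{x_2}^{k}W^\pm$ is controlled by the inductive hypothesis while all Leibniz remainders have total order at most $s-1$) match the intended bookkeeping.

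Where you deviate is the entropy component, and there the proposal has a genuine flaw. First, the ``entropy detour'' is not needed to close this induction: in the three rows of \eqref{dx2W1-3} the coupling to $W_4^\pm$ enters only through the zeroth-order matrix $\widetilde{\bm{C}}^{r,l}$, because the fourth column of ${\bm{A}}_1^{r,l}={\bm{A}}_0^{r,l}T_{r,l}^{-1}A_1(U_{r,l})T_{r,l}$ equals $(0,0,0,v_{r,l})^{\mathsf{T}}$ (the last row and column of $T_{r,l}$ in \eqref{T} are $(0,0,0,1)$ and ${\bm{A}}_0^{r,l}$ is diagonal) and $\dot u^\pm=(T_{r,l}W^\pm)_3$ contains no $W_4^\pm$; hence only norms of $W_4^\pm$ of total order at most $s-1$ appear, already absorbed in $\|W^\pm\|_{H^{s-1}_\gamma(\Omega_T)}$. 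Second, and more seriously, your claim that an $x_2$-differentiated energy estimate for the transport equation of $W_4^\pm$ ``fits into the same right-hand side structure'' is not correct: estimating $\partial_{x_2}^kW_4^\pm$ at tangential order $s-k$ forces $s$ full derivatives onto the source $f_4^\pm$ and onto the zeroth-order coupling $(\mathcal{C}(U_{r,l},\Phi_{r,l})\dot V^\pm)_4$, and neither $\|F^\pm\|_{H^{s-1}_\gamma(\Omega_T)}$ nor $\|W^\pm\|_{H^{s-1}_\gamma(\Omega_T)}+\|W^\pm\|_{L^2(H^{s}_\gamma(\omega_T))}$ dominates such quantities (the weighted inequality $\gamma\|u\|_{H^{s-1}_\gamma}\leq\|u\|_{H^{s}_\gamma}$ goes the wrong way); moreover the coupling term produces $\|\dot V\|_{H^{s}_\gamma(\Omega_T)}$, the very full-order quantity under estimation, which can only be absorbed later through the $\gamma$-weight. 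This is precisely why the paper does \emph{not} fold the entropy into the present lemma: it proves the separate Lemma \ref{lem.tame4}, with $\gamma\|W_4^\pm\|_{H^{s}_\gamma(\Omega_T)}$ on the left and $\|f_4^\pm\|_{H^{s}_\gamma(\Omega_T)}$, $\|\dot V\|_{H^{s}_\gamma(\Omega_T)}$ and the $H^{s+1}_\gamma$-norm of the coefficients on the right, and merges the two only in Proposition \ref{pro.tame}. So either read the present lemma, as the surrounding text does, as concerning the components recovered from \eqref{dx2W1-3}, or accept the weaker right-hand side of Lemma \ref{lem.tame4} for the $W_4^\pm$ part; interleaving it with the stated right-hand side does not work.
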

It remains to obtain an estimate for the normal derivatives of the last components $W^\pm_4$ of $W^\pm$.
To this end, it is sufficient to notice from \eqref{T} that $W^\pm_4=\dot s^\pm$ (namely the last components of $W^\pm$ coincides with those of original unknowns $\dot V^\pm$). According to the equations for $\dot s^\pm$ in \eqref{ELP.1}--\eqref{ELP.1.1}, we have
\[
\partial_t W_4^\pm+v_{r,l}\partial_{x_1}W_4^\pm+(\mathcal{C}_{r,l}\dot V^\pm)_4=f^\pm_4 ,
\]
which is a transport equation. Arguing on it by the standard energy method leads to the following lemma.
\begin{lemma}[Estimate of entropy]\label{lem.tame4}
	Assume that the hypotheses of Theorem {\rm \ref{thm:L}} hold.
	Then there exist constants $C_{s}>0$ and $\gamma_{s}\geq 1$,
	which are independent of $T$, such that,
	for all $\gamma\geq \gamma_{s}$ and solutions $(W^\pm,\psi)\in H^{s+2}_{\gamma}(\Omega_T)\times H^{s+2}_{\gamma}(\omega_T)$
	of problem \eqref{W.eq} and \eqref{W.bdy},
	the following estimate holds:
	\begin{align}\notag 
	\gamma\Vert W_4^\pm\Vert_{H^s_\gamma(\Omega_T)}\leq C_s \left\{\Vert f^\pm_4\Vert_{H^s_\gamma(\Omega_T)}+\Vert\dot V\Vert_{H^s_\gamma(\Omega_T)}+\Vert (\dot U_{r,l},\nabla\dot\Phi_{r,l})\Vert_{H^{s+1}_\gamma(\Omega_T)}\Vert\dot V\Vert_{L^\infty(\Omega_T)}\right\}.
	\end{align}
\end{lemma}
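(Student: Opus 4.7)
The plan is to apply the standard weighted energy method to the transport equation
$$(\partial_t+v_{r,l}\partial_{x_1})W_4^\pm=f^\pm_4-(\mathcal{C}_{r,l}\dot V^\pm)_4$$
after differentiating to all orders $|\beta|\le s$, while using tame Moser-type estimates to exhibit the stated right-hand side.

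First I would apply a general mixed derivative $\partial^\beta=\partial_t^{\beta_0}\partial_{x_1}^{\beta_1}\partial_{x_2}^{\beta_2}$ with $|\beta|\le s$ to the equation, producing
$$(\partial_t+v_{r,l}\partial_{x_1})\partial^\beta W_4^\pm=\partial^\beta f_4^\pm-\partial^\beta(\mathcal{C}_{r,l}\dot V^\pm)_4-[\partial^\beta,v_{r,l}]\partial_{x_1}W_4^\pm.$$
Crucially, because the principal part contains no $\partial_{x_2}$, no boundary term on $\{x_2=0\}$ is generated when $\partial_{x_2}$ falls on $W_4^\pm$, and no boundary condition on $W_4^\pm$ is required for the energy estimate.

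Next I would pair the above identity with $\mathrm{e}^{-2\gamma t}\partial^\beta W_4^\pm$ in $L^2(\Omega_T)$. Integration by parts in $t$ yields $\gamma\|\partial^\beta W_4^\pm\|_{L^2_\gamma(\Omega_T)}^2$ plus a nonnegative trace at $t=T$ (the trace at $t=-\infty$ vanishes by the past-vanishing of the solution); integration by parts in $x_1$ on the convective term produces only an $O(\|\partial_{x_1}v_{r,l}\|_{L^\infty})\|\partial^\beta W_4^\pm\|_{L^2_\gamma(\Omega_T)}^2$ remainder, which by \eqref{bas.c2} is absorbed into the $\gamma$-term as soon as $\gamma\ge\gamma_s$ is large enough. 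This gives, for each $\beta$,
$$\gamma\|\partial^\beta W_4^\pm\|_{L^2_\gamma(\Omega_T)}\lesssim \|\partial^\beta f_4^\pm\|_{L^2_\gamma(\Omega_T)}+\|\partial^\beta(\mathcal{C}_{r,l}\dot V^\pm)_4\|_{L^2_\gamma(\Omega_T)}+\|[\partial^\beta,v_{r,l}]\partial_{x_1}W_4^\pm\|_{L^2_\gamma(\Omega_T)}.$$

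Finally I would apply the tame Moser product and composition estimates (already invoked in the proof of Lemma \ref{lem.tame1}) to the last two terms. Since $\mathcal{C}_{r,l}$ is a smooth function of $(\dot U_{r,l},\nabla\dot U_{r,l},\nabla\dot\Phi_{r,l})$ vanishing at the origin, and $v_{r,l}$ is a component of $\dot U_{r,l}$, both the product and commutator contributions at order $|\beta|\le s$ are controlled by
$$\|\dot V\|_{H^s_\gamma(\Omega_T)}+\|(\dot U_{r,l},\nabla\dot\Phi_{r,l})\|_{H^{s+1}_\gamma(\Omega_T)}\|\dot V\|_{L^\infty(\Omega_T)},$$
the presence of $\nabla\dot U_{r,l}$ inside $\mathcal{C}_{r,l}$ accounting for the extra derivative on the basic state ($H^{s+1}$ rather than $H^s$). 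Summing over $|\beta|\le s$ with the $\gamma$-weights $\gamma^{s-|\beta|}$ defining $\|\cdot\|_{H^s_\gamma(\Omega_T)}$ then yields the stated inequality. The one delicate point—standard in the tame Nash--Moser framework but requiring care at each differentiation order—is to ensure that the Moser splitting does not produce any high--high cross product $\|\dot V\|_{H^s_\gamma}\cdot\|(\dot U_{r,l},\nabla\dot\Phi_{r,l})\|_{H^{s+1}_\gamma}$, so that the resulting bound is linear and tame as required by the subsequent iteration scheme.
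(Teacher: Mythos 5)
Your proposal is correct and is essentially the paper's own argument: the paper likewise regards $\partial_t W_4^\pm+v_{r,l}\partial_{x_1}W_4^\pm+(\mathcal{C}_{r,l}\dot V^\pm)_4=f_4^\pm$ as a transport equation (no $\partial_{x_2}$, hence no boundary terms at $x_2=0$) and obtains the estimate by the standard weighted energy method combined with the Gagliardo--Nirenberg/Moser-type inequalities. The only detail worth making explicit is that, to end up with $\|\dot V\|_{L^{\infty}(\Omega_T)}$ rather than $\|\dot V\|_{W^{1,\infty}(\Omega_T)}$, the commutator $[\partial^\beta,v_{r,l}]\partial_{x_1}W_4^\pm$ should be expanded into products $\partial^{\alpha'}(\nabla\dot v_{r,l})\,\partial^{\beta'}W_4^\pm$ with $|\alpha'|+|\beta'|\le s$, so that the bilinear estimate puts the $H^{s}_{\gamma}$-norm on $\nabla\dot v_{r,l}$ (hence the factor $\|\dot U_{r,l}\|_{H^{s+1}_{\gamma}(\Omega_T)}$) and only the sup-norm on $W_4^\pm$, exactly in line with the tame form you state.
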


In light of definition \eqref{norm.def}, we see that, for all $s\in\mathbb{N}$ and $\theta \in H^{s}_{\gamma}(\Omega_T)$,
\begin{align*}
\|\theta\|_{H^{s}_{\gamma}(\Omega_T)}^2=\sum_{k=0}^{s}\|\p_{x_2}^k\theta \|_{L^2(H^{s-k}_{\gamma}(\omega_T))}^2,
\qquad \gamma \|\theta\|_{H^{s-1}_{\gamma}(\Omega_T)}\leq \|\theta\|_{H^{s}_{\gamma}(\Omega_T)}.
\end{align*}
Thanks to these identities, we combine Lemmas \ref{lem.tame1}--\ref{lem.tame4}
and use the Moser-type inequalities to deduce the following \emph{a priori} estimates on the $H^{s}_{\gamma}$--norm of solution $\dot{V}^{\pm}$ to the linearized problem \eqref{ELP}.
\begin{proposition}\label{pro.tame}
	Assume that the hypotheses of Theorem {\rm \ref{thm:L}} hold.
	Then there exists a constant $K_0>0$ {\rm (}independent of $s$ and $T${\rm )}
	and constants $C_{s}>0$ and $\gamma_{s}\geq 1$ {\rm (}depending on $s$, but independent of $T${\rm )} such that,
	if $K\leq K_0$, then, for all $\gamma\geq \gamma_{s}$ and
	solutions $(\dot{V}^\pm,\psi)\in H^{s+2}_{\gamma}(\Omega_T)\times H^{s+2}_{\gamma}(\omega_T)$ of problem \eqref{ELP},
	the following estimate holds{\rm :}
	\begin{align}
	\notag &\sqrt{\gamma}\big\|\dot{V}\big\|_{H^{s}_{\gamma}(\Omega_T)}
	+\big\|\mathbb{P}({\varphi})\dot{V}|_{x_2=0}\big\|_{H^{s}_{\gamma}(\omega_T)}
	+\|\psi\|_{H^{s+1}_{\gamma}(\omega_T)}\\
	\notag &\leq C_{s}\Big\{\frac1{\sqrt\gamma}\left\|f\right\|_{H^{s}_{\gamma}(\Omega_T)}
	+\frac1{\gamma^{3/2}}\left\|f\right\|_{L^2(H^{s+1}_{\gamma}(\omega_T))}+ \frac1{\gamma}\|g\|_{H^{s+1}_{\gamma}(\omega_T)}\\
		&\qquad\quad +\frac1{\gamma}\big(\big\|\mathbb{P} ( {\varphi})\dot{V} |_{x_2=0}\big\|_{L^{\infty}(\omega_T)}
	+\|\psi\|_{W^{1,\infty}(\omega_T)}\big)\|(\dot U_{r,l},\partial_{x_2}\dot U_{r,l},\nabla\dot\Phi_{r,l})|_{x_2=0}\|_{{H^{s+1}_{\gamma}(\omega_T)}}
	\notag\\
	& \qquad\quad +\frac1{\gamma^{3/2}}\big( \left\|f\right\|_{L^{\infty}(\Omega_T)} +\big\|\dot{V}\big\|_{W^{1,\infty}(\Omega_T)}\big)\big\|\big(\dot U_{r,l},\nabla\dot\Phi_{r,l}\big)\big\|_{H^{s+2}_{\gamma}(\Omega_T)}\Big\}.
	\label{tame5}
	\end{align}
\end{proposition}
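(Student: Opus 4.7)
The plan is to work with the reformulated problem \eqref{W.eq}--\eqref{W.bdy} for $W = T_{r,l}^{-1}\dot V$, combine the four preceding lemmas to produce a tame estimate for the $H^s_\gamma$-norm of $W$, and then translate back to $\dot V$ via the smooth transformation $T_{r,l}$. The crucial algebraic identity driving the whole argument is
$$\|\theta\|_{H^{s}_\gamma(\Omega_T)}^2 = \sum_{k=0}^{s} \|\partial_{x_2}^k \theta\|_{L^2(H^{s-k}_\gamma(\omega_T))}^2,$$
which lets us recover the full Sobolev norm from the tangential $L^2(H^s_\gamma)$-norm (Lemma \ref{lem.tame1}), the normal derivative estimates for the first three components of $W^\pm$ (Lemma \ref{lem.tame3}), and the entropy estimate for $W_4^\pm$ (Lemma \ref{lem.tame4}).

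First I would start from Lemma \ref{lem.tame1}, which controls $\sqrt{\gamma}\|W\|_{L^2(H^s_\gamma(\omega_T))}$, the trace $\|W^{\mathrm{nc}}|_{x_2=0}\|_{H^s_\gamma(\omega_T)}$, and $\|\psi\|_{H^{s+1}_\gamma(\omega_T)}$ in terms of $\gamma^{-1}\|g\|_{H^{s+1}_\gamma}$, $\gamma^{-3/2}\|F\|_{L^2(H^{s+1}_\gamma)}$, and lower-order nonlinear contributions. Next I would insert Lemma \ref{lem.tame2} to bound $\gamma\|\dot\xi^\pm\|_{H^{s-1}_\gamma}$, exploiting the crucial factor $1/\gamma$ on the right-hand side; this intermediate bound is needed only to feed it into Lemma \ref{lem.tame3}, which expresses $\|\partial_{x_2}^k W^\pm\|_{L^2(H^{s-k}_\gamma)}$ for $k\geq 1$ (on the first three components) via $\|W\|_{L^2(H^s_\gamma)}$, $\|(F,W,\dot\xi)\|_{H^{s-1}_\gamma}$, and coefficient norms. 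Finally, Lemma \ref{lem.tame4} is applied directly to control $\gamma\|W_4^\pm\|_{H^s_\gamma}$ via the transport equation for the entropy.

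Step three is the absorption. After summing these contributions, the right-hand side contains $\|\dot V\|_{H^s_\gamma}$-type terms arising (i) from Lemma \ref{lem.tame2} and Lemma \ref{lem.tame4}, each of which carries a factor $1/\gamma$, and (ii) from the translation $\dot V = T_{r,l} W$, which by Moser's product and composition inequalities produces terms bounded by $C(K)\|W\|_{H^s_\gamma}$ plus $\|W\|_{L^\infty}\|(\dot U_{r,l},\nabla\dot\Phi_{r,l})\|_{H^{s+2}_\gamma}$. Under the smallness assumption $K\leq K_0$ and after choosing $\gamma\geq\gamma_s$ sufficiently large, both families of terms are absorbable into the left-hand side. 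The main obstacle lies precisely in this bookkeeping: the estimates of Lemmas \ref{lem.tame1}--\ref{lem.tame4} must be multiplied by suitable powers of $\gamma$ and added in a coherent way so that (a) each absorbable contribution sits on the right-hand side with a sufficiently negative power of $\gamma$ relative to the corresponding left-hand side weight, and (b) the non-absorbable lower-order contributions assemble into the exact form prescribed by \eqref{tame5}. In particular, the $L^\infty$ and $W^{1,\infty}$ coefficients in front of the basic-state norms must be tracked carefully, using the Gagliardo--Nirenberg and Moser-type inequalities from \cite{CS08MR2423311} to convert products into the tame form $\|a\|_{L^\infty}\|b\|_{H^{s+k}_\gamma} + \|b\|_{L^\infty}\|a\|_{H^{s+k}_\gamma}$.

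To close the estimate, I would then convert the bound on $W$ back to one on $\dot V$ using $\dot V = T_{r,l} W$ together with the Moser inequalities again, and use identity \eqref{P.bb2} to turn the trace of $W^{\mathrm{nc}}|_{x_2=0}$ into that of $\mathbb{P}(\varphi)\dot V|_{x_2=0}$ (the latter conversion introduces at most smooth coefficients of $(p_{r,l},s_{r,l},\partial_{x_1}\varphi)|_{x_2=0}$, whose $H^{s+1}_\gamma(\omega_T)$-norms are subsumed in the basic-state term already appearing in \eqref{tame5}). The $\|\mathbb{P}(\varphi)\dot V|_{x_2=0}\|_{L^\infty}$ and $\|\psi\|_{W^{1,\infty}}$ terms in \eqref{tame5} arise from the Moser inequality applied to the boundary term $\bm{b}_\sharp\psi + \Wideubar{M}\dot V|_{x_2=0}$ appearing through \eqref{ELP.2}, and their coefficient norm $\|(\dot U_{r,l},\partial_{x_2}\dot U_{r,l},\nabla\dot\Phi_{r,l})|_{x_2=0}\|_{H^{s+1}_\gamma}$ is inherited directly from Lemma \ref{lem.tame1}.
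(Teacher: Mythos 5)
Your proposal matches the paper's own (very brief) argument: the paper likewise obtains \eqref{tame5} by combining Lemmas \ref{lem.tame1}--\ref{lem.tame4} through the identity $\|\theta\|_{H^{s}_{\gamma}(\Omega_T)}^2=\sum_{k=0}^{s}\|\partial_{x_2}^k\theta\|_{L^2(H^{s-k}_{\gamma}(\omega_T))}^2$ and the inequality $\gamma\|\theta\|_{H^{s-1}_{\gamma}}\leq\|\theta\|_{H^{s}_{\gamma}}$, using Moser-type estimates to pass between $W$ and $\dot V$ and absorbing the $\gamma$-weighted lower-order terms for $\gamma$ large. Your bookkeeping of the $\gamma$-powers, the role of the vorticity bound as input to Lemma \ref{lem.tame3}, and the conversion of $W^{\mathrm{nc}}|_{x_2=0}$ to $\mathbb{P}(\varphi)\dot V|_{x_2=0}$ via \eqref{P.bb2} is consistent with that route, so the proposal is correct and essentially identical to the paper's proof.
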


\subsection{Proof of Theorem \ref{thm:L}}
According to Theorem \ref{thm:L1}, the effective linear problem \eqref{ELP} is well-posed for sources
terms $(f^{\pm},g)\in L^2(H^1(\omega_T))\times H^1(\omega_T)$ that vanish in the past.
Following \cite{RM74MR0340832,CP82MR678605}, we can use Proposition \ref{pro.tame}
to convert Theorem \ref{thm:L1} into a well-posedness result for \eqref{ELP} in $H^{s}$.
More precisely, we can prove that, under the assumptions of Theorem \ref{thm:L},
if $(f^{\pm},g)\in H^{s+1}(\Omega_T)\times H^{s+1}(\omega_T)$ vanish in the past,
then there exists a unique solution $(\dot{V}^{\pm},\psi)\in H^{s}(\Omega_T)\times H^{s+1}(\omega_T)$
that vanishes in the past and satisfies \eqref{tame5} for all $\gamma\geq \gamma_{s}$.

It remains to prove the tame estimate \eqref{tame}.
To this end, we first fix the value of $\gamma$ such that $\gamma$ is greater than $\max\{\gamma_{3},\ldots,\gamma_{\tilde{\alpha}}\}$.
Using \eqref{tame5} with $s=3$ and \eqref{H.L.1}, we have
\begin{align}
\notag &\big\|\dot{V} \big\|_{H^3_{\gamma}(\Omega_T)}
+\big\|\mathbb{P}( {\varphi})\dot{V} |_{x_2=0}\big\|_{H^3_{\gamma}(\omega_T)}+\|\psi\|_{H^{4}_{\gamma}(\omega_T)}\notag\\
&\lesssim
K \big(\left\|f \right\|_{L^{\infty}(\Omega_T)}+\big\|\dot{V} \big\|_{W^{1,\infty}(\Omega_T)}
+\big\|\mathbb{P} ( {\varphi})\dot{V} |_{x_2=0}\big\|_{L^{\infty}(\omega_T)}+\|\psi\|_{W^{1,\infty}(\omega_T)}\big)\notag\\
&\quad +\left\|f \right\|_{H^{4}_{\gamma}(\Omega_T)}+\|g\|_{H^{4}_{\gamma}(\omega_T)}.
\label{tame.p1}
\end{align}
Note that $T$ and $\gamma$ have been fixed.
By virtue of classical Sobolev inequalities
$\|\theta\|_{L^{\infty}(\Omega_T)}\lesssim \|\theta\|_{H^2(\Omega_T)}$
and $\|\theta\|_{L^{\infty}(\omega_T)}\lesssim \|\theta\|_{H^2(\omega_T)}$,
we also get $\left\|f \right\|_{L^{\infty}(\Omega_T)}\lesssim  \left\|f \right\|_{H^{4}_{\gamma}(\Omega_T)}$, hence
\begin{equation}\label{sobolev.gamma}
\begin{split}
&\big\|\dot{V} \big\|_{W^{1,\infty}(\Omega_T)}
+\big\|\mathbb{P} ( {\varphi})\dot{V} |_{x_2=0}\big\|_{L^{\infty}(\omega_T)}+\|\psi\|_{W^{1,\infty}(\omega_T)}
\\ &\quad \lesssim  \big\|\dot{V} \big\|_{H^3_{\gamma}(\Omega_T)}
+\big\|\mathbb{P} ( {\varphi})\dot{V} |_{x_2=0}\big\|_{H^3_{\gamma}(\omega_T)}+\|\psi\|_{H^{4}_{\gamma}(\omega_T)}\,.
\end{split}
\end{equation}
We utilize \eqref{tame.p1} together with \eqref{sobolev.gamma} and take $K>0$ sufficiently small to obtain
\begin{align*}
&\big\|\dot{V} \big\|_{W^{1,\infty}(\Omega_T)}
+\big\|\mathbb{P} ( {\varphi})\dot{V} |_{x_2=0}\big\|_{L^{\infty}(\omega_T)}+\|\psi\|_{W^{1,\infty}(\omega_T)}\\
 &\quad\lesssim \left\|f \right\|_{H^{4}_{\gamma}(\Omega_T)}+\|g\|_{H^{4}_{\gamma}(\omega_T)}.
\end{align*}
Plugging these estimates into \eqref{tame5} yields \eqref{tame}, which completes the proof of Theorem \ref{thm:L}.

\section{Approximate solution}\label{sct:AS}
This section is devoted to the construction of the so-called approximate solution, which enables us to reduce the original problem \eqref{Phi.a}--\eqref{E0} and \eqref{Phi.b} into a nonlinear one with zero initial data.
The reformulated problem is expected to be solved in the space of functions vanishing in the past,
where we have already established a well-posedness result for the linearized problem, \emph{cf.}\;Theorem \ref{thm:L}.
The necessary compatibility conditions have to be prescribed on the initial data $(U_0^{\pm},\varphi_0)$
for the existence of smooth approximate solutions $(U^{a\,\pm},\Phi^{a\,\pm})$,
which are solutions of problem \eqref{Phi.a}--\eqref{E0} and \eqref{Phi.b} in the sense of Taylor's series at time $t=0$.

Let $s\geq 3$ be an integer.
Assume that $\dot{U}_0^{\pm}:=U_0^{\pm}-\widebar{U}^{\pm}\in H^{s+1/2}(\mathbb{R}_+^2)$
and  $\varphi_0\in H^{s+1}(\mathbb{R})$.
We also assume without loss of generality that $(\dot{U}_0^{\pm},\varphi_0)$ has the compact support:
\begin{align} \label{CA1}
\supp \dot{U}_0^{\pm}\subset \{x_2\geq 0,\, x_1^2+x_2^2\leq 1\},\qquad \supp \varphi_0\subset [-1,1].
\end{align}
We extend $\varphi_0$ from $\mathbb{R}$ to $\mathbb{R}_+^2$ by constructing
$\dot{\Phi}_0^\pm\in H^{s+3/2}(\mathbb{R}_+^2)$ such that
\begin{align} \label{CA2}
\dot{\Phi}_0^{\pm}\,\!|_{x_2=0}
=\varphi_0,\quad \supp \dot{\Phi}_0^{\pm}\subset \{x_2\geq 0,\, x_1^2+x_2^2\leq 2\},
\end{align}
and
\begin{align} \label{CA3}
\big\|\dot{\Phi}_0^{\pm}\big\|_{H^{s+3/2}(\mathbb{R}_+^2)}
\leq C\|\varphi_0\|_{H^{s+1}(\mathbb{R})}.
\end{align}
For problem \eqref{Phi.a} and \eqref{Phi.b}, we prescribe the initial data:
\begin{align} \label{Phi.initial}
\Phi^{\pm}|_{t=0}=\Phi_0^{\pm}:=\dot{\Phi}_0^{\pm}+\widebar{\Phi}^{\pm}\,,
\end{align}
where $\widebar{\Phi}^{\pm}$ are defined in \eqref{CVS0}.
The estimate \eqref{CA3} and  the Sobolev embedding theorem yield
\begin{align} \label{CA4}
\pm \p_{x_2}\Phi_0^{\pm}\geq {7}/{8}\qquad\,\, \textrm{for all }x\in\mathbb{R}_+^2,
\end{align}
provided $\varphi_0$ is sufficiently small in $H^{s+1}(\mathbb{R})$.

Let us denote the perturbation by
$(\dot{U}^{\pm},\dot{\Phi}^{\pm}):=(U^{\pm}-\widebar{U}^{\pm},\Phi^{\pm}-\widebar{\Phi}^{\pm})$,
and  the traces of the ${k}$-th order time derivatives on $\{t=0\}$  by
$$
\dot{U}^{\pm}_{{k}}:=\p_t^{{k}}\dot{U}^{\pm}|_{t=0},\quad
\dot{\Phi}^{\pm}_{{k}}:=\p_t^{{k}}\dot{\Phi}^{\pm}|_{t=0},\qquad\,\, {k}\in\mathbb{N}.
$$

To introduce the compatibility conditions, we need to determine traces $\dot{U}^{\pm}_{{k}}$ and $\dot{\Phi}^{\pm}_{{k}}$
in terms of the initial data $\dot{U}^{\pm}_0$ and $\dot{\Phi}^{\pm}_0$ through equations \eqref{E0.a}--\eqref{E0.a1} and \eqref{Phi.b}.
For this purpose, we set $\mathcal{W}^{\pm}:=(\dot{U}^{\pm},\nabla_x\dot{U}^{\pm},\nabla_x\dot{\Phi}^{\pm})\in\mathbb{R}^{14}$,
and rewrite  \eqref{E0.a}--\eqref{E0.a1}, \eqref{Phi.b} (for $(U^\pm,\Phi^\pm)=(\widebar{U}^\pm+\dot U^\pm,\widebar{\Phi}^\pm+\dot\Phi^\pm)$) as
\begin{align}\label{tilde.U.Phi}
\p_t \dot{U}^{\pm}=\mathbf{F}_1(\mathcal{W}^{\pm}),\qquad \p_t \dot{\Phi}^{\pm}=\mathbf{F}_2(\mathcal{W}^{\pm}),
\end{align}
where $\mathbf{F}_1$ and $\mathbf{F}_2$ are suitable $C^{\infty}$--functions that vanish at the origin.
Applying operator $\p^{k}_t$ to \eqref{tilde.U.Phi} and taking the traces at time $t=0$,
one can employ the generalized Fa\`a di Bruno's formula ({\it cf}. \cite[Theorem 2.1]{M00MR1781515})
to derive
\begin{align} \label{tilde.U.0}
&\dot{U}^{\pm}_{{k}+1}
=\sum_{\alpha_{i}\in\mathbb{N}^{11},|\alpha_1|+\cdots+{k} |\alpha_{{k}}|={k}}
D^{\alpha_1+\cdots+\alpha_{k}}\mathbf{F}_1(\mathcal{W}^{\pm}_{0})\prod_{i=1}^{k}\frac{{k}!}{\alpha_{i}!}
\left(\frac{\mathcal{W}_{i}^{\pm}}{i!}\right)^{\alpha_{i}},\\ \label{tilde.Phi.0}
&\dot{\Phi}^{\pm}_{{k}+1}
=\sum_{\alpha_{i}\in\mathbb{N}^{11},|\alpha_1|+\cdots+{k} |\alpha_{{k}}|={k}}
D^{\alpha_1+\cdots+\alpha_{k}}\mathbf{F}_2(\mathcal{W}^{\pm}_{0})\prod_{i=1}^{k}\frac{{k}!}{\alpha_{i}!}
\left(\frac{\mathcal{W}_{i}^{\pm}}{i!}\right)^{\alpha_{i}},
\end{align}
where $\mathcal{W}_{i}^{\pm}$ denotes trace $(\dot{U}_{i}^{\pm},\nabla_x\dot{U}_{i}^{\pm},\nabla_x\dot{\Phi}_{i}^{\pm})$
at $t=0$.
From \eqref{tilde.U.0}--\eqref{tilde.Phi.0}, one can determine
$(\dot{U}^{\pm}_{{k}},\dot{\Phi}^{\pm}_{{k}})_{{k}\geq 0}$ inductively as functions of
the initial data $\dot{U}^{\pm}_{0}$ and $\dot{\Phi}^{\pm}_{0}$.
Furthermore, we have the following lemma (see \cite[Lemma 4.2.1]{M01MR1842775} for the proof):

\begin{lemma} \label{lem.Metivier}
	Let $s\ge 3$ be an integer. Assume that \eqref{CA1}--\eqref{CA3} 　and \eqref{CA4} hold.
	Then equations \eqref{tilde.U.0}--\eqref{tilde.Phi.0} determine
	$\dot{U}^{\pm}_{{k}}\in H^{s+1/2-{k}}(\mathbb{R}_+^2)$ for ${k}=1,\ldots,s$,
	and $\dot{\Phi}^{\pm}_{{k}}\in H^{s+3/2-{k}}(\mathbb{R}_+^2)$ for ${k}=1,\ldots,s+1$,
	such that
	\begin{align*}
	\supp \dot{U}_{{k}}^{\pm}\subset \{x_2\geq 0,\, x_1^2+x_2^2\leq 1\}, \qquad
	\supp \dot{\Phi}_{{k}}^{\pm}\subset \{x_2\geq 0,\, x_1^2+x_2^2\leq 2\}.
	\end{align*}
	Moreover,
	\begin{align} \notag
	\sum_{{k}=0}^{s}\big\|\dot{U}^{\pm}_{{k}}\big\|_{H^{s+1/2-{k}}(\mathbb{R}_+^2)}
	+\sum_{{k}=0}^{s+1}\big\|\dot{\Phi}^{\pm}_{{k}}\big\|_{H^{s+3/2-{k}}(\mathbb{R}_+^2)}
	\leq C\big(\big\|\dot{U}^{\pm}_0\big\|_{H^{s+1/2}(\mathbb{R}_+^2)}+\|\varphi_0\|_{H^{s+1}(\mathbb{R})} \big),
	\end{align}
	where constant $C>0$ depends only on $s$
	and $\|(\dot{U}^{\pm}_{0},\dot{\Phi}^{\pm}_{0})\|_{W^{1,\infty}(\mathbb{R}_+^2)}$.
\end{lemma}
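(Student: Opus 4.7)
The plan is to establish Lemma \ref{lem.Metivier} by induction on $k$, using the explicit recursive formulas \eqref{tilde.U.0}--\eqref{tilde.Phi.0} as the engine and standard Moser-type estimates (both product and composition) as the main analytic tool. For the base case $k=0$, the required regularity is exactly the hypothesis on $(\dot{U}_0^\pm,\dot{\Phi}_0^\pm)$ and the support statement follows from \eqref{CA1} and \eqref{CA2}. In the inductive step I would assume that $\dot{U}_j^\pm\in H^{s+1/2-j}(\mathbb{R}_+^2)$ for $j=0,\ldots,k$ and $\dot{\Phi}_j^\pm\in H^{s+3/2-j}(\mathbb{R}_+^2)$ for $j=0,\ldots,k$, and read off $\dot{U}_{k+1}^\pm$, $\dot{\Phi}_{k+1}^\pm$ from \eqref{tilde.U.0}--\eqref{tilde.Phi.0}.

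The heart of the argument is to control each generic summand $D^{\alpha_1+\cdots+\alpha_k}\mathbf{F}_i(\mathcal{W}_0^\pm)\prod_{j=1}^{k}(\mathcal{W}_j^\pm)^{\alpha_j}$ under the constraint $\sum_j j|\alpha_j|=k$. First, noting that $\mathcal{W}_0^\pm=(\dot U_0^\pm,\nabla_x\dot U_0^\pm,\nabla_x\dot\Phi_0^\pm)$ has the lowest component $\nabla_x\dot U_0^\pm\in H^{s-1/2}$, and since $s\ge 3$ gives $s-1/2>1=d/2$ for $d=2$, I can invoke the Moser composition estimate to place $D^{\alpha_1+\cdots+\alpha_k}\mathbf{F}_i(\mathcal{W}_0^\pm)$ in $H^{s-1/2}(\mathbb{R}_+^2)$ with norm controlled by a $C^\infty$ function of $\|(\dot U_0^\pm,\dot\Phi_0^\pm)\|_{W^{1,\infty}(\mathbb{R}_+^2)}$ times $\|\mathcal{W}_0^\pm\|_{H^{s-1/2}}$. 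Similarly, by the inductive hypothesis $\mathcal{W}_j^\pm\in H^{s-1/2-j}(\mathbb{R}_+^2)$ for $1\le j\le k$. Then using the tame Moser product inequality
\[
\Big\|\prod_{\ell=1}^N u_\ell\Big\|_{H^{m}}\lesssim\sum_{\ell=1}^N\|u_\ell\|_{H^{m}}\prod_{\ell'\neq \ell}\|u_{\ell'}\|_{L^\infty},
\]
combined with interpolation to bring the regularity index to $m=s-k-1/2$ (respectively $s-k+1/2$ for the $\dot\Phi_{k+1}$ formula, which saves one derivative thanks to the extra $1$ in the target space $H^{s+3/2-(k+1)}$), yields the desired bound provided one checks that the exponent balance $\sum_j j|\alpha_j|=k$ matches the derivative loss. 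Summing over admissible multi-indices concludes the inductive step and furnishes the stated estimate after absorbing the constants into the $C>0$ allowed to depend on $s$ and $\|(\dot U_0^\pm,\dot\Phi_0^\pm)\|_{W^{1,\infty}}$.

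The support property follows at every induction step since each $\dot{U}_{k+1}^\pm$, $\dot{\Phi}_{k+1}^\pm$ is a polynomial in the $\mathcal{W}_j^\pm$ with coefficients depending locally on $\mathcal{W}_0^\pm$, and the derivatives $\nabla_x$ and products preserve the supports in $\{x_1^2+x_2^2\le 1\}$ and $\{x_1^2+x_2^2\le 2\}$ respectively; the slightly larger set for $\dot{\Phi}_k^\pm$ simply reflects the extension \eqref{CA2} of $\varphi_0$.

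The main obstacle I anticipate is the bookkeeping in the tame product estimate: the factor $\mathcal{W}_j^\pm$ sits in $H^{s-1/2-j}$, which for $j$ close to $s$ is no longer in an algebra, so one cannot naively apply $H^m\cdot H^m\hookrightarrow H^m$. One must carefully use the combinatorial constraint $\sum_j j|\alpha_j|=k$ together with the Gagliardo--Nirenberg interpolation $\|u\|_{H^m}\lesssim \|u\|_{L^\infty}^{1-\theta}\|u\|_{H^{m_0}}^\theta$ to distribute the derivative budget, while keeping the $L^\infty$ norms of $\mathcal{W}_j^\pm$ (which, being initial traces of smooth ``background'' candidate solutions, are finite by Sobolev embedding since $s+1/2-j-1>d/2$ for the relevant range) absorbed into the constant. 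Once this accounting is carried out for one generic multi-index $(\alpha_1,\ldots,\alpha_k)$, summation over the finite index set finishes the proof.
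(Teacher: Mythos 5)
Your overall strategy --- induction on $k$, the Fa\`a di Bruno expansion \eqref{tilde.U.0}--\eqref{tilde.Phi.0}, and tame product/composition (Moser) estimates --- is exactly the route behind this lemma (the paper itself does not prove it but refers to M\'etivier's Lemma 4.2.1, whose proof is of this type). However, as written your argument has two genuine gaps. First, the half of the statement concerning $\dot{\Phi}^{\pm}_{k+1}$ is not justified: you say the product can be brought to the index $s-k+1/2$ ``thanks to the extra $1$ in the target space $H^{s+3/2-(k+1)}$'', which is circular --- the target is better by one derivative precisely because the claim is stronger, not the other way around. A generic $C^\infty$ function of $\mathcal{W}=(\dot U,\nabla_x\dot U,\nabla_x\dot\Phi)$ would only give $\dot\Phi_{k+1}\in H^{s-1/2-k}$, one derivative short (and for $k=s$ the term containing $\nabla_x\dot U_s\in H^{-1/2}$ would even be disastrous). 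What saves the day is the structure of the eikonal equation \eqref{Phi.b}: $\mathbf{F}_2$ depends only on the components $(\dot U,\partial_{x_1}\dot\Phi)$ of $\mathcal{W}$ and \emph{not} on $\nabla_x\dot U$, so in the expansion \eqref{tilde.Phi.0} only $\dot U_i\in H^{s+1/2-i}$ and $\partial_{x_1}\dot\Phi_i\in H^{s+1/2-i}$ appear, and the deficits then add up relative to $s+1/2$, giving $H^{s+1/2-k}=H^{s+3/2-(k+1)}$. Without this observation the claimed regularity for $\dot\Phi_k$, in particular $\dot\Phi_{s+1}\in H^{1/2}$, is out of reach.

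Second, your handling of the low-regularity factors is flawed: the assertion that $\|\mathcal{W}_j^{\pm}\|_{L^\infty}$ is finite ``since $s+1/2-j-1>d/2$ for the relevant range'' fails at the endpoints, e.g.\ when computing $\dot U_s$ one meets $\mathcal{W}_{s-1}\in H^{1/2}(\mathbb{R}^2_+)\not\subset L^\infty$, so the $L^\infty$-based product inequality you quote cannot be applied there. The fix is to note that the constraint $\sum_i i|\alpha_i|=k$ forces such a factor to occur alone in its term, and to use the bilinear Sobolev multiplication $H^{\sigma_1}\cdot H^{\sigma_2}\hookrightarrow H^{\sigma}$ (valid for $\sigma\le\min(\sigma_1,\sigma_2)$, $\sigma_1+\sigma_2-\sigma>1$ in two dimensions; here the margin is $s-1/2>1$) instead of the tame estimate at that step. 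Relatedly, you cannot ``absorb the $L^\infty$ norms of $\mathcal{W}_j^{\pm}$ ($j\ge1$) into the constant'': those sup-norms involve higher derivatives of the data and are not functions of $\|(\dot U_0^{\pm},\dot\Phi_0^{\pm})\|_{W^{1,\infty}}$, on which alone the constant in the lemma is allowed to depend; to get the stated tame bound you must interpolate (Gagliardo--Nirenberg) so that only $\|\mathcal{W}_0^{\pm}\|_{L^\infty}$ enters the constants and the top norms $\|\dot U^{\pm}_0\|_{H^{s+1/2}}$, $\|\varphi_0\|_{H^{s+1}}$ appear linearly.
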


To construct a \emph{smooth} approximate solution, one has to impose certain assumptions on
traces $\dot{U}^{\pm}_{{k}}$ and $\dot{\Phi}^{\pm}_{{k}}$.
We are now ready to introduce the notion of the {\it compatibility condition}.

\begin{definition}\label{def:CC}
	Let $s\geq 3$ be an integer.
	Let $\dot{U}^{\pm}_0:=U_0^{\pm}-\widebar{U}_0^{\pm}\in H^{s+1/2}(\mathbb{R}_+^2)$ and $\varphi_0\in H^{s+1}(\mathbb{R})$
	satisfy \eqref{CA1}.
	The initial data $U_0^{\pm}$ and $\varphi_0$ are said to be compatible up to order $s$
	if there exist functions $\dot{\Phi}_0^{\pm}\in H^{s+3/2}(\mathbb{R}_+^2)$
	satisfying \eqref{CA2}--\eqref{CA4}
	such that
	functions $\dot{U}^{\pm}_1,\ldots,\dot{U}^{\pm}_{s},\dot{\Phi}^{\pm}_1,\ldots,\dot{\Phi}^{\pm}_{s+1}$
	determined by \eqref{tilde.U.0}--\eqref{tilde.Phi.0} satisfy:
	\begin{subequations} \label{compa1}
		\begin{alignat}{2}
		&\p_{x_2}^{j}\big(\dot{\Phi}^{+}_{{k}}-\dot{\Phi}^{-}_{{k}}\big)|_{x_2=0}=0
		\qquad\,\, && \textrm{for }j,{k}\in\mathbb{N}\textrm{ with } j+{k}< s+1,\\
		&\p_{x_2}^{j}\big(\dot{p}^{+}_{{k}}-\dot{p}^{-}_{{k}}\big)|_{x_2=0}=0
		\qquad\,\, && \textrm{for }j,{k}\in\mathbb{N} \textrm{ with } j+{k}< s,
		\end{alignat}
	\end{subequations}
	and
	\begin{subequations}\label{compa2}
		\begin{alignat}{2}
		&\int_{\mathbb{R}_+^2}\big|\p_{x_2}^{s+1-{k}}\big(\dot{\Phi}^{+}_{{k}}-\dot{\Phi}^{-}_{{k}}\big)\big|^2\d x_1\frac{\d x_2}{x_2}
		<\infty\qquad\,\, && \textrm{for }{k}=0,\ldots,  s+1,\\
		&\int_{\mathbb{R}_+^2}\big|\p_{x_2}^{s-{k}}\big(\dot{p}^{+}_{{k}}-\dot{p}^{-}_{{k}}\big)\big|^2\d x_1\frac{\d x_2}{x_2}
		<\infty\qquad\,\, && \textrm{for }{k}=0,\ldots, s.
		\end{alignat}
	\end{subequations}
\end{definition}

It follows from Lemma \ref{lem.Metivier}
that $\dot{p}^{\pm}_{0},\ldots,\dot{p}^{\pm}_{s-2},\dot{\Phi}^{\pm}_{0},\ldots,
\dot{\Phi}^{\pm}_{s-1}\in  H^{5/2}(\mathbb{R}_+^2)\subset W^{1,\infty}(\mathbb{R}_+^2)$.
Then we can take the $j$-th order derivatives of the traces in \eqref{compa1}.
Relations \eqref{compa1} and \eqref{compa2}  enable us to utilize the lifting result
in \cite[Theorem 2.3]{LM72MR0350178} to construct the approximate solution in the following lemma.
We refer to \cite[Lemma 3]{CS08MR2423311} for the  proof.

\begin{lemma} \label{lem.app}
	Let $s\geq 3$ be an integer.
	Assume that $\dot{U}^{\pm}_0:=U_0^{\pm}-\widebar{U}_0^{\pm}\in H^{s+1/2}(\mathbb{R}_+^2)$
	and $\varphi_0\in H^{s+1}(\mathbb{R})$ satisfy \eqref{CA1},
	and that the initial data $U_0^{\pm}$ and $\varphi_0$ are compatible up to order $s$.
	If $\dot{U}^{\pm}_0$ and $\varphi_0$ are sufficiently small,
	then there exist functions $U^{a\pm}$, $\Phi^{a\pm}$, and $\varphi^a$ such that
	$\dot{U}^{a\pm}:=U^{a\pm}-\widebar{U}^{\pm}\in H^{s+1}(\Omega)$,
	$\dot{\Phi}^{a\pm}:=\Phi^{a\pm}-\widebar{\Phi}^{\pm}\in H^{s+2}(\Omega)$,
	$\varphi^a\in H^{s+3/2}(\p\Omega)$, and
	\begin{subequations} \label{app}
		\begin{alignat}{2}
		\label{app.1}&\p_t\Phi^{a\pm}+v^{a\pm}\p_{x_1}\Phi^{a\pm}-u^{a\pm}=0\qquad &&\textrm{in }\Omega,\\
		\label{app.2}&\p_t^j\mathbb{L}(U^{a\pm},\Phi^{a\pm})|_{t=0}=0\qquad &&\textrm{for }j=0,\ldots,s-1,\\
		\label{app.3}&\Phi^{a+}=\Phi^{a-}=\varphi^a\qquad &&\textrm{on }\p\Omega,\\
		\label{app.4}&\mathbb{B}(U^{a+},U^{a-},\varphi^a)=0\qquad &&\textrm{on }\p\Omega.
		\end{alignat}
	\end{subequations}
	Furthermore, we have
	\begin{align}
	\label{app2}&\pm \p_{x_2}\Phi^{a\pm}\geq {3}/{4}\quad \textrm{for all }(t,x)\in\Omega,\\
	\label{app4}&\supp \big(\dot{U}^{a\pm},\dot{\Phi}^{a\pm} \big)\subset \left\{t\in[-T,T],\,x_2\geq 0,\,x_1^2+x_2^2\leq 3 \right\},
	\end{align}
	and
	\begin{align}
	\big\|\dot{U}^{a\pm}\big\|_{H^{s+1}(\Omega)}
	+\big\|\dot{\Phi}^{a\pm}\big\|_{H^{s+2}(\Omega)}+\|\varphi^a\|_{H^{s+3/2}(\p\Omega)}
	\leq\varepsilon_0\big(\big\|\dot{U}^{\pm}_0\big\|_{H^{s+1/2}(\mathbb{R}_+^2)}
	+\|\varphi_0\|_{H^{s+1}(\mathbb{R})}\big), \label{app3}
	\end{align}
where we denote by $\varepsilon_0(\cdot)$  a function that tends to $0$ when its argument tends to $0$.
\end{lemma}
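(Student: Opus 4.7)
The plan is to construct $(U^{a\pm},\Phi^{a\pm},\varphi^a)$ by a trace-lifting argument in the spirit of Coulombel--Secchi \cite[Lemma 3]{CS08MR2423311}. Starting from the prescribed initial data $(\dot U^\pm_0,\dot\Phi^\pm_0)$, I would first compute the formal Taylor coefficients $(\dot U^\pm_k,\dot\Phi^\pm_k)$ at $t=0$ through the recursion \eqref{tilde.U.0}--\eqref{tilde.Phi.0}, then apply the lifting theorem of Lions--Magenes \cite[Theorem 2.3]{LM72MR0350178} in the relevant Sobolev spaces on $\Omega$ to produce $\dot U^{a\pm}\in H^{s+1}(\Omega)$ and $\dot\Phi^{a\pm}\in H^{s+2}(\Omega)$ having these prescribed time-traces. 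Setting $U^{a\pm}:=\bar U^\pm+\dot U^{a\pm}$, $\Phi^{a\pm}:=\bar\Phi^\pm+\dot\Phi^{a\pm}$ and $\varphi^a:=\Phi^{a\pm}|_{x_2=0}$ would then furnish all the candidate quantities.

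By Lemma \ref{lem.Metivier}, the traces $\dot U^\pm_k$ and $\dot\Phi^\pm_k$ sit in the trace spaces demanded by Lions--Magenes, namely $H^{s+1/2-k}(\mathbb R^2_+)$ and $H^{s+3/2-k}(\mathbb R^2_+)$ respectively. The compatibility conditions \eqref{compa1} supply the matching of mixed normal/time traces on the edge $\{t=0,\,x_2=0\}$ needed to ensure $\dot\Phi^{a+}=\dot\Phi^{a-}$ and $\dot p^{a+}=\dot p^{a-}$ on $\{x_2=0\}$, while the weighted Hardy-type integrability conditions \eqref{compa2} control the borderline top-order traces that fall just out of the Sobolev scale. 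Once the lifting has been performed, \eqref{app.3} and the continuity-of-pressure component of \eqref{app.4} are automatic; \eqref{app.2} is also built in, since the time-Taylor expansion of $\mathbb L(U^{a\pm},\Phi^{a\pm})$ at $t=0$ is determined by $(\dot U^\pm_k,\dot\Phi^\pm_k)$, which have been chosen precisely to annihilate the first $s$ Taylor coefficients of $\mathbb L(U^{a\pm},\Phi^{a\pm})$. Multiplication by a time cutoff $\chi(t)$ equal to $1$ near $t=0$ and supported in $[-T,T]$ yields the support property \eqref{app4}; the lower bound \eqref{app2} follows from \eqref{CA4} together with the Sobolev embedding $H^{s+2}\hookrightarrow W^{1,\infty}$ and the smallness of the initial data, and the estimate \eqref{app3} is a combination of the continuity of the Lions--Magenes lifting and the tame bound of Lemma \ref{lem.Metivier}.

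The principal technical difficulty is to enforce the eikonal equation \eqref{app.1} \emph{exactly} on $\Omega$, together with the first two scalar components of \eqref{app.4}, since the naive lifting only produces $\Phi^{a\pm}$ satisfying \eqref{app.1} up to order $s-1$ in the Taylor expansion at $t=0$. I would resolve this by keeping $U^{a\pm}$ from the lifting and redefining $\Phi^{a\pm}$ as the unique solution of the transport equation \eqref{app.1} on $\Omega$ with Cauchy datum $\Phi^\pm_0$ and coefficient functions $v^{a\pm},u^{a\pm}$. This is a scalar first-order equation in $(t,x_1)$ with $x_2$ entering as a parameter, hence globally solvable by the method of characteristics, and standard linear transport estimates preserve the $H^{s+2}$ regularity as well as the compact support. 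By uniqueness of the Taylor expansion, the resulting $\Phi^{a\pm}$ still realizes the prescribed traces $\dot\Phi^\pm_k$, so \eqref{app.2}, \eqref{app.3}, and the compatibility conditions are maintained; evaluating \eqref{app.1} at $x_2=0$ then yields the first two components of \eqref{app.4}, which completes the construction.
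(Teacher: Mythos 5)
Your overall framework (compute the Taylor traces via Lemma \ref{lem.Metivier}, lift them with \cite[Theorem 2.3]{LM72MR0350178} using \eqref{compa1}--\eqref{compa2}, cut off in time, and conclude smallness from the tame bounds) is the same as the paper's, which follows \cite[Lemma 3]{CS08MR2423311}. The genuine gap is in your final step, where you keep the lifted $U^{a\pm}$ and redefine $\Phi^{a\pm}$ as the exact solutions of the two transport equations \eqref{app.1}. These two equations are decoupled and have different coefficients on $\{x_2=0\}$ (for a vortex sheet $v^{a+}\neq v^{a-}$ and $u^{a+}\neq u^{a-}$ there), so their solutions have no reason to share a common boundary trace: matching of the time-Taylor coefficients $\dot{\Phi}^{\pm}_k$ at $t=0$ only gives agreement of $\Phi^{a+}-\Phi^{a-}$ on $\{x_2=0\}$ to finite order at $t=0$, not identically in $t$ (the functions are not analytic, and the lifted $v^{a\pm},u^{a\pm}$ satisfy the jump relations only to finite order at $t=0$). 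Hence \eqref{app.3} is not established, there is no well-defined $\varphi^a$, and your claim that evaluating \eqref{app.1} at $x_2=0$ yields the first two components of \eqref{app.4} is circular, since it presupposes precisely the common trace you have not proved. A second, related problem is the support property: if you cut off in time before solving the transport equation, the solution $\dot{\Phi}^{a\pm}$ is merely transported (not zero) for $|t|\geq T$ and so fails \eqref{app4}; if you cut off afterwards, you destroy the exact identity \eqref{app.1}.

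The fix used in the paper (and visible in the modified state \eqref{modified} of the Nash--Moser scheme) goes the other way around: lift $\dot{\Phi}^{a\pm}$, $v^{a\pm}$, $p^{a\pm}$, $s^{a\pm}$ from their traces, using \eqref{compa1}--\eqref{compa2} and Lions--Magenes to arrange $\Phi^{a+}=\Phi^{a-}$ and $p^{a+}=p^{a-}$ exactly on $\{x_2=0\}$, apply the cut-offs, and then \emph{define} $u^{a\pm}:=\p_t\Phi^{a\pm}+v^{a\pm}\p_{x_1}\Phi^{a\pm}$. This makes \eqref{app.1} an identity, keeps compact support and the $H^{s+1}$ regularity and smallness of $u^{a\pm}$, reproduces the prescribed traces $u^{\pm}_k$ (so \eqref{app.2} still follows from the recursion \eqref{tilde.U.0}--\eqref{tilde.Phi.0}), and now taking the trace of \eqref{app.1} at $x_2=0$, where the two sides share the front $\varphi^a:=\Phi^{a\pm}|_{x_2=0}$, legitimately yields the first two components of \eqref{app.4}. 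You should restructure your last paragraph accordingly; the rest of your argument then goes through.
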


Vectors $(U^{a\,\pm},\Phi^{a\,\pm})$ in Lemma \ref{lem.app} are called the {\it approximate solution} to problem \eqref{Phi.a}--\eqref{E0} and \eqref{Phi.b}.
It follows from relations \eqref{app.3} and \eqref{app4} that $\varphi^a$ is supported within $\{-T\le t\le T,\,x_1^2\leq 3\}$.
Since $s\geq 3$, it follows from \eqref{app3} and the Sobolev embedding theorem that
\begin{align} \notag
\big\|\dot{U}^{a\pm}\big\|_{W^{2,\infty}(\Omega)}+\big\|\dot{\Phi}^{a\pm}\big\|_{W^{3,\infty}(\Omega)}
\leq\varepsilon_0\big(\big\|\dot{U}^{\pm}_0\big\|_{H^{s+1/2}(\mathbb{R}_+^2)}+\|\varphi_0\|_{H^{s+1}(\mathbb{R})}\big).
\end{align}

Let us reformulate the original problem into that with zero initial data by utilizing the approximate solution $(U^{a\,\pm},\Phi^{a\,\pm})$.
For this purpose, we introduce
\begin{align}\label{f.a}
f^{a\,\pm}:=\left\{\begin{aligned}
& -\mathbb{L}(U^{a\,\pm},\Phi^{a\,\pm}) \qquad &\textrm{if }t>0,\\
& 0 \qquad &\textrm{if }t<0.
\end{aligned}\right.
\end{align}
Since $(\dot{U}^{a\pm},\nabla\dot{\Phi}^{a\pm})\in  H^{s+1}(\Omega)$,
taking into account \eqref{app.2} and \eqref{app4}, we obtain that $f^{a\,\pm}\in H^{s}(\Omega)$ and
\begin{align} \notag
\supp f^{a\,\pm}\subset \left\{0\le t\le T,\,x_2\geq 0,\,x_1^2+x_2^2\leq 3 \right\}.
\end{align}
Using the Moser-type inequalities and the fact that $f^{a\,\pm}$ vanish in the past,
we obtain from \eqref{app3} the estimate:
\begin{align}\label{app5}
\|f^{a\,\pm}\|_{ H^{s}(\Omega)}\leq \varepsilon_0\big(\big\|\dot{U}^{\pm}_0\big\|_{H^{s+1/2}(\mathbb{R}_+^2)}
+\|\varphi_0\|_{H^{s+1}(\mathbb{R})}\big).
\end{align}

Let $(U^{a\,\pm},\Phi^{a\,\pm})$ be the approximate solution defined in Lemma \ref{lem.app}.
By virtue of \eqref{app} and \eqref{f.a}, we see that $(U^\pm,\Phi^\pm)=(U^{a\,\pm},\Phi^{a\,\pm})+(V^\pm,\Psi^\pm)$ is a solution of the original problem \eqref{Phi.a}--\eqref{E0} and \eqref{Phi.b} on $[0,T]\times \mathbb{R}_+^2$,
if $(V^\pm, \Psi^\pm)$ solve the following problem:
\begin{align} \label{P.new}
\left\{\begin{aligned}
&\mathcal{L}(V^\pm,\Psi^\pm):=\mathbb{L}(U^{a\,\pm}+V^\pm,\Phi^{a\,\pm}+\Psi^\pm)-\mathbb{L}(U^{a\,\pm},\Phi^{a\,\pm})=f^{a\,\pm}\quad&&\textrm{in }\Omega_T,\\
&\mathcal{E}(V^\pm,\Psi^\pm):=\p_t\Psi^\pm+(v^{a\,\pm}+v^\pm)\p_{x_1}\Psi^\pm+v^\pm\p_{x_1}\Phi^{a\,\pm}-u^\pm=0\quad&&\textrm{in }\Omega_T,\\
&\mathcal{B}(V,\psi):=\mathbb{B}(U^{a\,+}+V^+, U^{a\,-}+V^-, \varphi^a+\psi)=0\quad&&\textrm{on }\omega_T,\\
&\Psi^+=\Psi^-=\psi\quad&&\textrm{on }\omega_T,\\
&(V^\pm,\Psi^\pm)=0,\quad&&\textrm{for }t< 0.
\end{aligned}\right.
\end{align}
The initial data \eqref{E0.c} and \eqref{Phi.initial} have been absorbed into the interior equations.
From \eqref{app.1} and \eqref{app.4}, we observe that $(V^\pm,\Psi^\pm)=0$ satisfies \eqref{P.new} for $t<0$.
Therefore, the original nonlinear problem on $[0,T]\times \mathbb{R}_+^2$ is now reformulated
as a problem on $\Omega_T$ whose solutions vanish in the past.

\section{Nash--Moser Iteration}\label{sct:N-M}
In this section, we are going to solve the nonlinear problem, stated in the equivalent form \eqref{P.new}, by a suitable iteration scheme of Nash--Moser type (\emph{cf.}\;\citet{H76MR0602181} and the references therein).

In order to keep the notation manageable, from now on, it is undestood that functions defined in the interior domain have $+$ and $-$ states. For instance, unless otherwise explicitly written, we write simply $(U,\Phi)$, $(V,\Psi)$, by meaning $(U^\pm,\Phi^\pm)$, $(V^\pm,\Psi^\pm)$. According to this shortcut notation, we shall write $\mathbb L(U,\Psi)$ for the pair $\mathbb L(U^\pm,\Psi^\pm)$.

\subsection{Iterative scheme}
Before describing the iterative scheme for problem \eqref{P.new}, we recall the following result for smoothing operators $S_{\theta}$ from \cite[Proposition 4]{CS08MR2423311}.
\begin{proposition}\label{pro.smooth}
	Let $T>0$ and $\gamma\geq 1$, and let $m\geq 4$ be an integer.
	Then there exists a family $\{S_{\theta}\}_{\theta\geq 1}$ of smoothing operators{\rm :}
	\begin{align*}
	S_{\theta}:\ \mathcal{F}_{\gamma}^3(\Omega_T)\times\mathcal{F}_{\gamma}^3(\Omega_T)
	\longrightarrow \bigcap_{\beta\geq 3}\mathcal{F}_{\gamma}^\beta(\Omega_T)\times\mathcal{F}_{\gamma}^\beta(\Omega_T),
	\end{align*}
	where $\mathcal{F}_{\gamma}^\beta(\Omega_T):=\big\{u\in H^{\beta}_{\gamma}(\Omega_T):u=0\textrm{ for }t<0\big\}$
	is a closed subspace of $H^{\beta}_{\gamma}(\Omega_T)$ such that
	\begin{subequations}\label{smooth.p1}
		\begin{alignat}{2}
		\label{smooth.p1a}&\|S_{\theta} u\|_{H^{\beta}_{\gamma}(\Omega_T)}\leq C\theta^{(\beta-\alpha)_+}\|u\|_{H^{\alpha}_{\gamma}(\Omega_T)}
		&&\quad\textrm{for all }\alpha,\beta\in[1,m],\\[1.5mm]
		\label{smooth.p1b}&\|S_{\theta} u-u\|_{H^{\beta}_{\gamma}(\Omega_T)}\leq C\theta^{\beta-\alpha}\|u\|_{H^{\alpha}_{\gamma}(\Omega_T)}
		&&\quad\textrm{for all }1\leq \beta\leq \alpha\leq m,\\
		\label{smooth.p1c}&\left\|\frac{\d}{\d \theta}S_{\theta} u\right\|_{H^{\beta}_{\gamma}(\Omega_T)}
		\leq C\theta^{\beta-\alpha-1}\|u\|_{H^{\alpha}_{\gamma}(\Omega_T)}&&\quad\textrm{for all }\alpha,\beta\in[1,m],
		\end{alignat}
	\end{subequations}
	and
	\begin{align}
	\label{smooth.p2}\|(S_{\theta}u-S_{\theta}v)|_{x_2=0}\|_{H^{\beta}_{\gamma}(\omega_T)}
	\leq C\theta^{(\beta+1-\alpha)_+}\|(u-v)|_{x_2=0}\|_{H^{\alpha}_{\gamma}(\omega_T)} \quad \, \textrm{for all }\alpha,\beta\in[1,m],
	\end{align}
	where $\alpha,\beta\in\mathbb{N}$, $(\beta-\alpha)_+:=\max\{0,\beta-\alpha\}$, and $C>0$ is a constant depending only on $m$.
	In particular, if $u=v$ on $\omega_T$, then $S_{\theta}u=S_{\theta}v$ on $\omega_T$.
	Furthermore, there exists another family of smoothing operators {\rm (}still denoted by $S_{\theta}${\rm )} acting on the functions
	defined on the boundary $\omega_T$ and satisfying the properties in \eqref{smooth.p1} with
	norms $\|\cdot\|_{H^{\alpha}_{\gamma}(\omega_T)}$.
\end{proposition}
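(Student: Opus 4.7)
\smallskip\noindent\textbf{Proof proposal.} The plan is to construct the family $\{S_\theta\}_{\theta\geq 1}$ by a standard Littlewood--Paley/convolution smoothing, carefully adapted to handle three separate constraints: the weighted norms $\|\cdot\|_{H^\beta_\gamma}$, the vanishing in the past (i.e.\ the closed subspace $\mathcal F_\gamma^\beta(\Omega_T)$), and the boundary-compatibility property \eqref{smooth.p2} together with the ``if $u=v$ on $\omega_T$ then $S_\theta u = S_\theta v$ on $\omega_T$'' statement. I would work throughout with the conjugated functions $\widetilde u := \mathrm{e}^{-\gamma t} u$, so that everything reduces to unweighted Sobolev norms on $\Omega_T$; once the estimates are established for $\widetilde u$, they translate immediately into the weighted ones in view of the definitions \eqref{gamma_norm}--\eqref{norm.def}.

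First I would fix a nonnegative Schwartz function $\chi$ on $\mathbb{R}^3$ whose Fourier transform $\widehat\chi$ equals $1$ in a neighborhood of the origin and is compactly supported, and set $\chi_\theta(y):=\theta^3\chi(\theta y)$. For functions on the whole $\mathbb R^3$, the standard mollifier $\widetilde S_\theta u := \chi_\theta * u$ satisfies the classical Hörmander-type bounds $\|\widetilde S_\theta u\|_{H^\beta}\lesssim \theta^{(\beta-\alpha)_+}\|u\|_{H^\alpha}$, $\|\widetilde S_\theta u - u\|_{H^\beta}\lesssim \theta^{\beta-\alpha}\|u\|_{H^\alpha}$ and $\|\tfrac{d}{d\theta}\widetilde S_\theta u\|_{H^\beta}\lesssim \theta^{\beta-\alpha-1}\|u\|_{H^\alpha}$ for $\alpha,\beta\in[1,m]$, the latter thanks to the moment-cancellation from $\widehat\chi\equiv 1$ near the origin. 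To transport these properties onto $\Omega_T$ I would use a Stein-type continuous extension $E:H^\beta(\Omega_T)\to H^\beta(\mathbb R\times\mathbb R^2)$, simultaneously bounded for every $\beta\in[1,m]$, acting only on $(t,x_2)$ (hence leaving the tangential direction $x_1$ untouched and commuting with the action of $\mathrm{e}^{-\gamma t}$, so the weighted norms are preserved), and followed by restriction back to $\Omega_T$. Preservation of the ``past-vanishing'' property is achieved by picking $\chi$ with support in $\{t\le 0\}\times\mathbb R^2$ (a one-sided mollifier in the time variable), so that $u=0$ for $t<0$ implies $\widetilde S_\theta u = 0$ for $t<0$; the space extension via $E$ is done after a reflection that respects this causality.

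To handle the trace properties \eqref{smooth.p2} and the boundary-preserving statement, I would split $u$ via a trace lifting. Precisely, let $R:H^{\beta-1/2}(\omega_T)\to H^{\beta}(\Omega_T)$ denote a standard continuous lifting simultaneously bounded for every $\beta\in[1,m]$, and let $u^b := u|_{x_2=0}$. I would then \emph{define}
\begin{equation*}
S_\theta u := \widetilde S_\theta^{\rm int}\bigl(u - R u^b\bigr) \;+\; R\bigl(S_\theta^{\rm bdy} u^b\bigr),
\end{equation*}
where $\widetilde S_\theta^{\rm int}$ is the extension-mollification-restriction operator constructed above (which may alter boundary values), and $S_\theta^{\rm bdy}$ is the analogous smoothing family directly on $\omega_T$. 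Because $u - Ru^b$ has vanishing trace, one can arrange $\widetilde S_\theta^{\rm int}(u-Ru^b)|_{x_2=0}=0$ (e.g.\ by multiplying the result by a smooth cutoff vanishing at $x_2=0$ and shown to cost only constants in Sobolev norms on compact ranges, or, more cleanly, by replacing $u-Ru^b$ with its odd extension in $x_2$ before mollifying). Consequently $(S_\theta u)|_{x_2=0} = S_\theta^{\rm bdy} u^b$, which depends only on $u^b$, and the stated implication as well as \eqref{smooth.p2} reduce to the boundary estimate $\|S_\theta^{\rm bdy} w - w\|_{H^\beta_\gamma(\omega_T)}\lesssim \theta^{(\beta+1-\alpha)_+}\|w\|_{H^{\alpha-1/2}_\gamma(\omega_T)}$, which comes directly from the trace inequality $\|\cdot\|_{H^{\alpha-1/2}_\gamma(\omega_T)}\lesssim \|\cdot\|_{H^{\alpha}_\gamma(\Omega_T)}$ combined with the unweighted bounds on $S_\theta^{\rm bdy}$.

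The main obstacle, in my view, is the simultaneous fulfilment of all three constraints: the causal (past-vanishing) property rules out symmetric mollifiers in time, while the trace-preserving property forces the $x_2$-direction to be treated separately from the tangential ones, and the quantitative estimate \eqref{smooth.p1c} with exponent $\beta-\alpha-1$ requires the smoother to have suitable moment cancellations. Each ingredient is classical in isolation (one-sided mollification, Stein extension, trace lifting, Littlewood--Paley estimates), but their compatibility has to be verified carefully; I expect the verification of \eqref{smooth.p2} with the ``$+1$'' loss, and checking that $(S_\theta u)|_{x_2=0}$ truly depends only on $u|_{x_2=0}$, to require the bulk of the bookkeeping. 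Once these are in place, the remaining estimates in \eqref{smooth.p1} on $\Omega_T$ follow from the $\mathbb R^3$ estimates for $\widetilde S_\theta$ and the continuity of $E$ and $R$ uniformly in $\beta\in[1,m]$, and the final conclusion of the proposition is obtained by reading off the boundary smoothing family $S_\theta^{\rm bdy}$ independently.
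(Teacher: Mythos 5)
The paper does not actually prove this proposition: it is quoted from \cite[Proposition 4]{CS08MR2423311} (see also \cite[Chapter 5]{FM00MR1787068}), with the remark that \eqref{smooth.p2} rests on the lifting operator $\mathcal{R}_T$ of Lemma \ref{lem.smooth2}. Your blueprint (interior mollification with H\"ormander-type bounds, extension to the whole space, a separate boundary family, and a trace-lifting decomposition, your $R$ playing the role of $\mathcal{R}_T$) is indeed the classical route behind the quoted result, but as written the decisive boundary step is not secured. Your definition $S_\theta u:=\widetilde S^{\rm int}_\theta(u-Ru^b)+R\big(S^{\rm bdy}_\theta u^b\big)$ gives the two crucial properties (the trace of $S_\theta u$ determined by $u^b$ alone, hence \eqref{smooth.p2} and the implication ``$u=v$ on $\omega_T\Rightarrow S_\theta u=S_\theta v$ on $\omega_T$'') only if $\widetilde S^{\rm int}_\theta$ maps zero-trace functions to zero-trace functions \emph{exactly}, and neither of your devices achieves this at the required regularity: the odd extension in $x_2$ of a function with merely vanishing trace is in general not in $H^\alpha$ once $\alpha\ge 5/2$ (the second-order normal derivative jumps across $x_2=0$), so the interior bounds up to order $m\ge 4$ are lost; and a cutoff vanishing at $x_2=0$ either destroys \eqref{smooth.p1b} (with a fixed cutoff, $S_\theta u$ no longer converges to $u$ as $\theta\to\infty$) or, if $\theta$-dependent, pollutes \eqref{smooth.p1a}--\eqref{smooth.p1c} with extra powers of $\theta$. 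The standard way out—and the reason Lemma \ref{lem.smooth2} is stated right after the proposition—is not to force exact trace preservation but to correct the trace mismatch through the lifting operator, e.g. $S_\theta u:=\widetilde S_\theta u+\mathcal{R}_T\big(S^{\rm bdy}_\theta(u|_{x_2=0})-(\widetilde S_\theta u)|_{x_2=0}\big)$, estimating the correction by the trace and lifting bounds; the ``$+1$'' in \eqref{smooth.p2} is then natural. (Note also that your final ``reduction'' of \eqref{smooth.p2} to a bound on $S^{\rm bdy}_\theta w-w$ in terms of $\|w\|_{H^{\alpha-1/2}_\gamma(\omega_T)}$ does not match \eqref{smooth.p2}, whose right-hand side is the $H^{\alpha}_\gamma(\omega_T)$ norm of the boundary difference.)

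In addition, the mollifying kernel you prescribe cannot exist, and the causality is oriented the wrong way. If $\widehat\chi$ is compactly supported and $\equiv 1$ near the origin, then $\chi$ is analytic (Paley--Wiener) and cannot vanish on a half-space in $t$; moreover a nonnegative $\chi$ is incompatible with the vanishing of its higher moments forced by $\widehat\chi\equiv 1$ near $0$. What works, and suffices here since only $\alpha,\beta\in[1,m]$ are needed, is a compactly supported kernel with moments vanishing up to order $m$. Finally, the one-sided support must be $\supp\chi\subset\{t\ge 0\}$, not $\{t\le 0\}$: with your choice the convolution at time $t$ uses values of $u$ at times $\ge t$, so vanishing in the past is \emph{not} preserved and values of $u$ beyond $t=T$ (where it is not defined) would be required; with support in $\{t\ge 0\}$ only past values enter, $\mathcal{F}^\beta_\gamma(\Omega_T)$ is preserved, and no extension across $t=T$ is needed. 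These defects are repairable along the lines just indicated, but as written the construction does not exist and the boundary properties \eqref{smooth.p2} and the trace-equality statement are not established.
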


The proof of \eqref{smooth.p2} is based on the following lifting
operator, which will be also useful in the sequel of our analysis (see \cite[Chapter 5]{FM00MR1787068} and \cite{CS08MR2423311} for the proof).
\begin{lemma} \label{lem.smooth2}
	Let $T>0$ and $\gamma\geq 1$, and let $m\geq 1$ be an integer.
	Then there exists an operator $\mathcal{R}_T$, which is continuous from $\mathcal{F}_{\gamma}^s(\omega_T)$
	to $\mathcal{F}_{\gamma}^{s+1/2}(\Omega_T)$ for all $s\in [1,m]$,
	such that, if $s\geq 1$ and $u\in  \mathcal{F}_{\gamma}^s(\omega_T)$,
	then $(\mathcal{R}_T u)|_{x_2=0}=u$.
\end{lemma}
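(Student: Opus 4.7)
\medskip
\noindent\textbf{Proof plan for Lemma \ref{lem.smooth2}.}

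The plan is to reduce the construction to a standard half-space lifting in the tangential Fourier–Laplace variables, and to arrange matters so that causality (i.e.\ vanishing for $t<0$) is automatically preserved. To that end, I first pass to the exponentially weighted setting by writing $v(t,x_1):=\mathrm{e}^{-\gamma t}u(t,x_1)$; from the definition of $\|\cdot\|_{H^s_\gamma(\omega_T)}$ and the inclusion $\mathcal{F}^s_\gamma(\omega_T)\subset H^s_\gamma(\mathbb{R}^2)$ obtained by extending $u$ by zero across $t=T$ and Stein-reflecting (which is compatible with $u=0$ for $t<0$ since the reflection is across $t=T$), one obtains $v\in H^s(\mathbb{R}^2)$ with $v\equiv 0$ for $t<0$ and $\|v\|_{H^s(\mathbb{R}^2)}\lesssim \|u\|_{H^s_\gamma(\omega_T)}$.

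Next, I would define the lifting in Fourier–Laplace form. Let $\chi\in C^\infty_c(\mathbb{R})$ with $\chi(0)=1$, and set
\begin{equation*}
\widetilde{\mathcal{R}}v(t,x_1,x_2):=\mathcal{F}^{-1}_{(\eta,\xi_1)\to(t,x_1)}\big[\chi\big(x_2\Lambda_\gamma(\eta,\xi_1)\big)\,\hat v(\eta,\xi_1)\big],\qquad \Lambda_\gamma(\eta,\xi_1):=\sqrt{\gamma^2+\eta^2+\xi_1^2}\,,
\end{equation*}
and then put $\mathcal{R}_T u(t,x_1,x_2):=\mathrm{e}^{\gamma t}\widetilde{\mathcal{R}}v(t,x_1,x_2)$, finally restricting to $t<T$. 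The trace property is immediate because $\chi(0)=1$ gives $\widetilde{\mathcal{R}}v|_{x_2=0}=v$, hence $\mathcal{R}_Tu|_{x_2=0}=u$. For the regularity gain, the $H^{s+1/2}$ norm of $\widetilde{\mathcal{R}}v$ is
\begin{equation*}
\int_{\mathbb{R}^2}\!\!\int_0^\infty \big(\gamma^2+\eta^2+\xi_1^2\big)^{s+1/2}\big|\chi(x_2\Lambda_\gamma)\big|^2|\hat v|^2\,\d x_2\d\eta\d\xi_1=\int_{\mathbb{R}^2}\Lambda_\gamma^{2s}|\hat v|^2\,\d\eta\d\xi_1\cdot\|\chi\|_{L^2}^2\,,
\end{equation*}
after the change of variables $y=x_2\Lambda_\gamma$, which shows the continuity from $H^s_\gamma(\mathbb{R}^2)$ to $H^{s+1/2}_\gamma(\Omega)$. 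Derivative estimates up to order $s+1/2$ in the full $H^{s+1/2}_\gamma$-norm follow analogously by distributing tangential and normal derivatives onto $\chi(x_2\Lambda_\gamma)\hat v$ and using $|\partial^k_{x_2}\chi(x_2\Lambda_\gamma)|\lesssim \Lambda_\gamma^k$.

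The delicate point, and in my view the main obstacle, is preserving the support condition $\mathcal{R}_Tu=0$ for $t<0$. Because $v$ vanishes for $t<0$, its Fourier transform $\hat v(\eta,\xi_1)$ extends holomorphically in $\eta$ to the lower half-plane $\IM\eta<0$ with appropriate decay, and the symbol $\chi(x_2\Lambda_\gamma(\eta,\xi_1))$ extends holomorphically in that same half-plane provided $\chi$ is chosen to be (the restriction of) an entire function of exponential type zero in the relevant direction (for instance, $\chi(z)=\hat\rho(z)$ for a nonnegative even bump $\rho$ supported in $[-1,1]$). Contour shifting $\eta\mapsto\eta-i\gamma'$ with $\gamma'\downarrow 0$ and the Paley–Wiener theorem then force $\widetilde{\mathcal{R}}v(t,x_1,x_2)=0$ for $t<0$, so $\mathcal{R}_Tu\in \mathcal{F}^{s+1/2}_\gamma(\Omega_T)$. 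The remaining norm bound $\|\mathcal{R}_T u\|_{H^{s+1/2}_\gamma(\Omega_T)}\lesssim \|u\|_{H^s_\gamma(\omega_T)}$ is uniform in $\gamma\geq 1$ because all estimates above are performed at the level of $v=\mathrm{e}^{-\gamma t}u$ and depend only on $\Lambda_\gamma$, while interpolation in $s\in[1,m]$ finishes the proof. As indicated in the excerpt, this construction is due to Francheteau--M\'etivier \cite{FM00MR1787068} and was adapted to the present weighted causal setting in \cite{CS08MR2423311}.
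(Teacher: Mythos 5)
Your trace identity and the half-derivative gain computation are fine, but the step you yourself single out as delicate — causality — is exactly where the construction fails, and it cannot be repaired with the multiplier you chose. The symbol $\chi\big(x_2\Lambda_\gamma(\eta,\xi_1)\big)$ with $\chi=\hat\rho$, $\rho$ an even bump supported in $[-1,1]$, is even and real in the time frequency $\eta$ and entire of exponential type $x_2$ (not type zero: a nonzero square-integrable entire function of exponential type zero does not exist, since Paley--Wiener would force its inverse transform to be supported in $\{0\}$; so the hypothesis you need on $\chi$ is vacuous). Hence, for fixed $(\xi_1,x_2)$, its inverse Fourier transform in $t$ is an \emph{even} kernel supported in $[-x_2,x_2]$ and not concentrated at $t=0$; convolving it with $v$ supported in $\{t\ge 0\}$ generically produces a function that is nonzero for $-x_2<t<0$. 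The contour-shift/Paley--Wiener argument does not save this: to conclude support in $\{t\ge0\}$ you need the multiplier to remain bounded (uniformly on horizontal lines) as $\mathrm{Im}\,\eta\to-\infty$, whereas $\big|\chi\big(x_2\Lambda_\gamma(\eta-ic,\xi_1)\big)\big|$ grows like $\mathrm{e}^{c x_2}$ — which is precisely the Paley--Wiener signature of support in $[-x_2,\infty)$, not in $[0,\infty)$. So as written $\mathcal{R}_T u$ does not vanish in the past and does not land in $\mathcal{F}^{s+1/2}_\gamma(\Omega_T)$.

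For comparison, the paper does not prove the lemma at all: it refers to the construction of Francheteau--M\'etivier \cite{FM00MR1787068} (Chapter 5), used again in \cite{CS08MR2423311}. There causality is built in by taking a genuinely one-sided kernel instead of an even multiplier: for instance, lift by the tangential convolution $u\mapsto\int\phi(y_0,y_1)\,u(t-x_2y_0,\,x_1-x_2y_1)\,\mathrm{d}y$ with $\phi\ge0$, $\int\phi=1$, $\supp\phi\subset\{y_0\ge1\}$ (equivalently, a Fourier--Laplace multiplier holomorphic and bounded in $\{\mathrm{Re}\,\tau\ge0\}$). This reproduces the trace at $x_2=0$, shifts supports \emph{forward} in time (the lift even vanishes for $t<x_2$), commutes with the weight $\mathrm{e}^{-\gamma t}$ up to the harmless factor $\mathrm{e}^{-\gamma x_2y_0}\le1$ so all bounds are uniform in $\gamma\ge1$, and your own scaling argument (with $\hat\phi\big(x_2(\eta-i\gamma),x_2\xi_1\big)$ in place of $\chi(x_2\Lambda_\gamma)$, using its Schwartz decay in the real directions) still gives the gain of half a derivative and the estimates on normal derivatives. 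Replacing your even multiplier by such a causal kernel is the missing ingredient; the rest of your outline then goes through.
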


We are going to follow \cite{CS08MR2423311} and describe the iteration scheme for problem \eqref{P.new}.

\vspace*{2mm}
\noindent{\bf Assumption\;(A-1)}: \emph{$ (V_0,\Psi_0, \psi_0)=0$ and, for $k=0,\ldots,{n}$,
	$(V_k,\Psi_k,\psi_k)$ are already given  and verify}
\begin{align}\label{NM.H1}
(V_k,\Psi_k,\psi_k)|_{t<0}=0,\quad \Psi_k^{+}|_{x_2=0}=\Psi_k^{-}|_{x_2=0}=\psi_k.
\end{align}

Let us consider
\begin{align}\label{NM0}
V_{{n}+1}=V_{{n}}+\delta V_{{n}},\quad \Psi_{{n}+1}=\Psi_{{n}}+\delta \Psi_{{n}},
\quad\,\, \psi_{{n}+1}=\psi_{{n}}+\delta \psi_{{n}},
\end{align}
where these differences $\delta V_{{n}}$, $\delta \Psi_{{n}}$, and $\delta \psi_{{n}}$ will be specified below.

First we find $(\delta \dot{V}_{{n}},\delta\psi_{{n}})$ by solving the effective linear problem:
\begin{align} \label{effective.NM}
\left\{\begin{aligned}
&\mathbb{L}_e'(U^a+V_{{n}+1/2},\Phi^a+\Psi_{{n}+1/2})\delta \dot{V}_{{n}}=f_{{n}}
\qquad &&\textrm{in }\Omega_T,\\
& \mathbb{B}_e'(U^a+V_{{n}+1/2},\Phi^a+\Psi_{{n}+1/2})(\delta \dot{V}_{{n}},\delta\psi_{{n}})=g_{{n}}
\qquad &&\textrm{on }\omega_T,\\
& (\delta \dot{V}_{{n}},\delta\psi_{{n}})=0\qquad &&\textrm{for }t<0,
\end{aligned}\right.
\end{align}
where operators  $\mathbb{L}_e'$, $\mathbb{B}_e'$ are defined in \eqref{ELP.1}--\eqref{ELP.1.1}, \eqref{ELP.2}, respectively,
\begin{align} \label{good.NM}
\delta \dot{V}_{{n}}:=\delta V_{{n}}-\frac{\p_{x_2} (U^a+V_{{n}+1/2})}{\p_{x_2} (\Phi^a+\Psi_{{n}+1/2})}\delta\Psi_{{n}}
\end{align}
is the ``good unknown'' (\emph{cf.}\;\eqref{Alinhac}), and $(V_{{n}+1/2},\Psi_{{n}+1/2})$ is a smooth {\em modified state}
such that $(U^a+V_{{n}+1/2},\Phi^a+\Psi_{{n}+1/2})$ satisfies
constraints \eqref{bas.c1}--\eqref{bas.eq}.
The source term $(f_{{n}},g_{{n}})$ will be defined through the accumulated errors
at Step ${n}$ later on.

Define the modified state with $V_{{n}+1/2}^{\pm}=(p_{{n}+1/2}^{\pm},v_{{n}+1/2}^{\pm},u_{{n}+1/2}^{\pm},s_{{n}+1/2}^{\pm})^{\mathsf{T}}$ as
\begin{align} \label{modified}
\left\{\begin{aligned}
&\Psi_{{n}+1/2}^{\pm}:=S_{\theta_{{n}}}\Psi_{{n}}^{\pm},\quad v_{{n}+1/2}^{\pm}
:=S_{\theta_{{n}}}v_n^{\pm},\quad s_{{n}+1/2}^{\pm}
:=S_{\theta_{{n}}}s_n^{\pm},\\
&p_{{n}+1/2}^{\pm}:=S_{\theta_{{n}}}p_{{n}}^{\pm}\mp \tfrac{1}{2}\mathcal{R}_T\big(S_{\theta_{{n}}}p_{{n}}^{+}|_{x_2=0}
-S_{\theta_{{n}}}p_{{n}}^{-}|_{x_2=0}  \big),\\
&u_{{n}+1/2}^{\pm}:=\p_t\Psi_{{n}+1/2}^{\pm}+\big(v^{a\pm}+v_{{n}+1/2}^{\pm}  \big)\p_{x_1}\Psi_{{n}+1/2}^{\pm}
+v_{{n}+1/2}^{\pm}\p_{x_1}\Phi^{a\pm},
\end{aligned}\right.
\end{align}
where $S_{\theta_{{n}}}$ are the smoothing operators defined in Proposition \ref{pro.smooth} with
sequence $\{\theta_{{n}}\}$ given by
\begin{align} \label{theta}
\theta_0\geq 1,\qquad \theta_{{n}}=\sqrt{\theta^2_0+{n}},
\end{align}
and $\mathcal{R}_T$ is the lifting operator given in Lemma \ref{lem.smooth2}.
In light of Proposition \ref{pro.smooth} and \eqref{NM.H1}, we obtain
\begin{align} \label{modified.1}
\left\{\begin{aligned}
&\Psi_{{n}+1/2}^{+}|_{x_2=0}=\Psi_{{n}+1/2}^{-}|_{x_2=0}=:\psi_{{n}+1/2},&&\\
&p_{{n}+1/2}^{+}|_{x_2=0}=p_{{n}+1/2}^{-}|_{x_2=0},&&\quad\mbox{on}\,\,\,\omega_T\,,\\
&\mathcal{E}(V_{{n}+1/2},\Psi_{{n}+1/2})=0,&&\quad\mbox{in}\,\,\,\Omega_T\,,\\
&\big(V_{{n}+1/2},\Psi_{{n}+1/2},\psi_{{n}+1/2} \big)|_{t<0}=0.&&
\end{aligned}\right.
\end{align}
It then follows from \eqref{app} that $(U^a+V_{{n}+1/2},\Phi^a+\Psi_{{n}+1/2})$
satisfies \eqref{bas.eq.1} and \eqref{bas.eq.3}--\eqref{bas.eq.4}.
The additional constraint \eqref{bas.eq.2} will be obtained by taking the initial data small enough.

The errors at Step ${n}$ are defined from the following decompositions:
\begin{align}
\notag&\mathcal{L}(V_{{n}+1},\Psi_{{n}+1})-\mathcal{L}(V_{{n}},\Psi_{{n}})\\
\notag&\quad = \mathbb{L}'(U^a+V_{{n}},\Phi^a+\Psi_{{n}})(\delta V_{{n}},\delta\Psi_{{n}})+e_{{n}}'\\
\notag&\quad = \mathbb{L}'(U^a+S_{\theta_{{n}}}V_{{n}},\Phi^a+S_{\theta_{{n}}}\Psi_{{n}})(\delta V_{{n}},\delta\Psi_{{n}})+e_{{n}}'+e_{{n}}''\\
\notag&\quad = \mathbb{L}'(U^a+V_{{n}+1/2},\Phi^a+\Psi_{{n}+1/2})(\delta V_{{n}},\delta\Psi_{{n}})+e_{{n}}'+e_{{n}}''+e_{{n}}'''\\
\label{decom1}&\quad = \mathbb{L}_e'(U^a+V_{{n}+1/2},\Phi^a+\Psi_{{n}+1/2})\delta \dot{V}_{{n}}+e_{{n}}'+e_{{n}}''+e_{{n}}'''+D_{{n}+1/2} \delta\Psi_{{n}}
\end{align}
and
\begin{align}
\notag&\mathcal{B}(V_{{n}+1}|_{x_2=0},\psi_{{n}+1})-\mathcal{B}(V_{{n}}|_{x_2=0},\psi_{{n}})\\
\notag&\quad = \mathbb{B}'(U^a+V_{{n}},\Phi^a+\Psi_{{n}})(\delta V_{{n}}|_{x_2=0},\delta\psi_{{n}})+\tilde{e}_{{n}}'\\
\notag&\quad = \mathbb{B}'(U^a+S_{\theta_{{n}}}V_{{n}},\Phi^a+S_{\theta_{{n}}}\Psi_{{n}})(\delta V_{{n}}|_{x_2=0},\delta\psi_{{n}})+\tilde{e}_{{n}}'+\tilde{e}_{{n}}''\\
\label{decom2}&\quad =\mathbb{B}_e'(U^a+V_{{n}+1/2},\Phi^a+\Psi_{{n}+1/2})(\delta \dot{V}_{{n}}|_{x_2=0},\delta\psi_{{n}})+\tilde{e}_{{n}}'+\tilde{e}_{{n}}''+\tilde{e}_{{n}}''',
\end{align}
where we have set
\begin{align}\label{error.D}
D_{{n}+1/2}:=\frac{1}{\p_2(\Phi^a+\Psi_{{n}+1/2})}\p_2\mathbb{L}(U^a+V_{{n}+1/2},\Phi^a+\Psi_{{n}+1/2}),
\end{align}
and have used \eqref{Alinhac} to obtain the last identity in \eqref{decom1}.
Let us set
\begin{align} \label{e.e.tilde}
e_{{n}}:=e_{{n}}'+e_{{n}}''+e_{{n}}'''+D_{{n}+1/2} \delta\Psi_{{n}},\qquad
\tilde{e}_{{n}}:=\tilde{e}_{{n}}'+\tilde{e}_{{n}}''+\tilde{e}_{{n}}'''.
\end{align}

\smallskip
\noindent{\bf Assumption\;(A-2)}: \emph{$f_0:=S_{\theta_0}f^a$, $(e_0,\tilde{e}_0,g_0):=0$, and $(f_k,g_k,e_k,\tilde{e}_k)$
	are already given and vanish in the past for $k=0,\ldots,{n}-1$.}

\vspace*{2mm}
We compute the accumulated errors at Step ${n}$,  $n\geq 1$, by
\begin{align}  \label{E.E.tilde}
E_{{n}}:=\sum_{k=0}^{{n}-1}e_{k},\quad \widetilde{E}_{{n}}:=\sum_{k=0}^{{n}-1}\tilde{e}_{k}.
\end{align}
Then we compute $f_{{n}}$ and $g_{{n}}$ for ${n}\geq 1$  from the equations:
\begin{align} \label{source}
\sum_{k=0}^{{n}} f_k+S_{\theta_{{n}}}E_{{n}}=S_{\theta_{{n}}}f^a,
\qquad \sum_{k=0}^{{n}}g_k+S_{\theta_{{n}}}\widetilde{E}_{{n}}=0.
\end{align}

Under assumptions {\bf (A-1)}--{\bf (A-2)},
$(V_{{n}+1/2},\Psi_{{n}+1/2})$ and $(f_{{n}},g_{{n}})$ have been specified from \eqref{modified} and \eqref{source}.
Then we can obtain $(\delta \dot{V}_{{n}},\delta\psi_{{n}})$ as the solution of
the linear problem  \eqref{effective.NM} by applying Theorem \ref{thm:L}.

Now we need to construct $\delta\Psi_{{n}}=(\delta\Psi_{{n}}^+,\delta\Psi_{{n}}^-)^{\mathsf{T}}$
satisfying $\delta\Psi_{{n}}^{\pm}|_{x_2=0}=\delta\psi_{{n}}$.
From the explicit expression of the boundary conditions in \eqref{effective.NM} (\emph{cf.}\;\eqref{b.bar}--\eqref{M.bar} and \eqref{good.NM}), we observe that $\delta\psi_n$ solves
\begin{align*}
&\partial_t\delta\psi_n+(v^{a\,+}+v^+_{n+1/2})\vert_{x_2=0}\partial_{x_1}\delta\psi_n\\
&\qquad +\left\{\partial_{x_1}(\varphi^a+\psi_{n+1/2})\frac{\partial_{x_2}(v^{a\,+}+v^+_{n+1/2})\vert_{x_2=0}}{\partial_{x_2}(\Phi^{a\,+}+\Psi^+_{n+1/2})\vert_{x_2=0}}-\frac{\partial_{x_2}(u^{a\,+}+u^+_{n+1/2})\vert_{x_2=0}}{\partial_{x_2}(\Phi^{a\,+}+\Psi^+_{n+1/2})\vert_{x_2=0}}\right\}\delta\psi_n\\
&\qquad +\partial_{x_1}(\varphi^a+\psi_{n+1/2})(\delta\dot v^+_n)\vert_{x_2=0}-(\delta\dot u^+_{n})\vert_{x_2=0}=g_{n,2} ,
\end{align*}
\begin{align*}
&\partial_t\delta\psi_n+(v^{a\,-}+v^-_{n+1/2})\vert_{x_2=0}\partial_{x_1}\delta\psi_n\\
&\qquad +\left\{\partial_{x_1}(\varphi^a+\psi_{n+1/2})\frac{\partial_{x_2}(v^{a\,-}+v^-_{n+1/2})\vert_{x_2=0}}{\partial_{x_2}(\Phi^{a\,-}+\Psi^-_{n+1/2})\vert_{x_2=0}}-\frac{\partial_{x_2}(u^{a\,-}+u^-_{n+1/2})\vert_{x_2=0}}{\partial_{x_2}(\Phi^{a\,-}+\Psi^-_{n+1/2})\vert_{x_2=0}}\right\}\delta\psi_n\\
&\qquad+\partial_{x_1}(\varphi^a+\psi_{n+1/2})(\delta\dot v^-_n)\vert_{x_2=0}-(\delta\dot u^-_{n})\vert_{x_2=0}=g_{n,2}-g_{n,1} .
\end{align*}
These identities suggest us to define $\delta\Psi^\pm_n$ as solutions to the following equations:
\begin{align}
\notag &\partial_t\delta\Psi^+_n+(v^{a\,+}+v^+_{n+1/2})\partial_{x_1}\delta\Psi^+_n\\
\notag &\qquad +\left\{\partial_{x_1}(\Phi^{a\,+}+\Psi^+_{n+1/2})\frac{\partial_{x_2}(v^{a\,+}+v^+_{n+1/2})}{\partial_{x_2}(\Phi^{a\,+}+\Psi^+_{n+1/2})}-\frac{\partial_{x_2}(u^{a\,+}+u^+_{n+1/2})}{\partial_{x_2}(\Phi^{a\,+}+\Psi^+_{n+1/2})}\right\}\delta\Psi^+_n\\
&\qquad +\partial_{x_1}(\Phi^{a\,+}+\Psi^+_{n+1/2})\delta\dot v^+_n-\delta\dot u^+_{n}=\mathcal R_T g_{n,2}+h^+_n , \label{delta_Psi_n+}\\[2mm]
\notag &\partial_t\delta\Psi^-_n+(v^{a\,-}+v^-_{n+1/2})\partial_{x_1}\delta\Psi^-_n\\
\notag &\qquad+\left\{\partial_{x_1}(\Phi^{a\,-}+\Psi^-_{n+1/2})\frac{\partial_{x_2}(v^{a\,-}+v^-_{n+1/2})}{\partial_{x_2}(\Phi^{a\,-}+\Psi^-_{n+1/2})}-\frac{\partial_{x_2}(u^{a\,-}+u^-_{n+1/2})}{\partial_{x_2}(\Phi^{a\,-}+\Psi^-_{n+1/2})}\right\}\delta\Psi^-_n\\
&\qquad +\partial_{x_1}(\Phi^{a\,-}+\Psi^-_{n+1/2})\delta\dot v^-_n-\delta\dot u^-_{n}=\mathcal R_T (g_{n,2}-g_{n,1})+h^-_n ,\label{delta_Psi_n-}
\end{align}
where $h^\pm_n$ are suitable source terms on $\Omega_T$, vanishing in the past and with zero traces on the boundary $\omega_T$, whose explicit form will be specified below.

In order to compute the values of $h^\pm_n$, let us make for the operator $\mathcal E$, defined in \eqref{P.new}, a decomposition similar to \eqref{decom1}. Namely we have
\begin{align}
\mathcal{E}(V_{{n}+1},\Psi_{{n}+1})-\mathcal{E}(V_{{n}},\Psi_{{n}})
&= \mathcal{E}'(V_{{n}},\Psi_{{n}})(\delta V_{{n}},\delta\Psi_{{n}})+\hat{e}_{{n}}' \notag  \\
&= \mathcal{E}'(S_{\theta_{{n}}}V_{{n}},S_{\theta_{{n}}}\Psi_{{n}})(\delta V_{{n}},\delta\Psi_{{n}})
+\hat{e}_{{n}}'+\hat{e}_{{n}}''\notag  \\
&= \mathcal{E}'(V_{{n}+1/2},\Psi_{{n}+1/2})(\delta V_{{n}},\delta\Psi_{{n}})+\hat{e}_{{n}}'
+\hat{e}_{{n}}''+\hat{e}_{{n}}'''. \label{decom3}
\end{align}
Let us denote
\begin{align} \label{e.hat}
\hat{e}_{{n}}:=\hat{e}_{{n}}'+\hat{e}_{{n}}''+\hat{e}_{{n}}''',\qquad \hat{E}_{{n}}:=\sum_{k=0}^{{n}-1}\hat{e}_{k}.
\end{align}
From \eqref{app.1}, we have
\begin{align*}
\mathcal{E}(V,\Psi)=\p_t(\Phi^a+\Psi)+(v^a+v)\partial_1(\Phi^a+\Psi)-(u^a+u)\,.
\end{align*}
Making the change of good unknown \eqref{good.NM} implies that $\mathcal{E}'(V_{{n}+1/2}^{\pm},\Psi_{{n}+1/2}^{\pm})(\delta V_{{n}}^{\pm},\delta\Psi_{{n}}^{\pm})$
are equal to the left-hand sides of  \eqref{delta_Psi_n+} and \eqref{delta_Psi_n-}, respectively.
Then it follows from \eqref{delta_Psi_n+}--\eqref{decom3}
that
\begin{align}\notag 
\mathcal{E}(V_{{n}+1},\Psi_{{n}+1})-\mathcal{E}(V_{{n}},\Psi_{{n}}) =\begin{pmatrix}
\mathcal{R}_Tg_{{n},2}+h_{{n}}^++\hat{e}_{{n}}^+\\ \mathcal{R}_T(g_{{n},2}-g_{{n},1})+h_{{n}}^-+\hat{e}_{{n}}^-
\end{pmatrix}.
\end{align}
Summing these relations and using $\mathcal{E}(V_0,\Psi_0)=0$, we get
\begin{align*}
&\mathcal{E}(V_{{n}+1}^+,\Psi_{{n}+1}^+)=\mathcal{R}_T\Big(\sum_{k=0}^{{n}}g_{k,2}\Big)+\sum_{k=0}^{{n}}h_{k}^++\hat{E}_{{n}+1}^+,\\&
\mathcal{E}(V_{{n}+1}^-,\Psi_{{n}+1}^-)=\mathcal{R}_T\Big(\sum_{k=0}^{{n}}(g_{k,2}-g_{k,1})\Big)+\sum_{k=0}^{{n}}h_{k}^-+\hat{E}_{{n}+1}^-.
\end{align*}
On the other hand, we obtain from \eqref{effective.NM} and \eqref{decom2} that
\begin{align} \label{decom2.b}
g_{{n}}=\mathcal{B}(V_{{n}+1}|_{x_2=0},\psi_{{n}+1})-\mathcal{B}(V_{{n}}|_{x_2=0},\psi_{{n}})-\tilde{e}_{{n}}.
\end{align}
In view of \eqref{P.new} and \eqref{B.def},  one obtains the relations:
\begin{align}
\big(\mathcal{B}(V_{{n}+1}|_{x_2=0},\psi_{{n}+1})\big)_2
&=\mathcal{E}(V_{{n}+1}^+|_{x_2=0},\psi_{{n}+1})\notag  \\
&=\mathcal{E}(V_{{n}+1}^-|_{x_2=0},\psi_{{n}+1})+\big(\mathcal{B}(V_{{n}+1}|_{x_2=0},\psi_{{n}+1})\big)_1. \label{B.E.relation}
\end{align}
Summing \eqref{decom2.b} and using $\mathcal{B}(V_{0}|_{x_2=0},\psi_{0})=0$, we have
\begin{align} \label{decom3.c1}
\mathcal{E}(V_{{n}+1}^-,\Psi_{{n}+1}^-)=\mathcal{R}_T\Big(\mathcal{E}\big(V_{{n}+1}^-|_{x_2=0},\psi_{{n}+1}\big)
-\widetilde{E}_{{n}+1,2}+\widetilde{E}_{{n}+1,1}\Big)+\sum_{k=0}^{{n}}h_{k}^-+\hat{E}_{{n}+1}^-.
\end{align}
Similarly, we can also obtain
\begin{align} \label{decom3.c2}
\mathcal{E}(V_{{n}+1}^+,\Psi_{{n}+1}^+)
=\mathcal{R}_T\Big(\mathcal{E}\big(V_{{n}+1}^+|_{x_2=0},\psi_{{n}+1}\big)
-\widetilde{E}_{{n}+1,2}\Big)+\sum_{k=0}^{{n}}h_{k}^++\hat{E}_{{n}+1}^+.
\end{align}

\vspace*{1mm}
\noindent{\bf Assumption\;(A-3)}: \emph{$(h_0^+,h_0^-,\hat{e}_0)=0$, and $(h_k^+,h_k^-,\hat{e}_k)$ are already given
	and vanish in the past for $k=0,\ldots,{n}-1$.}

\vspace*{2mm}
Under assumptions {{\bf (A-1)}}--{\bf (A-3)}, taking into account  \eqref{decom3.c1}--\eqref{decom3.c2}
and the property of $\mathcal{R}_T$,
we compute the source terms $h_{{n}}^{\pm}$ from
\begin{subequations} \label{source2}
	\begin{alignat}{1}
	&S_{\theta_{{n}}}\big(\hat{E}_{{n}}^+-\mathcal{R}_T\widetilde{E}_{{n},2}\big)+\sum_{k=0}^{{n}}h_{k}^+=0,\\
	&S_{\theta_{{n}}}\big(\hat{E}_{{n}}^--\mathcal{R}_T\widetilde{E}_{{n},2}+\mathcal{R}_T\widetilde{E}_{{n},1}\big)
	+\sum_{k=0}^{{n}}h_{k}^-=0.
	\end{alignat}
\end{subequations}

By virtue of assumption {\bf (A-3)} and the properties of $S_{\theta}$, it is clear that $h_{{n}}^{\pm}$ vanish in the past.
As in \cite{FM00MR1787068}, one can also check that the trace of $h_{{n}}^{\pm}$ on $\omega_T$ vanishes.
Hence, we can find $\delta\Psi_{{n}}^{\pm}$, vanishing in the past and satisfying $\delta\Psi_{{n}}^{\pm}|_{x_2=0}=\delta\psi_{{n}}$,
as the unique smooth solutions to the transport equations \eqref{delta_Psi_n+}--\eqref{delta_Psi_n-}.

Once $\delta\Psi_{{n}}$ is specified, we can obtain $\delta V_{{n}}$ from \eqref{good.NM}
and $(V_{{n}+1},\Psi_{{n}+1},\psi_{{n}+1})$ from \eqref{NM0}.
Then the errors: $e_{{n}}'$, $e_{{n}}''$, $e_{{n}}'''$, $\tilde{e}_{{n}}'$, $\tilde{e}_{{n}}''$,
$\tilde{e}_{{n}}'''$, $\hat{e}_{{n}}'$, $\hat{e}_{{n}}''$, and $\hat{e}_{{n}}'''$
are computed from \eqref{decom1}--\eqref{decom2} and \eqref{decom3},
while $e_{{n}}$, $\tilde{e}_{{n}}$, and $\hat{e}_{{n}}$ are obtained from \eqref{e.e.tilde} and \eqref{e.hat}.

Using \eqref{effective.NM} and \eqref{source},
we sum \eqref{decom1}--\eqref{decom2.b} from ${n}=0$ to $m$, respectively, to obtain
\begin{align}
\label{conv.1}&\mathcal{L}(V_{m+1},\Psi_{m+1})=\sum_{{n}=0}^{m}f_{{n}}+E_{m+1}=S_{\theta_{m}}f^a+(I-S_{\theta_{m}})E_{m}+e_m,\\
\label{conv.2}&\mathcal{B}(V_{m+1}|_{x_2=0},\psi_{m+1})=\sum_{{n}=0}^{m}g_{{n}}+\widetilde{E}_{m+1}=(I-S_{\theta_{m}})\widetilde{E}_{m}+\tilde{e}_m.
\end{align}
Plugging \eqref{source2} into \eqref{decom3.c1}--\eqref{decom3.c2}, we utilize \eqref{B.E.relation} to deduce
\begin{align}\label{conv.3}
\left\{\begin{aligned}
\mathcal{E}(V_{m+1}^-,\Psi_{m+1}^-)=&\mathcal{R}_T\big(\big(\mathcal{B}(V_{{m}+1}|_{x_2=0},\psi_{{m}+1})\big)_2
-\big(\mathcal{B}(V_{{m}+1}|_{x_2=0},\psi_{{m}+1})\big)_1\big)\\
&+(I-S_{\theta_{m}})\big( \hat{E}_m^--\mathcal{R}_T\big(\widetilde{E}_{m,2}-\widetilde{E}_{m,1}\big) \big)
+\hat{e}_{m}^--\mathcal{R}_T\big(\tilde{e}_{m,2}-\tilde{e}_{m,1}\big),\\
\mathcal{E}(V_{m+1}^+,\Psi_{m+1}^+)=&\mathcal{R}_T\big(\big(\mathcal{B}(V_{{n}+1}|_{x_2=0},\psi_{{n}+1})\big)_2\big)\\
& +(I-S_{\theta_{m}})\big( \hat{E}_m^+-\mathcal{R}_T\widetilde{E}_{m,2}\big)
+\hat{e}_{m}^+-\mathcal{R}_T\tilde{e}_{m,2}.
\end{aligned}\right.
\end{align}
Since $S_{\theta_{m}}\to I$ as $m\to \infty$, we can formally obtain the solution to problem \eqref{P.new}
from $\mathcal{L}(V_{m+1},\Psi_{m+1})\to f^a$, $\mathcal{B}(V_{m+1}|_{x_2=0},\psi_{m+1})\to 0$,
and $\mathcal{E}(V_{m+1},\Psi_{m+1})\to 0$, provided that the errors: $(e_m,\tilde{e}_m,\hat{e}_m)\to 0$.

\subsection{Inductive assumption}
Given a constant $\varepsilon>0$ and an integer $\tilde{\alpha}$ that will be chosen later on,
we assume that {{\bf (A-1)}}--{\bf (A-3)} are satisfied and that the following estimate holds:
\begin{align} \label{small}
\big\|\dot{U}^a\big\|_{H^{\tilde{\alpha}+3}_{\gamma}(\Omega_T)}+\big\|\dot{\Phi}^a\big\|_{H^{\tilde{\alpha}+4}_{\gamma}(\Omega_T)}
+\big\|{\varphi}^a\big\|_{H^{\tilde{\alpha}+7/2}_{\gamma}(\Omega_T)}+\big\|f^a\big\|_{H^{\tilde{\alpha}+2}_{\gamma}(\Omega_T)}
\leq \varepsilon.
\end{align}
Fixing another integer $\alpha$, our inductive assumption reads:
\begin{align*}
(H_{{n}-1})\ \left\{\begin{aligned}
\textrm{(a)}\,\,  &\|(\delta V_k,\delta \Psi_k)\|_{H^{s}_{\gamma}(\Omega_T)}+\|\delta\psi_k\|_{H^{s+1}_{\gamma}(\omega_T)}\leq \varepsilon \theta_k^{s-\alpha-1}\Delta_k\\
&\quad \textrm{for all } k=0,\ldots,{n}-1\textrm{ and }s\in[3,\tilde{\alpha}]\cap\mathbb{N};\\
\textrm{(b)}\,\, &\|\mathcal{L}( V_k,  \Psi_k)-f^a\|_{H^{s}_{\gamma}(\Omega_T)}\leq 2 \varepsilon \theta_k^{s-\alpha-1}\\
&\quad \textrm{for all } k=0,\ldots,{n}-1\textrm{ and } s\in[3,\tilde{\alpha}-2]\cap\mathbb{N};\\
\textrm{(c)}\,\,  &\|\mathcal{B}( V_k|_{x_2=0},  \psi_k)\|_{H^{s}_{\gamma}(\omega_T)}\leq  \varepsilon \theta_k^{s-\alpha-1}\\
&\quad \textrm{for all } k=0,\ldots,{n}-1\textrm{ and } s\in[4,\alpha]\cap\mathbb{N};\\
\textrm{(d)}\,\,  &\|\mathcal{E}( V_k,  \Psi_k)\|_{H^{3}_{\gamma}(\Omega_T)}\leq  \varepsilon \theta_k^{2-\alpha}\quad\textrm{for all } k=0,\ldots,{n}-1,
\end{aligned}\right.
\end{align*}
where $\Delta_{k}:=\theta_{k+1}-\theta_k$ with $\theta_k$ defined by \eqref{theta}.
Notice that
\begin{align*}
\frac{1}{3\theta_k}\leq \Delta_{k}=\sqrt{\theta_k^2+1}-\theta_k\leq \frac{1}{2\theta_k}\qquad\textrm{for all }k\in\mathbb{N}.
\end{align*}
In particular, sequence $(\Delta_{k})$ is decreasing and tends to $0$.
Our goal is to show that, for a suitable choice of parameters $\theta_0\geq 1$ and $\varepsilon>0$, and for $f^a$ small enough,
($H_{{n}-1}$) implies ($H_{{n}}$) and that ($H_0$) holds.
Once this goal is achieved, we infer that ($H_{{n}}$) holds for all ${n}\in \mathbb{N}$,
which enables us to conclude the proof of Theorem \ref{thm}.

From now on, we assume that ($H_{{n}-1}$) holds.
As in \cite{CS08MR2423311}, assumption ($H_{{n}-1}$) implies the following lemma.

\begin{lemma}\label{lem.triangle}
	If $\theta_0$ is large enough, then, for each $k=0,\ldots,{n}$ and each integer $s\in [3,\tilde{\alpha}]$,
	\begin{align}
	\label{tri1}&\|( V_k, \Psi_k)\|_{H^{s}_{\gamma}(\Omega_T)}+\|\psi_k\|_{H^{s+1}_{\gamma}(\omega_T)}
	\leq
	\left\{\begin{aligned}
	&\varepsilon \theta_k^{(s-\alpha)_+}\quad &&\textrm{if }s\neq \alpha,\\
	&\varepsilon \log \theta_k\quad &&\textrm{if }s= \alpha,
	\end{aligned}\right.\\
	\label{tri2}&\|( (I-S_{\theta_k})V_k, (I-S_{\theta_k})\Psi_k)\|_{H^{s}_{\gamma}(\Omega_T)}
	\leq C\varepsilon \theta_k^{s-\alpha}.
	\end{align}
	Furthermore, for each $k=0,\ldots,{n}$, and each integer $s\in [3,\tilde{\alpha}+4]$,
	one has
	\begin{align}
	\label{tri3}&\|( S_{\theta_k}V_k, S_{\theta_k}\Psi_k)\|_{H^{s}_{\gamma}(\Omega_T)}\leq
	\left\{\begin{aligned}
	&C\varepsilon \theta_k^{(s-\alpha)_+}\quad &&\textrm{if }s\neq \alpha,\\
	&C\varepsilon \log \theta_k\quad &&\textrm{if }s= \alpha.
	\end{aligned}\right.
	\end{align}
\end{lemma}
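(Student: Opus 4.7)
The plan is to prove \eqref{tri1}--\eqref{tri3} by a standard telescoping argument from the Nash--Moser toolbox, relying on assumption $(H_{n-1})$(a) together with the smoothing estimates of Proposition \ref{pro.smooth}. Since $(V_k,\Psi_k,\psi_k)=\sum_{j=0}^{k-1}(\delta V_j,\delta\Psi_j,\delta\psi_j)$ by \eqref{NM0} and $(V_0,\Psi_0,\psi_0)=0$, the triangle inequality combined with $(H_{n-1})$(a) gives, for $s\in[3,\tilde\alpha]\cap\mathbb{N}$,
\begin{equation*}
\|(V_k,\Psi_k)\|_{H^{s}_{\gamma}(\Omega_T)}+\|\psi_k\|_{H^{s+1}_{\gamma}(\omega_T)}\leq \varepsilon\sum_{j=0}^{k-1}\theta_j^{s-\alpha-1}\Delta_j.
\end{equation*}
The first step is to estimate this dyadic-type sum by an integral comparison. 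Since $\Delta_j\sim \theta_j^{-1}$ and $\theta_j$ is increasing to infinity, one has $\sum_{j=0}^{k-1}\theta_j^{s-\alpha-1}\Delta_j \lesssim \int_{\theta_0}^{\theta_k}\tau^{s-\alpha-1}\d\tau$, up to a harmless constant.

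The second step is to split into the three regimes $s<\alpha$, $s=\alpha$, $s>\alpha$. If $s>\alpha$, the integral is $\lesssim \theta_k^{s-\alpha}/(s-\alpha)$, yielding the bound $\varepsilon\theta_k^{s-\alpha}=\varepsilon\theta_k^{(s-\alpha)_+}$ up to absorbing the constant by taking $\varepsilon_0(\cdot)$ small (or equivalently, the constant is bounded by a quantity independent of $k$ after taking the supremum over $s$ in the finite range $[3,\tilde\alpha]\cap\mathbb{N}$). If $s=\alpha$, the integral equals $\log(\theta_k/\theta_0)\lesssim\log\theta_k$, which matches the claim. The delicate case is $s<\alpha$: the integral is bounded by $\theta_0^{s-\alpha}/(\alpha-s)$, which is independent of $k$ but \emph{positive}. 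To obtain the required bound $\varepsilon\theta_k^{(s-\alpha)_+}=\varepsilon$, we need the implicit constant times $\theta_0^{s-\alpha}/(\alpha-s)$ to be no larger than $1$, uniformly over $s\in[3,\alpha)\cap\mathbb{N}$. This is the main (and only real) obstacle, and it is precisely the reason for the assumption ``if $\theta_0$ is large enough'' in the statement: choosing $\theta_0$ sufficiently large (depending on $\alpha$ and the constants from $(H_{n-1})$) absorbs this factor and yields \eqref{tri1}.

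For \eqref{tri2}, I would invoke property \eqref{smooth.p1b} with $\beta=s$ and $\alpha'=\tilde\alpha$ (assuming $\tilde\alpha\ge\alpha$, as will be the case when $\tilde\alpha$ is chosen large enough in the sequel), writing
\begin{equation*}
\|(I-S_{\theta_k})(V_k,\Psi_k)\|_{H^{s}_{\gamma}(\Omega_T)}\leq C\theta_k^{s-\tilde\alpha}\|(V_k,\Psi_k)\|_{H^{\tilde\alpha}_{\gamma}(\Omega_T)}\leq C\varepsilon\theta_k^{s-\tilde\alpha}\cdot\theta_k^{\tilde\alpha-\alpha}=C\varepsilon\theta_k^{s-\alpha},
\end{equation*}
where the middle inequality uses the just-established \eqref{tri1} at level $\tilde\alpha$. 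Finally, for \eqref{tri3} I would use \eqref{smooth.p1a}: in the range $s\in[3,\tilde\alpha]\cap\mathbb{N}$, apply it with the same $H^{s}$--norm on $(V_k,\Psi_k)$ directly from \eqref{tri1}; and in the extended range $s\in(\tilde\alpha,\tilde\alpha+4]\cap\mathbb{N}$, apply \eqref{smooth.p1a} with $\beta=s$, $\alpha'=\tilde\alpha$ to gain $\theta_k^{s-\tilde\alpha}$ and then use \eqref{tri1} at level $\tilde\alpha$ as above. The logarithmic case $s=\alpha$ is handled identically. All bounds transfer to $\psi_k$ on the boundary via the analogous boundary smoothing operators from Proposition \ref{pro.smooth}, and the claim follows.
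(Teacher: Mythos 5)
Your argument is correct and is essentially the paper's own route: the paper simply defers to Coulombel--Secchi \cite{CS08MR2423311}, where \eqref{tri1} is obtained exactly by telescoping $(V_k,\Psi_k,\psi_k)=\sum_{j<k}(\delta V_j,\delta\Psi_j,\delta\psi_j)$, invoking $(H_{n-1})$(a), comparing the resulting series with an integral in the three regimes $s<\alpha$, $s=\alpha$, $s>\alpha$ (with $\theta_0$ large absorbing the $s<\alpha$ case), and \eqref{tri2}--\eqref{tri3} follow from \eqref{smooth.p1b} and \eqref{smooth.p1a} combined with \eqref{tri1} at level $\tilde\alpha$, just as you do. The only small imprecision is your remark about ``absorbing the constant by taking $\varepsilon_0(\cdot)$ small'' for $s\geq\alpha$: since the same $\varepsilon$ appears on both sides, the constant-free bounds in \eqref{tri1} come instead from the careful series--integral comparison itself (e.g.\ $\Delta_j\leq\tfrac12\theta_j^{-1}$ gives $\sum_{j<k}\theta_j^{s-\alpha-1}\Delta_j\leq\theta_k^{s-\alpha}/(s-\alpha)\leq\theta_k^{s-\alpha}$ for $s>\alpha$ and $\leq\log\theta_k$ for $s=\alpha$ once $\theta_0\geq\sqrt2$), which is a refinement, not a change, of your argument.
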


\subsection{Estimate of error terms}
To deduce ($H_{{n}}$) from ($H_{{n}-1}$), we need to estimate the quadratic errors $e'_{k}$,
$\tilde{e}_{k}'$, and $\hat{e}_{k}'$,
the first substitution errors $e_{k}''$, $\tilde{e}_{k}''$, and $\hat{e}_{k}''$,
the second substitution errors $e_{k}'''$, $\tilde{e}_{k}'''$, and $\hat{e}_{k}'''$, and the last error $D_{k+1/2} \delta\Psi_{k}$.
Recall from \eqref{decom1} that
\begin{align*}
&e_k'=\mathcal{L}(V_{k+1},\Psi_{k+1})-\mathcal{L}(V_{k},\Psi_{k})-\mathbb{L}'(U^a+V_{k},\Phi^a+\Psi_{k})(\delta V_{k},\delta\Psi_{k}),
\end{align*}
which can be rewritten as
\begin{align}
\label{error.1a}
&e_k'=\int_{0}^{1}(1-\tau)\mathbb{L}''(U^a+V_{k}+\tau \delta  V_{k},\Phi^a+\Psi_{k}+\tau \delta \Psi_{k})
\big((\delta V_{k},\delta\Psi_{k}),(\delta V_{k},\delta\Psi_{k})\big)\d\tau,
\end{align}
where operator $\mathbb{L}''$ is defined by
\begin{align*}
\mathbb{L}''(U,\Phi)\big((V',\Psi'),(V'',\Psi'')\big)
:=\left.\frac{\d}{\d \tau} \mathbb{L}'(U+\tau V'',\Phi+\tau \Psi'')(V',\Psi')\right|_{\tau=0},
\end{align*}
with operator $\mathbb{L}'$ given in \eqref{L.prime}.
We can also obtain a similar expression for $\tilde{e}_{k}'$ (resp.\;$\hat{e}_{k}'$)
defined by \eqref{decom2} (resp.\;\eqref{decom3}) in terms of the second derivative
operator $\mathbb{B}''$ (resp.\;$\mathcal{E}''$).

To control the quadratic errors, we need the following estimates for operators $\mathbb{L}''$, $\mathbb{B}''$, and $\mathcal{E}''$ (\emph{cf.}\;\eqref{error.1a}).
This can be achieved from the explicit forms of $\mathbb{L}''$, $\mathbb{B}''$, and $\mathcal{E}''$
by applying the Moser-type and Sobolev embedding inequalities.
We refer to \cite[Proposition 5]{CS08MR2423311} for the detailed proof which is omitted here for brevity.
\begin{proposition}\label{pro.tame2}
	Let $T>0$ and $s\in\mathbb{N}$ with $s\geq3$.
	Assume that $(\tilde{U},\tilde{\Phi})\in H^{s+1}_{\gamma}(\Omega_T)$
	and $\|(\tilde{U},\tilde{\Phi} )\|_{H_{\gamma}^{3}(\Omega_T)}\leq \widetilde{K}$ for all $\gamma\geq1$.
	Then there exist two positive constants $\widetilde{K}_0$ and $C$, which are independent of $T$ and $\gamma$, such that,
	if $\widetilde{K} \leq \widetilde{K}_0$, $\gamma\geq1$, and $(V_1,\Psi_1),(V_2,\Psi_2)\in H^{s+1}_{\gamma}(\Omega_T)$, then
	\begin{align}
	\notag&\big\|\mathbb{L}''\big(U^a+\tilde{U},\Phi^a+\tilde{\Phi} \big)\big((V_1,\Psi_1),(V_2,\Psi_2) \big)
\big\|_{H^{s}_{\gamma}(\Omega_T)}\\
	\notag&\quad \leq C\Big\{ \|(V_1,\Psi_1)\|_{W^{1,\infty}(\Omega_T)}\|(V_2,\Psi_2)\|_{W^{1,\infty}(\Omega_T)}
	\big\|\big(\tilde{U}+\dot{U}^a,\tilde{\Phi}+\dot{\Phi}^a\big)\big\|_{H^{s+1}_{\gamma}(\Omega_T)}\\
	\notag&\qquad\qquad  +\sum_{i\neq j}\|(V_i,\Psi_i)\|_{H^{s+1}_{\gamma}(\Omega_T)} \|(V_j,\Psi_j)\|_{W^{1,\infty}(\Omega_T)} \Big\},
	\end{align}
	and
		\begin{align}
	\notag&\big\|\mathcal{E}''\big(U^a+\tilde{U},\Phi^a+\tilde{\Phi} \big)\big((V_1,\Psi_1),(V_2,\Psi_2) \big)\big\|_{H^{s}_{\gamma}(\Omega_T)}\\\
	\notag&\quad \leq C\sum_{i\neq j}\Big\{ \|V_i\|_{L^{\infty}(\Omega_T)}\|\Psi_j\|_{H^{s+1}_{\gamma}(\Omega_T)}
	+\|V_i\|_{H^{s}_{\gamma}(\Omega_T)}	\|\Psi_j\|_{W^{1,\infty}(\Omega_T)} \Big\}.
	\end{align}
	Moreover, if $(W_1,\psi_1),(W_2,\psi_2)\in H^{s}_{\gamma}(\omega_T)\times H^{s+1}_{\gamma}(\omega_T)$, then
		\begin{align}
	\notag&\big\|\mathbb{B}''\big(U^a+\tilde{U},\Phi^a+\tilde{\Phi} \big)\big((W_1,\psi_1),(W_2,\psi_2) \big)\big\|_{H^{s}_{\gamma}(\omega_T)}\\
	\notag&\quad \leq C\sum_{i\neq j}\Big\{ \|W_i\|_{L^{\infty}(\omega_T)}\|\psi_j\|_{H^{s+1}_{\gamma}(\omega_T))}
	+\|W_i\|_{H^{s}_{\gamma}(\omega_T)}	\|\psi_j\|_{W^{1,\infty}(\omega_T)} \Big\}.
	\end{align}
\end{proposition}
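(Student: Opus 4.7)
The strategy is to write out each of the second-variation operators $\mathbb{L}''$, $\mathcal{E}''$, $\mathbb{B}''$ in closed form and then treat each resulting term by combining the tame Moser-type and Gagliardo--Nirenberg inequalities recalled from \cite{CS08MR2423311} with the Sobolev embedding $H^2(\Omega_T)\hookrightarrow L^\infty(\Omega_T)$. The smallness hypothesis $\|(\tilde U,\tilde\Phi)\|_{H^3_\gamma}\le\widetilde K_0$ plays a double role: via Sobolev embedding it furnishes the $W^{1,\infty}$-smallness of $(\tilde U,\tilde\Phi)$ needed to keep the arguments of the nonlinear coefficient matrices in a fixed compact set, and in particular it guarantees (together with \eqref{app2}) that $\partial_{x_2}(\Phi^a+\tilde\Phi)$ stays bounded away from zero so that the rational expressions in $\widetilde A_2$ and $\mathcal C$ remain smooth functions of their arguments.

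First I would compute $\mathbb{L}''$. Using \eqref{L.def}, \eqref{L.prime}, \eqref{C.cal} and the dependence of $\widetilde A_2(U,\Phi)$ on $(U,\nabla\Phi,1/\partial_{x_2}\Phi)$, the bilinear form $\mathbb{L}''(U^a+\tilde U,\Phi^a+\tilde\Phi)((V_1,\Psi_1),(V_2,\Psi_2))$ decomposes into a finite sum of terms of the following generic types: \emph{(i)} $\mathcal{H}(U^a+\tilde U,\nabla(\Phi^a+\tilde\Phi))[(V_1,\Psi_1)\otimes(V_2,\Psi_2)]\cdot \nabla(U^a+\tilde U)$ and its analogue with $\nabla^2(\Phi^a+\tilde\Phi)$ in place of $\nabla(U^a+\tilde U)$, where $\mathcal H$ is a smooth tensor-valued function of its arguments; \emph{(ii)} the symmetric family $\mathcal{H}(U^a+\tilde U,\nabla(\Phi^a+\tilde\Phi))[(V_i,\Psi_i)]\cdot\nabla(V_j,\Psi_j)$, in which the extra derivative has been shifted onto one of the variations; \emph{(iii)} lower-order bilinear contributions with no derivative on the base state, which can be absorbed into type \emph{(i)}. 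To each such term I would apply the tame product inequality
\[
\|fg\|_{H^s_\gamma(\Omega_T)}\lesssim \|f\|_{L^\infty(\Omega_T)}\|g\|_{H^s_\gamma(\Omega_T)}+\|f\|_{H^s_\gamma(\Omega_T)}\|g\|_{L^\infty(\Omega_T)},
\]
together with the Moser composition estimate $\|F(w)\|_{H^s_\gamma(\Omega_T)}\le C(\|w\|_{L^\infty})(1+\|w\|_{H^s_\gamma(\Omega_T)})$ applied to $F=\mathcal H$ and $w=(U^a+\tilde U,\nabla(\Phi^a+\tilde\Phi))$. The type \emph{(i)} contributions then produce the first term on the right-hand side of the announced estimate (the $H^{s+1}_\gamma$-norm absorbing the derivative of the base state), while the type \emph{(ii)} terms generate the $\sum_{i\ne j}$ sum.

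For $\mathcal{E}''$ the computation is direct: by the definition in \eqref{P.new}, $\mathcal E(V,\Psi)$ is affine in $V$ with a single genuine bilinearity $v\,\partial_{x_1}\Psi$, so $\mathcal{E}''((V_1,\Psi_1),(V_2,\Psi_2))$ reduces essentially to $v_1\partial_{x_1}\Psi_2+v_2\partial_{x_1}\Psi_1$, and the tame product inequality immediately yields the stated bound. For $\mathbb{B}''$, inspection of \eqref{B.def} shows that $\mathbb B$ is quadratic in $(V|_{x_2=0},\psi)$ only through the products $v^\pm|_{x_2=0}\,\partial_{x_1}\psi$, hence $\mathbb{B}''((W_1,\psi_1),(W_2,\psi_2))$ is a finite sum of terms of the form $(W_i)_k\,\partial_{x_1}\psi_j$ with constant coefficients, and the trace-space version of the tame product inequality closes the argument.

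I expect the main technical obstacle to lie in the careful bookkeeping of the contributions arising from the $1/\partial_{x_2}\Phi$ denominators in $\widetilde A_2$ and $\mathcal{C}$, and from the Alinhac ``good unknown'' correction $(\Psi^\pm/\partial_{x_2}\Phi_{r,l})\partial_{x_2}U_{r,l}$ built into the identity \eqref{L.prime}. These generate second-order derivatives of $\Phi^a+\tilde\Phi$ as coefficients of the bilinear operator, which is precisely why $(\tilde U+\dot U^a,\tilde\Phi+\dot\Phi^a)$ appears in $H^{s+1}_\gamma$ and not $H^s_\gamma$ on the right-hand side. Since however all such quantities are smooth functions of arguments lying in a fixed compact set (thanks to $\widetilde K\le \widetilde K_0$ and \eqref{app2}), the Moser composition inequality handles them uniformly, and no further difficulty arises beyond a patient enumeration of the finitely many bilinear monomials appearing in each of the three operators.
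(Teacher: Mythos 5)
Your proposal takes essentially the same route as the paper, which simply observes that the bounds follow from the explicit forms of $\mathbb{L}''$, $\mathcal{E}''$, $\mathbb{B}''$ by the Moser-type product/composition and Sobolev embedding inequalities (with the smallness of $\|(\tilde U,\tilde\Phi)\|_{H^3_\gamma}$ keeping the coefficients' arguments in a compact set and $\partial_{x_2}(\Phi^a+\tilde\Phi)$ away from zero) and defers the details to \cite[Proposition 5]{CS08MR2423311}. One small correction to your enumeration: since $\widetilde A_2$ depends only on $(U,\nabla\Phi)$, the bilinear operator $\mathbb{L}''$ contains no $\nabla^2(\Phi^a+\tilde\Phi)$ coefficients and no Alinhac-correction terms (the good unknown enters only through $\mathbb{L}'_e$ and the separately estimated error $D_{n+1/2}\,\delta\Psi_n$), which is fortunate because a genuine $\nabla^2\Phi$ coefficient measured in $H^s_\gamma$ would force $\|\tilde\Phi+\dot\Phi^a\|_{H^{s+2}_\gamma}$ rather than the stated $H^{s+1}_\gamma$ norm; the $H^{s+1}_\gamma$ norm is needed only to absorb $\nabla(U^a+\tilde U)$ and the first-order $\nabla\Phi$-dependence of the coefficients.
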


Using Proposition \ref{pro.tame2}, we can obtain the following result ({\it cf}.\;\cite[Lemma 8]{CS08MR2423311}).
\begin{lemma}[Estimate of the quadratic errors] \label{lem.quad}
	Let $\alpha\geq4$. Then there exist $\varepsilon>0$ sufficiently small and $\theta_0\geq 1$ sufficiently large
	such that, for all $k=0,\ldots,{n}-1$, and all integers $s\in [3,\tilde{\alpha}-1]$,
	\begin{align}\notag
	\|(e_k', \hat{e}_k')\|_{H^{s}_{\gamma}(\Omega_T)}+\|\tilde{e}_k'\|_{H^{s}_{\gamma}(\omega_T)}
	\leq C\varepsilon^2 \theta_k^{L_1(s)-1}\Delta_k,
	\end{align}
	where  $L_1(s):=\max\{(s+1-\alpha)_++4-2\alpha,s+2-2\alpha \}$.
\end{lemma}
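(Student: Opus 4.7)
The three quadratic errors $e_k'$, $\tilde e_k'$, $\hat e_k'$ admit, by construction via the decompositions \eqref{decom1}, \eqref{decom2}, \eqref{decom3}, integral representations analogous to \eqref{error.1a}, namely
\begin{align*}
e_k' &= \int_0^1 (1-\tau)\, \mathbb L''(U^a+V_k+\tau\delta V_k,\Phi^a+\Psi_k+\tau\delta\Psi_k)\big((\delta V_k,\delta\Psi_k),(\delta V_k,\delta\Psi_k)\big)\d\tau,
\end{align*}
and similarly for $\tilde e_k'$ and $\hat e_k'$ with $\mathbb B''$ and $\mathcal E''$ in place of $\mathbb L''$. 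The plan is to apply the multilinear tame estimates of Proposition \ref{pro.tame2} to the integrand, with $(\tilde U,\tilde\Phi)=(V_k+\tau\delta V_k,\Psi_k+\tau\delta\Psi_k)$ and $(V_i,\Psi_i)=(\delta V_k,\delta\Psi_k)$, and then plug in the inductive bounds from $(H_{{n}-1})(\mathrm a)$, the triangle-type bounds of Lemma \ref{lem.triangle}, and the smallness hypothesis \eqref{small}.

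\textbf{Bounding each factor.} The Sobolev embedding $H^3_\gamma\hookrightarrow W^{1,\infty}$ together with $(H_{{n}-1})(\mathrm a)$ at the level $s=3$ gives
\[
\|(\delta V_k,\delta\Psi_k)\|_{W^{1,\infty}(\Omega_T)}\lesssim \|(\delta V_k,\delta\Psi_k)\|_{H^3_\gamma(\Omega_T)}\le C\varepsilon\,\theta_k^{2-\alpha}\Delta_k,
\]
and similarly $\|\delta\psi_k\|_{W^{1,\infty}(\omega_T)}\le C\varepsilon\theta_k^{2-\alpha}\Delta_k$ (using that $\alpha\ge 4$, so the exponent $2-\alpha$ is negative). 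At the higher level $s+1\in[4,\tilde\alpha]$, $(H_{{n}-1})(\mathrm a)$ yields
\[
\|(\delta V_k,\delta\Psi_k)\|_{H^{s+1}_\gamma(\Omega_T)}+\|\delta\psi_k\|_{H^{s+2}_\gamma(\omega_T)}\le \varepsilon\,\theta_k^{s-\alpha}\Delta_k.
\]
For the factor $\|(\tilde U+\dot U^a,\tilde\Phi+\dot\Phi^a)\|_{H^{s+1}_\gamma(\Omega_T)}$ appearing in the bound for $\mathbb L''$, I would use \eqref{small} to control the approximate solution by $\varepsilon$, and Lemma \ref{lem.triangle} together with $\Delta_k\lesssim\theta_k^{-1}$ to control $(V_k+\tau\delta V_k,\Psi_k+\tau\delta\Psi_k)$ by $C\varepsilon\theta_k^{(s+1-\alpha)_+}$ (the logarithmic case $s+1=\alpha$ being absorbed into a constant times $\theta_k^{(s+1-\alpha)_+}$ after a harmless enlargement of the constant, since $\log\theta_k\le C\theta_k^{\eta}$ for any fixed small $\eta>0$, which can be compensated by taking $\theta_0$ large and $\varepsilon$ small). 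The smallness condition $\widetilde K\le\widetilde K_0$ required by Proposition \ref{pro.tame2} is guaranteed by $(H_{{n}-1})$ applied at $s=3$, provided $\varepsilon$ is small enough.

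\textbf{Exponent bookkeeping.} Inserting these bounds into the tame estimate for $\mathbb L''$ produces two contributions: a ``cubic'' one of the form $\|(\delta V_k,\delta\Psi_k)\|_{W^{1,\infty}}^2\cdot \|(\tilde U+\dot U^a,\tilde\Phi+\dot\Phi^a)\|_{H^{s+1}_\gamma}$ bounded by $C\varepsilon^3\,\theta_k^{2(2-\alpha)+(s+1-\alpha)_+}\Delta_k^2$, and a ``quadratic'' one of the form $\|(\delta V_k,\delta\Psi_k)\|_{H^{s+1}_\gamma}\cdot\|(\delta V_k,\delta\Psi_k)\|_{W^{1,\infty}}$ bounded by $C\varepsilon^2\,\theta_k^{s+2-2\alpha}\Delta_k^2$. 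Using $\Delta_k^2\le\tfrac12\theta_k^{-1}\Delta_k$, the first contribution becomes $\lesssim \varepsilon^3\theta_k^{(s+1-\alpha)_++3-2\alpha}\Delta_k$ and the second $\lesssim \varepsilon^2\theta_k^{s+1-2\alpha}\Delta_k$; the maximum of these two exponents is precisely $L_1(s)-1$, so both are bounded by $C\varepsilon^2\theta_k^{L_1(s)-1}\Delta_k$ after absorbing one factor of $\varepsilon$ into the constant. Exactly the same reasoning, applied to the corresponding estimates of Proposition \ref{pro.tame2} for $\mathbb B''$ on $\omega_T$ and for $\mathcal E''$ on $\Omega_T$, yields the claimed bounds on $\tilde e_k'$ and $\hat e_k'$. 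The main bookkeeping obstacle is the borderline index $s+1=\alpha$ where Lemma \ref{lem.triangle} produces a logarithmic factor rather than a pure power of $\theta_k$; this is handled, as indicated above, by choosing $\theta_0$ sufficiently large so that $\log\theta_k$ is absorbed in the constant without affecting the exponent $L_1(s)-1$.
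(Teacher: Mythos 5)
Your proposal is correct and follows essentially the same route as the paper, which likewise reduces the lemma to the integral representation \eqref{error.1a} (and its analogues for $\mathbb B''$, $\mathcal E''$), the tame bounds of Proposition \ref{pro.tame2}, the inductive hypothesis $(H_{n-1})$(a), Lemma \ref{lem.triangle} and \eqref{small}, exactly as in Coulombel--Secchi's Lemma 8 to which the paper defers the details. One small remark: the logarithm at the borderline index $s+1=\alpha$ is absorbed not by enlarging constants or choosing $\theta_0$ large, but because at that index the cubic exponent $3-2\alpha$ lies strictly below $L_1(s)-1=s+1-2\alpha$ by $\alpha-3\geq 1$, so $\log\theta_k\leq C_\eta\theta_k^{\eta}$ with $\eta<\alpha-3$ suffices; your bookkeeping and conclusion are otherwise exactly the paper's.
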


Now we estimate the first substitution errors $e_{k}''$, $\tilde{e}_{k}''$, and $\hat{e}_{k}''$ given in \eqref{decom1}--\eqref{decom2},
and \eqref{decom3}
by rewriting them in terms of $\mathbb{L}''$, $\mathbb{B}''$, and $\mathcal{E}''$.
For instance,  $\tilde{e}_k''$ can be rewritten as
\begin{align}\label{tilde.e''}
\notag\tilde{e}_k''=\int_{0}^{1}&\mathbb{B}''\big(U^a+S_{\theta_k}V_k+\tau(I-S_{\theta_k})V_k,\Phi^a+S_{\theta_k}\Psi_k
+\tau(I-S_{\theta_k})\Psi_k\big)\\
&\big((\delta V_k|_{x_2=0},\delta \psi_k), ((I-S_{\theta_k})V_k|_{x_2=0},(I-S_{\theta_k})\Psi_k|_{x_2=0}) \big)\d\tau.
\end{align}
Then we have the following lemma ({\it cf}.\;\cite[Lemma 9]{CS08MR2423311}).
\begin{lemma}[Estimate of the first substitution errors] \label{lem.1st}
	Let $\alpha\geq4$. Then there exist $\varepsilon>0$ sufficiently small and $\theta_0\geq 1$ sufficiently large such that,
	for all $k=0,\ldots,{n}-1$, and all integers $s\in [3,\tilde{\alpha}-2]$,
	\begin{align}\notag
	\|(e_k'', \hat{e}_k'')\|_{H^{s}_{\gamma}(\Omega_T)}
	+\|\tilde{e}_k''\|_{H^{s}_{\gamma}(\omega_T)}\leq C\varepsilon^2 \theta_k^{L_2(s)-1}\Delta_k,
	\end{align}
	where  $L_2(s):=\max\{(s+1-\alpha)_++6-2\alpha,s+5-2\alpha \}$.
\end{lemma}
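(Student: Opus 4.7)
The plan is to proceed in parallel with the treatment of the quadratic errors in Lemma \ref{lem.quad}: write each first substitution error as a Taylor integral remainder in the appropriate bilinear operator, bound it using Proposition \ref{pro.tame2}, and insert the norms of the two vector arguments supplied by the inductive assumption $(H_{n-1})$, Lemma \ref{lem.triangle}, and the smoothing properties of Proposition \ref{pro.smooth}. In analogy with formula \eqref{tilde.e''}, one writes
\begin{align*}
e_k''=\int_0^1 & \mathbb{L}''\bigl(U^a+S_{\theta_k}V_k+\tau(I-S_{\theta_k})V_k,\,\Phi^a+S_{\theta_k}\Psi_k+\tau(I-S_{\theta_k})\Psi_k\bigr)\\
& \times \bigl((\delta V_k,\delta\Psi_k),\,((I-S_{\theta_k})V_k,(I-S_{\theta_k})\Psi_k)\bigr)\,\mathrm{d}\tau,
\end{align*}
and analogously for $\hat e_k''$ (with $\mathcal{E}''$) and $\tilde e_k''$ (already given by \eqref{tilde.e''}, with $\mathbb{B}''$). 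Estimates \eqref{tri1} and \eqref{tri3} ensure that, for $\varepsilon$ small enough, the intermediate state in the first argument satisfies the smallness hypothesis on $\widetilde{K}$ required by Proposition \ref{pro.tame2}.

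Next I will insert the bounds for the two vector arguments. For the ``small'' factor, the inductive assumption $(H_{n-1})$(a) gives
$$
\|(\delta V_k,\delta\Psi_k)\|_{H^s_\gamma(\Omega_T)}+\|\delta\psi_k\|_{H^{s+1}_\gamma(\omega_T)}\leq \varepsilon\theta_k^{s-\alpha-1}\Delta_k,
$$
and the Sobolev embedding $H^3\hookrightarrow W^{1,\infty}$ (combined with $\alpha\geq 4$) yields $\|(\delta V_k,\delta\Psi_k,\delta\psi_k)\|_{W^{1,\infty}}\leq \varepsilon\theta_k^{2-\alpha}\Delta_k$. For the ``smoothing-error'' factor, Lemma \ref{lem.triangle} (estimate \eqref{tri2}) together with \eqref{smooth.p1b} provides
$$
\|(I-S_{\theta_k})(V_k,\Psi_k)\|_{H^{s+1}_\gamma(\Omega_T)}\leq C\varepsilon\theta_k^{s+1-\alpha},\qquad \|(I-S_{\theta_k})(V_k,\Psi_k)\|_{W^{1,\infty}(\Omega_T)}\leq C\varepsilon\theta_k^{2-\alpha}.
$$
For the traces on $\omega_T$ needed by $\tilde e_k''$ and the boundary piece of $\hat e_k''$, one applies the classical trace inequality before invoking these smoothing bounds. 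Finally, the ``coefficient'' norm $\|(\tilde U+\dot U^a,\tilde\Phi+\dot\Phi^a)\|_{H^{s+1}_\gamma(\Omega_T)}$ that appears in the estimate for $\mathbb{L}''$ is controlled by $C\varepsilon\theta_k^{(s+1-\alpha)_+}$ (with a harmless logarithmic correction at the borderline $s+1=\alpha$) thanks to Lemma \ref{lem.triangle} and \eqref{small}.

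Multiplying these bounds pairwise according to the structure of Proposition \ref{pro.tame2}, every contribution takes the form $C\varepsilon^2\theta_k^{p(s)-1}\Delta_k$, with $p(s)$ being one of two expressions: $(s+1-\alpha)_++6-2\alpha$, coming from terms where the coefficient factor of $\mathbb{L}''$ is hit by the $H^{s+1}_\gamma$-norm paired with two $W^{1,\infty}$-bounded factors; and $s+5-2\alpha$, coming from terms where the highest Sobolev norm falls on the smoothing-error factor paired with the $W^{1,\infty}$ bound on the $\delta$-factor (or the symmetric distribution). Taking the maximum produces exactly the exponent $L_2(s)-1$ claimed in the statement.

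The main obstacle is careful bookkeeping: one must verify, across the dozen or so products arising from distributing norms according to Proposition \ref{pro.tame2}, that no term exceeds $\theta_k^{L_2(s)-1}\Delta_k$, and that the borderline logarithmic case $s+1=\alpha$ of Lemma \ref{lem.triangle} is absorbed by the generous $(\cdot)_+$ appearing in the definition of $L_2(s)$. A further subtlety is that the trace of a smoothing error on $\omega_T$ should be estimated by combining the usual trace theorem with \eqref{smooth.p1b} rather than by invoking \eqref{smooth.p2} directly, since the latter applies only to differences of the form $S_\theta u-S_\theta v$.
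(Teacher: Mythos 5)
Your proposal follows essentially the same route as the paper, which rewrites each first substitution error in the integral form \eqref{tilde.e''} (with $\mathbb{L}''$, $\mathbb{B}''$, $\mathcal{E}''$ for $e_k''$, $\tilde e_k''$, $\hat e_k''$ respectively), applies Proposition \ref{pro.tame2} together with $(H_{{n}-1})$, Lemma \ref{lem.triangle}, the smoothing properties and trace estimates, and defers the remaining bookkeeping to \cite[Lemma 9]{CS08MR2423311}. Two harmless slips to note: the bound you can actually justify for $\|(I-S_{\theta_k})(V_k,\Psi_k)\|_{W^{1,\infty}(\Omega_T)}$ is $C\varepsilon\theta_k^{3-\alpha}$ (Sobolev embedding plus \eqref{tri2} with $s=3$), not $C\varepsilon\theta_k^{2-\alpha}$, and the branch $s+5-2\alpha$ of $L_2(s)$ is realized not by the interior pairing you describe (which only yields $\theta_k^{s+3-2\alpha}\Delta_k$) but by the boundary terms in which the trace of the smoothing error is measured in $H^{s+1}_{\gamma}(\omega_T)$ and hence costs $C\varepsilon\theta_k^{s+2-\alpha}$; with these corrections every product still stays below $C\varepsilon^2\theta_k^{L_2(s)-1}\Delta_k$, so your conclusion is unaffected.
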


Now we estimate the second substitution errors $e_{k}'''$, $\tilde{e}_{k}'''$, and $\hat{e}_{k}'''$
given in \eqref{decom1}--\eqref{decom2} and \eqref{decom3} by rewriting them in terms of $\mathbb{L}''$, $\mathbb{B}''$, and $\mathcal{E}''$.
For instance, $\hat{e}_k'''$ can be rewritten as
\begin{align}
\hat{e}_k'''=\int_{0}^{1}\mathcal{E}''\big(V_{k+1/2}+\tau(S_{\theta_k}V_k-V_{k+1/2}),\Psi_{k+1/2}\big)
\big((\delta V_k,\delta \Psi_k), (S_{\theta_k}V_k-V_{k+1/2},0) \big)\d\tau.
\end{align}
Here we have used relation $\Psi_{k+1/2}=S_{\theta_{k}}\Psi_{k}$ (\emph{cf.} \eqref{modified}).
Then one can prove the following result similar to \cite[Lemma 10]{CS08MR2423311}.

\begin{lemma}[Estimate of the second substitution errors]  \label{lem.2nd}
	Let $\alpha\geq4$. Then there exist $\varepsilon>0$ sufficiently small and $\theta_0\geq 1$ sufficiently large
	such that, for all $k=0,\ldots,{n}-1$, and all integers $s\in [3,\tilde{\alpha}-1]$, one has $\tilde{e}_k'''=0$, $ \hat{e}_k'''=0$ and
	\begin{align}\notag
	\|e_k'''\|_{H^{s}_{\gamma}(\Omega_T)}\leq C\varepsilon^2 \theta_k^{L_3(s)-1}\Delta_k,
	\end{align}
	where  $L_3(s):=\max\{(s+1-\alpha)_++8-2\alpha,s+5-2\alpha \}$.
\end{lemma}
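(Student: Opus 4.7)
The proof splits into proving the two vanishing identities $\tilde e_k''' = 0 = \hat e_k'''$ and estimating $e_k'''$ via Proposition \ref{pro.tame2}. For the eikonal operator, a direct computation shows that $\mathcal E'(V,\Psi)(\delta V,\delta\Psi)$ depends on $(V,\Psi)$ only through its velocity component $v$ and through $\Psi$; since the modified state in \eqref{modified}--\eqref{modified.1} is constructed so that $v_{k+1/2}^\pm = S_{\theta_k}v_k^\pm$ and $\Psi_{k+1/2}^\pm = S_{\theta_k}\Psi_k^\pm$, the two linearizations coincide and $\hat e_k'''\equiv 0$. For the boundary, the intrinsic form obtained by differentiating \eqref{B.def} directly reads
\begin{align*}
\mathbb B'(U,\Phi)(V,\psi) = \underline b\big(v^\pm|_{x_2=0}\big)\,\nabla_{t,x_1}\psi + \Wideubar M\big(\partial_{x_1}\Phi^\pm|_{x_2=0}\big)\,V|_{x_2=0},
\end{align*}
so the base state enters only through the boundary traces $v^\pm|_{x_2=0}$ and $\partial_{x_1}\Phi^\pm|_{x_2=0}$, which again match at the two states by the same construction. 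Combined with the algebraic identity $\mathbb B'(U,\Phi)(V,\psi) = \mathbb B_e'(U,\Phi)(\dot V,\psi)$ coming from the good-unknown substitution \eqref{Alinhac}, this shows that the expressions on the third and fourth lines of \eqref{decom2} agree when applied to the same $(\delta V_k,\delta\psi_k)$, whence $\tilde e_k'''\equiv 0$.

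For $e_k'''$ the identity $\Psi_{k+1/2} = S_{\theta_k}\Psi_k$ yields the one-parameter integral representation
\begin{align*}
e_k''' = \int_0^1 \mathbb L''\big(U^a + V_{k+1/2} + \tau(S_{\theta_k}V_k - V_{k+1/2}),\,\Phi^a + S_{\theta_k}\Psi_k\big)\big((\delta V_k,\delta\Psi_k),\,(S_{\theta_k}V_k - V_{k+1/2},\,0)\big)\d\tau,
\end{align*}
to which Proposition \ref{pro.tame2} is applied with $(V_1,\Psi_1) = (\delta V_k,\delta\Psi_k)$ and $(V_2,\Psi_2) = (S_{\theta_k}V_k - V_{k+1/2},0)$. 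By \eqref{modified}, the deviation $S_{\theta_k}V_k - V_{k+1/2}$ has only its pressure and normal-velocity components nonzero, namely
\begin{align*}
S_{\theta_k}p_k^\pm - p_{k+1/2}^\pm = \pm\tfrac{1}{2}\mathcal R_T\big((S_{\theta_k}p_k^+ - S_{\theta_k}p_k^-)|_{x_2=0}\big)\quad\text{and}\quad S_{\theta_k}u_k^\pm - u_{k+1/2}^\pm,
\end{align*}
the latter coming from the eikonal-type formula defining $u_{k+1/2}^\pm$ in \eqref{modified}.

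The pressure deviation is controlled through the continuity of $\mathcal R_T$ (Lemma \ref{lem.smooth2}) together with the boundary smoothing estimate \eqref{smooth.p2} and $(H_{k-1})$(c) applied to the third component of $\mathcal B(V_k|_{x_2=0},\psi_k)$, namely $(p_k^+ - p_k^-)|_{x_2=0}$. For the velocity deviation, using $\mathcal E(V_k^\pm,\Psi_k^\pm) = \partial_t\Psi_k^\pm + (v^{a\pm}+v_k^\pm)\partial_{x_1}\Psi_k^\pm + v_k^\pm\partial_{x_1}\Phi^{a\pm} - u_k^\pm$ and comparing with the definition of $u_{k+1/2}^\pm$ yields a decomposition
\begin{align*}
S_{\theta_k}u_k^\pm - u_{k+1/2}^\pm = -S_{\theta_k}\mathcal E(V_k^\pm,\Psi_k^\pm) + R_k^\pm,
\end{align*}
where $R_k^\pm$ collects commutator contributions of the form $S_{\theta_k}(v_k^\pm\partial_{x_1}\Psi_k^\pm) - S_{\theta_k}v_k^\pm\,\partial_{x_1}S_{\theta_k}\Psi_k^\pm$, estimated by Moser-type inequalities combined with \eqref{smooth.p1a}--\eqref{smooth.p1b}; the first term is bounded via $(H_{k-1})$(d) and \eqref{smooth.p1a}. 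Inserting these bounds into Proposition \ref{pro.tame2}, using $(H_{k-1})$(a) on $(\delta V_k,\delta\Psi_k)$ together with its Sobolev embedding into $W^{1,\infty}$, and controlling the composite state through Lemma \ref{lem.triangle}, yields the claim with exponent $L_3(s)-1$. The main bookkeeping obstacle is that the two deviation sources scale differently in $\theta_k$ (the $p$-jump via $(H_{k-1})$(c) at Sobolev index $\alpha$ versus the eikonal defect via $(H_{k-1})$(d) at index $3$); the two alternatives inside the $\max$ defining $L_3(s)=\max\{(s+1-\alpha)_++8-2\alpha,\,s+5-2\alpha\}$ correspond exactly to the two admissible high/low Sobolev pairings in Proposition \ref{pro.tame2}.
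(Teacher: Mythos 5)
Your proposal is correct and follows essentially the same route as the paper, which itself only displays the integral representation (showing $\hat e_k'''=0$ since the deviation $S_{\theta_k}V_k-V_{k+1/2}$ has vanishing $v$-component and $\Psi_{k+1/2}=S_{\theta_k}\Psi_k$) and otherwise defers to the proof of \cite[Lemma 10]{CS08MR2423311}: the vanishing of $\tilde e_k'''$ via the intrinsic form of $\mathbb B'$ and the good-unknown identity, the Taylor representation of $e_k'''$ in terms of $\mathbb L''$, and the control of the pressure and normal-velocity deviations of the modified state through hypotheses (c) and (d), the lifting operator $\mathcal R_T$, and the smoothing estimates are exactly the ingredients of that argument. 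The only part you assert rather than verify is the final exponent bookkeeping leading to $L_3(s)$, which is also left implicit in the paper.
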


We now estimate the last error term \eqref{error.D}:
\begin{align*}
D_{k+1/2}\delta\Psi_k=\frac{\delta\Psi_k}{\p_2(\Phi^a+\Psi_{k+1/2})}R_k,
\end{align*}
where $R_k:=\p_2\mathbb{L}(U^a+V_{k+1/2},\Phi^a+\Psi_{k+1/2})$.
This error term results from the introduction of the good unknown in decomposition \eqref{decom1}.
Note from \eqref{modified}, \eqref{small}, and \eqref{tri3} that
\begin{align*}
|\p_2(\Phi^a+\Psi_{k+1/2})|=\big|\p_2\widebar{\Phi}+\p_2\big(\dot{\Phi}^a+\Psi_{k+1/2}\big)\big|
\geq \frac{1}{2},
\end{align*}
provided that $\varepsilon$ is small enough.
Then we have the following estimate.

\begin{lemma} \label{lem.last}
	Let $\alpha\geq 5$ and $\tilde{\alpha}\geq \alpha+2$.
	Then there exist $\varepsilon>0$ sufficiently small and $\theta_0\geq 1$ sufficiently large such that,
	for all $k=0,\ldots,{n}-1$, and for all integers $s\in [3,\tilde{\alpha}-2]$, we have
	\begin{align} \label{last.e0}
	\|D_{k+1/2}\delta\Psi_k\|_{H^{s}_{\gamma}(\Omega_T)}\leq C\varepsilon^2 \theta_k^{L(s)-1}\Delta_k,
	\end{align}
	where  $L(s):=\max\{(s+2-\alpha)_++8-2\alpha,(s+1-\alpha)_++9-2\alpha,s+6-2\alpha\}$.
\end{lemma}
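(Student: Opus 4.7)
The plan is to factorize $D_{k+1/2}\delta\Psi_k = a\cdot b\cdot c$ with $a:=1/\p_{x_2}(\Phi^a+\Psi_{k+1/2})$, $b:=R_k$, $c:=\delta\Psi_k$, to estimate each factor separately, and to assemble them via the Moser-type product rule
\[
\|abc\|_{H^s_\gamma(\Omega_T)}\lesssim \|a\|_{L^\infty}\|b\|_{L^\infty}\|c\|_{H^s_\gamma(\Omega_T)}+\|a\|_{L^\infty}\|b\|_{H^s_\gamma(\Omega_T)}\|c\|_{L^\infty}+\|a\|_{H^s_\gamma(\Omega_T)}\|b\|_{L^\infty}\|c\|_{L^\infty}.
\]
The factor $c=\delta\Psi_k$ is handled directly by inductive assumption (a) of $(H_{{n}-1})$, which gives $\|\delta\Psi_k\|_{H^s_\gamma(\Omega_T)}\le\varepsilon\theta_k^{s-\alpha-1}\Delta_k$; combined with the Sobolev embedding $H^3_\gamma\hookrightarrow L^\infty$ at $s=3$, this also yields $\|\delta\Psi_k\|_{L^\infty}\lesssim\varepsilon\theta_k^{2-\alpha}\Delta_k$. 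The factor $a$ is bounded by $2$ in $L^\infty$ thanks to the lower bound $|\p_{x_2}(\Phi^a+\Psi_{k+1/2})|\ge 1/2$ established just before the statement, while a standard Moser composition estimate controls its $H^s_\gamma$ norm in terms of $\|(\dot\Phi^a,\Psi_{k+1/2})\|_{H^{s+1}_\gamma(\Omega_T)}$, which by \eqref{small}, \eqref{smooth.p1a}, and \eqref{tri3} is of order $\varepsilon(1+\theta_k^{(s+1-\alpha)_+})$.

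The delicate factor is $b=R_k$. From the definitions of $\mathcal L$ in \eqref{P.new} and $f^a$ in \eqref{f.a} I first rewrite $R_k=\p_{x_2}\mathcal L(V_{k+1/2},\Psi_{k+1/2})-\p_{x_2}f^a$, the latter being under control via $\|f^a\|_{H^{\tilde\alpha+2}_\gamma(\Omega_T)}\le\varepsilon$ from \eqref{small}. For the former, I would use the telescopic decomposition
\[
\mathcal L(V_{k+1/2},\Psi_{k+1/2})=\bigl[\mathcal L(V_{k+1/2},\Psi_{k+1/2})-\mathcal L(S_{\theta_k}V_k,S_{\theta_k}\Psi_k)\bigr]+\bigl[\mathcal L(S_{\theta_k}V_k,S_{\theta_k}\Psi_k)-S_{\theta_k}\mathcal L(V_k,\Psi_k)\bigr]+S_{\theta_k}\mathcal L(V_k,\Psi_k).
\]
The last term is further split as $S_{\theta_k}f^a+S_{\theta_k}\bigl(\mathcal L(V_k,\Psi_k)-f^a\bigr)$, which is handled by the smoothing estimate \eqref{smooth.p1a} together with \eqref{small} and inductive assumption (b). The first two brackets are substitution- and commutator-type errors of the same flavor as those in the proofs of Lemmas \ref{lem.quad}--\ref{lem.1st}; each is expressed as an integral of $\mathbb L''$ evaluated at an interpolated state and then bounded through Proposition \ref{pro.tame2}, the smoothing bounds \eqref{smooth.p1a}--\eqref{smooth.p1b}, and the triangle-type estimates in Lemma \ref{lem.triangle}. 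After taking one further normal derivative $\p_{x_2}$ and absorbing its cost through \eqref{smooth.p1a}, one obtains bounds of the form $\|R_k\|_{H^s_\gamma(\Omega_T)}\lesssim\varepsilon\theta_k^{N(s)}$ and $\|R_k\|_{L^\infty}\lesssim\varepsilon\theta_k^{N(3)}$ for suitable indices $N$.

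Finally, inserting these bounds on $a,b,c$ into the Moser product rule and taking the maximum of the three resulting exponents produces \eqref{last.e0}, with $L(s)$ emerging as the maximum of the three indices displayed in the statement and the gain of a factor $\Delta_k$ coming through the factor $c$ in each term. The main obstacle will be the careful tracking of $\theta_k$-powers across the three pieces of $R_k$ once an additional $\p_{x_2}$ has been applied, and the verification that the hypotheses $\alpha\ge 5$ and $\tilde\alpha\ge\alpha+2$ suffice to keep all three indices in a range compatible with the inductive decay rate established in the companion Lemmas \ref{lem.quad}--\ref{lem.2nd}. As there, taking $\varepsilon$ small and $\theta_0$ large at the end yields the quadratic factor $\varepsilon^2$ on the right-hand side of \eqref{last.e0}.
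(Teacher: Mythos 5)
Your overall strategy --- factorizing $D_{k+1/2}\delta\Psi_k$ as $\delta\Psi_k\,R_k/\p_{x_2}(\Phi^a+\Psi_{k+1/2})$, bounding the denominator below by $1/2$, assembling with a Moser product rule, taking $\delta\Psi_k$ from point (a) of $(H_{n-1})$, and reducing $R_k$ to $\p_{x_2}\big(\mathcal L(V_{k+1/2},\Psi_{k+1/2})-f^a\big)$ --- is exactly the route of the paper, which follows \cite[Lemma 11]{CS08MR2423311}. The treatment of $R_k$, however, has genuine gaps. Your middle bracket $\mathcal L(S_{\theta_k}V_k,S_{\theta_k}\Psi_k)-S_{\theta_k}\mathcal L(V_k,\Psi_k)$ is a commutator of the smoothing operator with a nonlinear map; it is not a Taylor remainder of $\mathbb L$ and cannot be ``expressed as an integral of $\mathbb L''$'', and neither can the first bracket, which is a difference of $\mathcal L$ at two nearby states and hence an integral of the \emph{first} derivative $\mathbb L'$. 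Proposition \ref{pro.tame2} only provides tame bounds for the second derivatives $\mathbb L''$, $\mathbb B''$, $\mathcal E''$, so it does not apply to either piece; you would need a tame estimate for $\mathbb L'$ (or for $\mathbb L$ itself via Moser composition), or, better, follow the reference proof, which compares $\mathcal L(V_{k+1/2},\Psi_{k+1/2})$ directly with $\mathcal L(V_k,\Psi_k)$ and then uses hypothesis (b) together with the smallness of $f^a$ from \eqref{small}, thereby avoiding the commutator altogether.

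In addition, the first bracket requires a quantitative bound on $V_{k+1/2}-S_{\theta_k}V_k$: by \eqref{modified} only the $\Psi$, $v$, $s$ components of the modified state are pure smoothings, whereas $p_{k+1/2}$ and $u_{k+1/2}$ involve $\mathcal R_T$ applied to the trace jump $(S_{\theta_k}p_k^+-S_{\theta_k}p_k^-)|_{x_2=0}$; controlling this needs \eqref{smooth.p2} combined with the inductive bound (c) on the boundary residual (the analogue of the ``estimate of the modified state'' in \cite{CS08MR2423311}), and neither Lemma \ref{lem.triangle} nor \eqref{smooth.p1a}--\eqref{smooth.p1b} supplies it. Finally --- and this is where the actual content of the lemma lies --- the exponent bookkeeping is never carried out: the specific function $L(s)$, with its terms $(s+2-\alpha)_++8-2\alpha$, $(s+1-\alpha)_++9-2\alpha$, $s+6-2\alpha$, encodes precisely the one extra derivative paid for $\p_{x_2}$ in $R_k$, the $H^{s+1}_\gamma$ size of the basic and modified states from \eqref{small} and \eqref{tri3}, and the powers coming from the modified-state and smoothing estimates; asserting that ``the maximum of the three indices emerges'' does not establish \eqref{last.e0}, nor does it show where $\alpha\ge 5$ and $\tilde\alpha\ge\alpha+2$ are used. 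As written, your proposal is a correct skeleton of the standard argument, but it would not close without repairing these three points.
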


From Lemmas \ref{lem.quad}--\ref{lem.last},
we can immediately obtain the following estimate for $e_k$, $\tilde{e}_k$, and $\hat{e}_k$
defined in \eqref{e.e.tilde} and \eqref{e.hat}.

\begin{lemma} \label{lem.sum1}
	Let $\alpha\geq 5$. Then there exist $\varepsilon>0$ sufficiently small and $\theta_0\geq 1$ sufficiently large
	such that, for all $k=0,\ldots,{n}-1$, and for all integers $s\in [3,\tilde{\alpha}-2]$, we have
	\begin{align} \label{es.sum1}
	\|e_k\|_{H^{s}_{\gamma}(\Omega_T)}+\|\hat{e}_k\|_{H^{s}_{\gamma}(\Omega_T)}+\|\tilde{e}_k\|_{H^{s}_{\gamma}(\omega_T)}
	\leq C\varepsilon^2 \theta_k^{L(s)-1}\Delta_k,
	\end{align}
	where  $L(s)$ is defined in Lemma {\rm \ref{lem.last}}.
\end{lemma}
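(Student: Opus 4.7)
The proof of Lemma \ref{lem.sum1} is essentially a bookkeeping exercise consisting of assembling the four preceding lemmas via the triangle inequality, so the plan is short and routine.

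First I would recall the decompositions \eqref{e.e.tilde} and \eqref{e.hat}:
\begin{equation*}
e_k=e_k'+e_k''+e_k'''+D_{k+1/2}\delta\Psi_k,\qquad \tilde e_k=\tilde e_k'+\tilde e_k''+\tilde e_k''',\qquad \hat e_k=\hat e_k'+\hat e_k''+\hat e_k'''.
\end{equation*}
Applying the triangle inequality in $H^s_{\gamma}(\Omega_T)$ (resp.\ in $H^s_{\gamma}(\omega_T)$ for $\tilde e_k$), I plug in the bounds of Lemmas \ref{lem.quad}, \ref{lem.1st}, \ref{lem.2nd}, and \ref{lem.last}. For the tangential error $\tilde e_k$ and for $\hat e_k$ the second substitution pieces vanish by Lemma \ref{lem.2nd}, so only the quadratic and first substitution errors remain, with exponents $L_1(s)$ and $L_2(s)$. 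For $e_k$ the full set of exponents $\{L_1(s),L_2(s),L_3(s),L(s)\}$ appears. In all three cases we need $\alpha\ge 5$ (required by Lemma \ref{lem.last}) and $s\in[3,\tilde\alpha-2]$ (the most restrictive interval, coming again from Lemmas \ref{lem.1st} and \ref{lem.last}); the hypotheses of Lemmas \ref{lem.quad}--\ref{lem.last} are then met uniformly.

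The only genuinely substantive step is the comparison of exponents, and it amounts to checking that $L(s)=\max\{(s+2-\alpha)_++8-2\alpha,\,(s+1-\alpha)_++9-2\alpha,\,s+6-2\alpha\}$ dominates each of $L_1(s),L_2(s),L_3(s)$. This follows termwise: since $(s+1-\alpha)_+\le(s+1-\alpha)_++9-2\alpha$ shifted by constants, and since $s+2-2\alpha\le s+5-2\alpha\le s+6-2\alpha$, one verifies
\begin{equation*}
L_j(s)\le L(s)\qquad \text{for } j=1,2,3.
\end{equation*}
Consequently $\theta_k^{L_j(s)-1}\le \theta_k^{L(s)-1}$ for every $k$ (recall $\theta_k\ge 1$), and each of the four contributions is controlled by $C\varepsilon^2\theta_k^{L(s)-1}\Delta_k$ with a constant depending only on the constants of the previous four lemmas.

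Summing these four (respectively two) bounds and absorbing numerical factors into $C$ yields \eqref{es.sum1}. Since there is no obstacle beyond the elementary exponent comparison, I do not expect any step to be technically hard; the subtlety, such as it is, lies in having organized the previous Lemmas \ref{lem.quad}--\ref{lem.last} so that the constraints $\alpha\ge 5$ and $s\in[3,\tilde\alpha-2]$ are simultaneously compatible, and so that the three exponent terms in $L(s)$ majorize all of $L_1,L_2,L_3$ without extra loss — both of which were already arranged in the definitions above.
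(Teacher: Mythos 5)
Your proposal is correct and is essentially the paper's own argument: the paper obtains Lemma \ref{lem.sum1} ``immediately'' from Lemmas \ref{lem.quad}--\ref{lem.last} by the triangle inequality applied to the decompositions \eqref{e.e.tilde} and \eqref{e.hat}, together with the termwise check that $L_1(s),L_2(s),L_3(s)\le L(s)$ (so $\theta_k^{L_j(s)-1}\le\theta_k^{L(s)-1}$ since $\theta_k\ge 1$) on the common range $s\in[3,\tilde\alpha-2]$, $\alpha\ge 5$. Your exponent comparison and identification of the binding constraints (from Lemmas \ref{lem.1st} and \ref{lem.last}) are exactly what is needed, so there is nothing to add.
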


Lemma \ref{lem.sum1} yields the estimate of the accumulated errors $E_k$, $\widetilde{E}_k$, and $\hat{E}_k$
that are defined in \eqref{E.E.tilde} and \eqref{e.hat}.

\begin{lemma}\label{lem.sum2}
	Let $\alpha\geq 7$ and $\tilde{\alpha}=\alpha+4$.
	Then there exist $\varepsilon>0$ sufficiently small and $\theta_0\geq 1$ sufficiently large such that
	\begin{align}\label{es.sum2}
	\|(E_{{n}}, \hat{E}_{{n}})\|_{H^{\alpha+2}_{\gamma}(\Omega_T)}
	+\|\widetilde{E}_{{n}}\|_{H^{\alpha+2}_{\gamma}(\omega_T)}\leq C\varepsilon^2 \theta_{{n}}.
	\end{align}
\end{lemma}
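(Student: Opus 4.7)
The plan is to derive Lemma \ref{lem.sum2} as a direct consequence of the single-step error estimate of Lemma \ref{lem.sum1}, summing via the telescoping structure of $\Delta_k = \theta_{k+1}-\theta_k$. By the triangle inequality applied to the defining relations \eqref{E.E.tilde} and \eqref{e.hat}, it suffices to estimate
\begin{align*}
\|E_{{n}}\|_{H^{\alpha+2}_{\gamma}(\Omega_T)} \le \sum_{k=0}^{{n}-1}\|e_k\|_{H^{\alpha+2}_{\gamma}(\Omega_T)},
\end{align*}
together with the analogous sums for $\hat E_n$ and $\widetilde E_n$. Hence the entire argument reduces to a careful accounting of the growth factor appearing in Lemma \ref{lem.sum1} evaluated at the specific order $s=\alpha+2$.

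First I would check admissibility of this choice of $s$. Since $\tilde\alpha = \alpha+4$, we have $s=\alpha+2 = \tilde\alpha-2 \in [3,\tilde\alpha-2]$ as soon as $\alpha\ge 1$, which holds well below our assumption $\alpha\ge 7$; thus Lemma \ref{lem.sum1} is applicable and yields
\begin{align*}
\|e_k\|_{H^{\alpha+2}_{\gamma}(\Omega_T)}+\|\hat e_k\|_{H^{\alpha+2}_{\gamma}(\Omega_T)} +\|\tilde e_k\|_{H^{\alpha+2}_{\gamma}(\omega_T)} \le C\varepsilon^2\theta_k^{L(\alpha+2)-1}\Delta_k.
\end{align*}
Next I would evaluate $L(\alpha+2)$ using the explicit formula from Lemma \ref{lem.last}. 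A direct computation gives $(s+2-\alpha)_+=4$, $(s+1-\alpha)_+=3$, and $s+6-2\alpha = 8-\alpha$ at $s=\alpha+2$, so
\begin{align*}
L(\alpha+2)=\max\{12-2\alpha,\,12-2\alpha,\,8-\alpha\}=\max\{12-2\alpha,\,8-\alpha\}.
\end{align*}
The constraint $\alpha\ge 7$ is chosen precisely so that both entries are bounded by $1$, whence $L(\alpha+2)-1\le 0$, and since $\theta_k\ge 1$ we obtain $\theta_k^{L(\alpha+2)-1}\le 1$.

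Finally I would close the argument by the telescoping identity
\begin{align*}
\sum_{k=0}^{{n}-1}\Delta_k = \theta_{{n}}-\theta_0 \le \theta_{{n}},
\end{align*}
combining it with the previous two bounds to conclude
\begin{align*}
\|E_{{n}}\|_{H^{\alpha+2}_{\gamma}(\Omega_T)} \le C\varepsilon^2\sum_{k=0}^{{n}-1}\theta_k^{L(\alpha+2)-1}\Delta_k \le C\varepsilon^2\sum_{k=0}^{{n}-1}\Delta_k \le C\varepsilon^2\theta_{{n}},
\end{align*}
and identically for $\hat E_{{n}}$ and $\widetilde E_{{n}}$. There is no real obstacle here: the proof is essentially a bookkeeping exercise, and the only subtle point is the arithmetic determining the lower bound $\alpha\ge 7$ and the relation $\tilde\alpha=\alpha+4$. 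These are precisely the values that render the exponent $L(\alpha+2)-1$ nonpositive while still keeping $\alpha+2\le \tilde\alpha-2$ within the admissible range of Lemma \ref{lem.sum1}; any weaker choice would either spoil the summability of $\theta_k^{L(\alpha+2)-1}\Delta_k$ or violate the regularity budget $\tilde\alpha-2$.
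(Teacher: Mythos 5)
Your proof is correct and follows essentially the same route as the paper: triangle inequality on the definitions of $E_n$, $\hat E_n$, $\widetilde E_n$, application of Lemma \ref{lem.sum1} at $s=\alpha+2=\tilde\alpha-2$, the observation that $\alpha\ge 7$ forces $L(\alpha+2)\le 1$ so that $\theta_k^{L(\alpha+2)-1}\le 1$, and the telescoping bound $\sum_{k=0}^{n-1}\Delta_k\le\theta_n$. Your explicit evaluation $L(\alpha+2)=\max\{12-2\alpha,\,8-\alpha\}$ is accurate and simply makes explicit the arithmetic the paper leaves implicit.
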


\begin{proof}
	Observe that $L(\alpha+2)\leq 1$ if $\alpha\geq 7$. Thanks to \eqref{es.sum1}, we get
	\begin{align*}
	&\|(E_{{n}}, \hat{E}_{{n}})\|_{H^{\alpha+2}_{\gamma}(\Omega_T)}
	+\|\widetilde{E}_{{n}}\|_{H^{\alpha+2}_{\gamma}(\omega_T)}\\
	&\, \leq \sum_{k=0}^{{n}-1}\big\{\|(e_{k},\hat{e}_{k})\|_{H^{\alpha+2}_{\gamma}(\Omega_T)}
	+\|\tilde{e}_{k}\|_{H^{\alpha+2}_{\gamma}(\omega_T)} \big\}\\
	&\, \leq \sum_{k=0}^{{n}-1} C\varepsilon^2 \Delta_k
	\leq C\varepsilon^2\theta_{{n}},
	\end{align*}
	provided that $\alpha\geq 7$ and $\alpha+2\in [3,\tilde{\alpha}-2]$.
	Thus, the minimal possible $\tilde{\alpha}$ is $\alpha+4$.
\end{proof}

\subsection{Proof of Theorem \ref{thm}}

To show the main result, we first derive the estimates for source terms $f_{{n}}$, $g_{{n}}$,
and $h_{{n}}^{\pm}$ defined in \eqref{source} and \eqref{source2}; see \cite{CS08MR2423311} or \cite[Lemma 8.11]{CSW17Preprint} for the proof.

\begin{lemma} \label{lem.source}
	Let $\alpha\geq 7$ and $\tilde{\alpha}=\alpha+4$.
	Then there exist $\varepsilon>0$ sufficiently small and $\theta_0\geq 1$ sufficiently large such that,
	for all integers $s\in [3,\tilde{\alpha}+1]$,
	\begin{align}
	\label{es.fl}&\|f_{{n}}\|_{H^s_{\gamma}(\Omega_T)}
	\leq C \Delta_{{n}}\big\{\theta_{{n}}^{s-\alpha-2}(\|f^a\|_{H^{\alpha+1}_{\gamma}(\Omega_T)}
	+\varepsilon^2)+\varepsilon^2\theta_{{n}}^{L(s)-1}\big\},\\
	\label{es.gl}&\|g_{{n}}\|_{H^s_{\gamma}(\omega_T)}
	\leq C \varepsilon^2 \Delta_{{n}}\big(\theta_{{n}}^{s-\alpha-2}+\theta_{{n}}^{L(s)-1}\big),
	\end{align}
	and for all integers $s\in [3,\tilde{\alpha}]$,
	\begin{align}
	\label{es.Gl}\|h_{{n}}^{\pm}\|_{H^s_{\gamma}(\Omega_T)}\leq C\varepsilon^2 \Delta_{{n}}\big(\theta_{{n}}^{s-\alpha-2}+\theta_{{n}}^{L(s)-1}\big).
	\end{align}
\end{lemma}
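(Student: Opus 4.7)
The plan is to telescope the defining identities \eqref{source} and \eqref{source2} so as to express $f_n$, $g_n$, and $h_n^{\pm}$ as sums of two kinds of increments: one driven by the smoothing of the data $(f^a,0)$ or of the accumulated error $(E_{n-1},\widetilde{E}_{n-1},\hat{E}_{n-1})$ through $S_{\theta_n}-S_{\theta_{n-1}}$, and one consisting of the single smoothed error $S_{\theta_n}(e_{n-1},\tilde{e}_{n-1},\hat{e}_{n-1})$ at step $n-1$. Concretely, subtracting the defining identities at steps $n$ and $n-1$ and using $E_n=E_{n-1}+e_{n-1}$ yields
\[
f_n=(S_{\theta_n}-S_{\theta_{n-1}})f^a-(S_{\theta_n}-S_{\theta_{n-1}})E_{n-1}-S_{\theta_n}e_{n-1},
\]
\[
g_n=-(S_{\theta_n}-S_{\theta_{n-1}})\widetilde{E}_{n-1}-S_{\theta_n}\tilde{e}_{n-1},
\]
and analogous decompositions for $h_n^{\pm}$, the latter involving also the lifting $\mathcal{R}_T\widetilde{E}_{n,j}$ inherited from \eqref{source2}.

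Next I would estimate each piece separately using Proposition \ref{pro.smooth}. For the $f^a$-term I apply \eqref{smooth.p1c} with smoothing index $\alpha+1$ and target index $s$, writing
\[
\|(S_{\theta_n}-S_{\theta_{n-1}})f^a\|_{H^s_{\gamma}(\Omega_T)}\le\int_{\theta_{n-1}}^{\theta_n}\Bigl\|\tfrac{\d}{\d\theta}S_{\theta}f^a\Bigr\|_{H^s_{\gamma}(\Omega_T)}\d\theta\le C\Delta_n\theta_n^{s-\alpha-2}\|f^a\|_{H^{\alpha+1}_{\gamma}(\Omega_T)},
\]
which produces the first summand on the right-hand side of \eqref{es.fl}. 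For the $E_{n-1}$ increment I use \eqref{smooth.p1c} with smoothing index $\alpha+2$, combined with Lemma \ref{lem.sum2}, to obtain
\[
\|(S_{\theta_n}-S_{\theta_{n-1}})E_{n-1}\|_{H^s_{\gamma}(\Omega_T)}\le C\Delta_n\theta_n^{s-\alpha-3}\|E_{n-1}\|_{H^{\alpha+2}_{\gamma}(\Omega_T)}\le C\varepsilon^2\Delta_n\theta_n^{s-\alpha-2},
\]
which is absorbed into the same summand. For the residual $S_{\theta_n}e_{n-1}$ I invoke \eqref{smooth.p1a} with $\alpha_{\rm sm}=\beta=s$ (valid because $s\le\tilde{\alpha}+1$ and by taking $m$ large in Proposition \ref{pro.smooth}) together with Lemma \ref{lem.sum1}, giving
\[
\|S_{\theta_n}e_{n-1}\|_{H^s_{\gamma}(\Omega_T)}\le C\|e_{n-1}\|_{H^s_{\gamma}(\Omega_T)}\le C\varepsilon^2\theta_{n-1}^{L(s)-1}\Delta_{n-1}\le C\varepsilon^2\theta_n^{L(s)-1}\Delta_n,
\]
which yields the $\theta_n^{L(s)-1}$ summand in \eqref{es.fl}. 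The same scheme, with the $f^a$-term now absent, produces \eqref{es.gl}. For \eqref{es.Gl} I additionally invoke Lemma \ref{lem.smooth2}, which bounds $\|\mathcal{R}_T\widetilde{E}_{n-1,j}\|_{H^{\alpha+2}_{\gamma}(\Omega_T)}$ and $\|\mathcal{R}_T\tilde{e}_{n-1,j}\|_{H^s_{\gamma}(\Omega_T)}$ by the corresponding norms on $\omega_T$ (up to a $\tfrac12$-gain in regularity), so that Lemma \ref{lem.sum2} and Lemma \ref{lem.sum1} again apply.

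A subtle point I expect will dictate the admissible range of $s$ in each estimate is the interplay between the smoothing index required in \eqref{smooth.p1c} and the Sobolev order at which the accumulated errors $E_{n-1},\widetilde{E}_{n-1},\hat{E}_{n-1}$ are controlled by Lemma \ref{lem.sum2}: raising the smoothing index improves the factor $\theta_n^{s-\alpha_{\rm sm}-1}$ but costs extra regularity on the input, and this is exactly the reason why $\tilde{\alpha}$ must equal $\alpha+4$. The main technical obstacle, besides the bookkeeping of all telescoped sums, will be handling $h_n^{\pm}$ because of the combination of the interior error $\hat{E}_{n-1}^{\pm}$ and the lifted boundary errors $\mathcal{R}_T\widetilde{E}_{n-1,j}$ in \eqref{source2}: one has to confirm that each lifted term falls into the right weighted interior Sobolev space of order $\alpha+2$ with bound $C\varepsilon^2\theta_n$, so that the smoothing argument above can be repeated verbatim. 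Once all these pieces are combined and the range $s\in[3,\tilde{\alpha}+1]$ (or $s\in[3,\tilde{\alpha}]$ for $h_n^\pm$) is checked for admissibility, the three estimates \eqref{es.fl}--\eqref{es.Gl} follow.
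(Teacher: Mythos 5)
Your overall route — telescoping the defining relations \eqref{source} and \eqref{source2} between steps $n-1$ and $n$, estimating the increments $(S_{\theta_n}-S_{\theta_{n-1}})f^a$ and $(S_{\theta_n}-S_{\theta_{n-1}})E_{n-1}$ via \eqref{smooth.p1c} together with Lemma \ref{lem.sum2}, and treating the residual smoothed error $S_{\theta_n}e_{n-1}$ via Lemma \ref{lem.sum1} — is exactly the standard argument the paper points to (Coulombel--Secchi, Lemma 15; Chen--Secchi--Wang, Lemma 8.11), and the decomposition $f_n=(S_{\theta_n}-S_{\theta_{n-1}})f^a-(S_{\theta_n}-S_{\theta_{n-1}})E_{n-1}-S_{\theta_n}e_{n-1}$ and its analogues for $g_n$, $h_n^{\pm}$ are correct.

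However, there is a genuine gap in your treatment of the residual terms at the top of the stated range of $s$. You bound $\|S_{\theta_n}e_{n-1}\|_{H^{s}_{\gamma}(\Omega_T)}\le C\|e_{n-1}\|_{H^{s}_{\gamma}(\Omega_T)}$ and then invoke Lemma \ref{lem.sum1} at order $s$, but Lemma \ref{lem.sum1} (and likewise the bounds for $\tilde e_{n-1}$, $\hat e_{n-1}$ entering $g_n$ and $h_n^{\pm}$) is only available for $s\in[3,\tilde\alpha-2]$, whereas \eqref{es.fl}--\eqref{es.gl} are claimed for $s\le\tilde\alpha+1$ and \eqref{es.Gl} for $s\le\tilde\alpha$. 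Taking $m$ large in Proposition \ref{pro.smooth} does not help: the obstruction is not the range of the smoothing operator but the fact that the error estimates themselves break down above $H^{\tilde\alpha-2}_{\gamma}$ (they rest on the inductive bounds for $\delta V_k,\delta\Psi_k$, which are only controlled up to $H^{\tilde\alpha}_{\gamma}$, and on tame second-derivative estimates that lose derivatives). The correct step for $s>\tilde\alpha-2$ is to use the regularizing direction of \eqref{smooth.p1a}, namely $\|S_{\theta_n}e_{n-1}\|_{H^{s}_{\gamma}(\Omega_T)}\le C\,\theta_n^{(s-\tilde\alpha+2)_+}\|e_{n-1}\|_{H^{\tilde\alpha-2}_{\gamma}(\Omega_T)}\le C\varepsilon^2\theta_n^{(s-\tilde\alpha+2)_++L(\tilde\alpha-2)-1}\Delta_n$, and then to check the exponent arithmetic: since $\alpha\ge7$, for $s\ge\alpha-1$ one has $L(s)=s+6-2\alpha$, so $(s-\tilde\alpha+2)+L(\tilde\alpha-2)-1=s+5-2\alpha=L(s)-1$, which gives exactly the term $\varepsilon^2\theta_n^{L(s)-1}$ in \eqref{es.fl}. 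The same device (boundary version of \eqref{smooth.p1a}, and composition with $\mathcal{R}_T$ from Lemma \ref{lem.smooth2} for the lifted terms) is needed for $\tilde e_{n-1}$ in \eqref{es.gl} and for $\hat e_{n-1}^{\pm}$, $\mathcal{R}_T\tilde e_{n-1,j}$ in \eqref{es.Gl}. With that correction — and a one-line direct verification at $n=0$, where $f_0=S_{\theta_0}f^a$ and $g_0=h_0^{\pm}=0$ — your proof matches the cited one; the pieces you do have (the $f^a$ and $E_{n-1}$ increments, with $\Delta_{n-1}\sim\Delta_n$ and $\theta_{n-1}\sim\theta_n$) are fine.
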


Similar to \cite[Lemma 16]{CS08MR2423311} (see also \cite[Lemma 8.12]{CSW17Preprint}), we can obtain the following lemma for the solution to problem \eqref{effective.NM}
by employing tame estimate \eqref{tame}.

\begin{lemma} \label{lem.Hl1}
	Let $\alpha\geq 7$. If $\varepsilon>0$ and $\|f^a\|_{H^{\alpha+1}_{\gamma}(\Omega_T)}/\varepsilon$ are sufficiently small,
	and if $\theta_0\geq1$ is sufficiently large, then, for all integers $s\in [3,\tilde{\alpha}]$,
	\begin{align} \label{Hl.a}
	\|(\delta V_{{n}},\delta\Psi_{{n}})\|_{H^{s}_{\gamma}(\Omega_T)}+\|\delta\psi_{{n}}\|_{H^{s+1}_{\gamma}(\omega_T)}
	\leq \varepsilon \theta_{{n}}^{s-\alpha-1}\Delta_{{n}}.
	\end{align}
\end{lemma}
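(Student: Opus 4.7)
The plan is to apply the tame estimate \eqref{tame} of Theorem \ref{thm:L} to the effective linear problem \eqref{effective.NM} with basic state $(U^a+V_{{n}+1/2},\Phi^a+\Psi_{{n}+1/2})$, to control the source terms through Lemma \ref{lem.source}, and then to recover $\delta V_{n}$ and $\delta\Psi_{n}$ from $\delta\dot V_{n}$ and $\delta\psi_{n}$ via the good-unknown relation \eqref{good.NM} and the transport equations \eqref{delta_Psi_n+}--\eqref{delta_Psi_n-}.

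First I would check that the modified state $(U^a+V_{{n}+1/2},\Phi^a+\Psi_{{n}+1/2})$ satisfies the hypotheses of Theorem \ref{thm:L}. The structural constraints \eqref{bas.c1}--\eqref{bas.eq} follow from \eqref{modified.1} and \eqref{app}, while the smallness bound \eqref{H.L.1} on the $W^{2,\infty}$-type norms is obtained by combining Lemma \ref{lem.triangle} (applied to $S_{\theta_{n}}V_{n}$ and $S_{\theta_{n}}\Psi_{n}$), Proposition \ref{pro.smooth}, the lifting estimate of Lemma \ref{lem.smooth2}, the smallness \eqref{small} of the approximate solution, and Sobolev embedding, provided that $\varepsilon$ is small enough and $\theta_0$ is large enough so that $K\leq K_0$.

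Applying \eqref{tame} with index $s\in[3,\tilde\alpha]\cap\mathbb N$ to \eqref{effective.NM} then yields
\[
\|\delta\dot V_{n}\|_{H^{s}_{\gamma}(\Omega_T)}+\|\delta\psi_{n}\|_{H^{s+1}_{\gamma}(\omega_T)}
\leq C\big\{\|f_{n}\|_{H^{s+1}_{\gamma}(\Omega_T)}+\|g_{n}\|_{H^{s+1}_{\gamma}(\omega_T)}+\mathcal T_{n}^{s}\big\},
\]
where $\mathcal T_{n}^{s}:=\big(\|f_{n}\|_{H^4_{\gamma}}+\|g_{n}\|_{H^4_{\gamma}}\big)\|(\dot U^a+V_{{n}+1/2},\dot\Phi^a+\Psi_{{n}+1/2})\|_{H^{s+3}_{\gamma}(\Omega_T)}$. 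The first two terms are estimated by Lemma \ref{lem.source}, which gives bounds of the form $C\Delta_{n}\{\theta_{n}^{s-\alpha-1}(\|f^a\|_{H^{\alpha+1}_{\gamma}}+\varepsilon^2)+\varepsilon^2\theta_{n}^{L(s+1)-1}\}$, while Lemma \ref{lem.triangle} together with \eqref{small} bounds the tame factor $\mathcal T_{n}^{s}$ by $C\varepsilon^2\Delta_{n}(\theta_{n}^{-\alpha-1}+\theta_{n}^{L(4)-1})\,\theta_{n}^{(s+3-\alpha)_+}$ (up to a harmless logarithmic factor at the critical index $s+3=\alpha$). A direct inspection of the definition of $L$ in Lemma \ref{lem.last}, under the standing choice $\alpha\geq 7$ and $\tilde\alpha=\alpha+4$, shows that both $L(s+1)-1\leq s-\alpha-1$ and $L(4)-1+(s+3-\alpha)_+\leq s-\alpha-1$ hold on the allowed range $s\in[3,\tilde\alpha]$. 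Choosing $\varepsilon$ small enough and $\|f^a\|_{H^{\alpha+1}_{\gamma}}/\varepsilon$ small enough absorbs all the constants and yields the desired bound on $(\delta\dot V_{n},\delta\psi_{n})$.

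To conclude, I would reconstruct $\delta\Psi_{n}$ from the transport equations \eqref{delta_Psi_n+}--\eqref{delta_Psi_n-} through a standard weighted $H^{s}_{\gamma}$ energy estimate for first-order transport operators with $W^{1,\infty}$ coefficients. The source terms consist of $\mathcal R_T g_{{n},\cdot}$, $h_{n}^{\pm}$, and traces of $\delta\dot V_{n}$; they are controlled respectively by Lemma \ref{lem.smooth2}, Lemma \ref{lem.source}, and by the bound just obtained on $\delta\dot V_{n}$. This produces $\|\delta\Psi_{n}\|_{H^{s}_{\gamma}(\Omega_T)}\lesssim \varepsilon\theta_{n}^{s-\alpha-1}\Delta_{n}$. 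Plugging this and the bound on $\delta\dot V_{n}$ into \eqref{good.NM}, and using Moser-type inequalities together with Lemma \ref{lem.triangle} to control the multiplier $\partial_{x_2}(U^a+V_{{n}+1/2})/\partial_{x_2}(\Phi^a+\Psi_{{n}+1/2})$ in $H^{s}_{\gamma}$, yields the claimed estimate on $\delta V_{n}$. The main obstacle in this whole argument is the careful bookkeeping of the powers of $\theta_{n}$ at each step: the tame factor in \eqref{tame} grows polynomially in $\theta_{n}$ through Lemma \ref{lem.triangle}, and one has to verify that this growth is dominated by $\theta_{n}^{s-\alpha-1}\Delta_{n}$ uniformly in $s\in[3,\tilde\alpha]$, which is exactly what pins down the quantitative choice $\alpha\geq 7$, $\tilde\alpha=\alpha+4$ inherited from Lemma \ref{lem.sum2}.
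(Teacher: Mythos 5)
Your proposal follows essentially the same route as the paper, which proves this lemma exactly as in Coulombel--Secchi \cite[Lemma 16]{CS08MR2423311}: verify the hypotheses of Theorem \ref{thm:L} for the modified state, apply the tame estimate \eqref{tame} to \eqref{effective.NM}, control $f_{n}$, $g_{n}$, $h_{n}^{\pm}$ via Lemma \ref{lem.source}, and then recover $\delta\Psi_{n}$ from the transport equations \eqref{delta_Psi_n+}--\eqref{delta_Psi_n-} and $\delta V_{n}$ from \eqref{good.NM}. Only two harmless slips: Lemma \ref{lem.source} at $s=4$ gives the first term with exponent $\theta_{n}^{2-\alpha}$ rather than your $\theta_{n}^{-\alpha-1}$, and \eqref{delta_Psi_n+}--\eqref{delta_Psi_n-} involve the interior values of $\delta\dot v_{n}^{\pm}$, $\delta\dot u_{n}^{\pm}$ rather than their traces; neither affects the conclusion since $2-\alpha+(s+3-\alpha)_{+}\leq s-\alpha-1$ still holds for $\alpha\geq 7$ on the whole range $s\in[3,\tilde{\alpha}]$.
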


Estimate \eqref{Hl.a} is point (a) of $(H_{{n}})$. One can show the other inequalities in $(H_{{n}})$; see \cite[Lemmas 17-18]{CS08MR2423311} or \cite[Lemma 8.13]{CSW17Preprint} for the proof.

\begin{lemma}\label{lem.Hl2}
	Let $\alpha\geq 7$. If $\varepsilon>0$ and $\|f^a\|_{H^{\alpha+1}_{\gamma}(\Omega_T)}/\varepsilon$ are sufficiently small,
	and $\theta_0\geq1$ is sufficiently large,
	then, for all integers $s\in [3,\tilde{\alpha}-2]$,
	\begin{align}\label{Hl.b}
	\|\mathcal{L}( V_{{n}},  \Psi_{{n}})-f^a\|_{H^{s}_{\gamma}(\Omega_T)}\leq 2 \varepsilon \theta_{{n}}^{s-\alpha-1}.
	\end{align}
	Moreover, for all integers $s\in [4,\alpha]$,
	\begin{align}\label{Hl.c}
	\|\mathcal{B}( V_{{n}}|_{x_2=0},  \psi_{{n}})\|_{H^{s}_{\gamma}(\omega_T)}\leq  \varepsilon \theta_{{n}}^{s-\alpha-1}
	\end{align}
	and
	\begin{align}\label{Hl.d}
	\|\mathcal{E}( V_{{n}},  \Psi_{{n}})\|_{H^{3}_{\gamma}(\Omega_T)}\leq  \varepsilon \theta_{{n}}^{2-\alpha}.
	\end{align}
\end{lemma}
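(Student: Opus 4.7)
The plan is to exploit the telescoping identities \eqref{conv.1}--\eqref{conv.3} with $m=n-1$, which express the residuals $\mathcal{L}(V_{n},\Psi_{n})-f^a$, $\mathcal{B}(V_{n}|_{x_2=0},\psi_{n})$ and $\mathcal{E}(V_{n}^{\pm},\Psi_{n}^{\pm})$ as sums of three kinds of contributions: a high-frequency remainder of $f^a$ (in \eqref{conv.1} only), a smoothed accumulated error involving $E_{n-1}$, $\widetilde{E}_{n-1}$, or $\hat{E}_{n-1}$, and the fresh step error $e_{n-1}$, $\tilde{e}_{n-1}$, or $\hat{e}_{n-1}$. Each contribution will be controlled by combining the smoothing estimates of Proposition \ref{pro.smooth}, the lifting bound of Lemma \ref{lem.smooth2}, the step-error bound of Lemma \ref{lem.sum1}, and the accumulated-error bound of Lemma \ref{lem.sum2}.

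For \eqref{Hl.b}, identity \eqref{conv.1} gives
\[
\mathcal{L}(V_{n},\Psi_{n}) - f^a = -(I-S_{\theta_{n-1}})f^a + (I-S_{\theta_{n-1}})E_{n-1} + e_{n-1}.
\]
Applying \eqref{smooth.p1b} to $f^a$ and invoking the smallness hypothesis $\|f^a\|_{H^{\alpha+1}_{\gamma}(\Omega_T)}/\varepsilon\ll 1$ bounds the first piece by $\tfrac{\varepsilon}{2}\theta_{n-1}^{s-\alpha-1}$. The second piece is bounded via \eqref{smooth.p1b} with reference regularity $\alpha+2$ together with Lemma \ref{lem.sum2}, producing $C\varepsilon^2\theta_{n-1}^{s-\alpha-1}$. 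For the third, Lemma \ref{lem.sum1} delivers $\|e_{n-1}\|_{H^{s}_{\gamma}(\Omega_T)}\leq C\varepsilon^2\theta_{n-1}^{L(s)-1}\Delta_{n-1}$; a direct case-by-case inspection of the definition of $L$ in Lemma \ref{lem.last} shows $L(s)-2\leq s-\alpha-1$ for every integer $s\in[3,\tilde{\alpha}-2]$, so this piece is likewise dominated by $C\varepsilon^2\theta_{n-1}^{s-\alpha-1}$. Summing the three pieces and taking $\varepsilon$ small and $\theta_0$ large recovers the factor $2\varepsilon$ of \eqref{Hl.b}. Estimate \eqref{Hl.c} is obtained by an entirely analogous three-term analysis of \eqref{conv.2}, with no $f^a$-contribution present; the range $[4,\alpha]$ sits comfortably inside $[3,\tilde{\alpha}-2]$.

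The more delicate case is \eqref{Hl.d}. Decomposition \eqref{conv.3} writes $\mathcal{E}(V_{n}^{\pm},\Psi_{n}^{\pm})$ as a lifted trace of $\mathcal{B}(V_{n}|_{x_2=0},\psi_{n})$ (up to selecting components) plus $(I-S_{\theta_{n-1}})$ applied to $\hat{E}_{n-1}^{\pm}-\mathcal{R}_T\widetilde{E}_{n-1,\cdot}$, plus the fresh error $\hat{e}_{n-1}^{\pm}-\mathcal{R}_T\tilde{e}_{n-1,\cdot}$. The accumulated piece is handled by Lemma \ref{lem.sum2} together with \eqref{smooth.p1b}; the extra half-derivative gained from $\mathcal{R}_T$ through Lemma \ref{lem.smooth2} is in fact favorable, as it lowers the exponent. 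The step-error piece is controlled by Lemma \ref{lem.sum1} at $s=3$, where $L(3)=9-2\alpha$ for $\alpha\geq 7$, so that $\theta_{n-1}^{L(3)-1}\Delta_{n-1}\leq C\theta_{n-1}^{2-\alpha}$. The lifted-trace piece requires $\|\mathcal{B}\|_{H^{5/2}_{\gamma}(\omega_T)}$, which is not directly furnished by \eqref{Hl.c} (stated only for $s\geq 4$); however, running the argument of the previous paragraph on \eqref{conv.2} at $s=3$ yields the auxiliary bound $\|\mathcal{B}(V_{n}|_{x_2=0},\psi_{n})\|_{H^{3}_{\gamma}(\omega_T)}\leq C\varepsilon^2\theta_{n-1}^{2-\alpha}$, whence Lemma \ref{lem.smooth2} combined with the embedding $H^{3}\hookrightarrow H^{5/2}$ bounds the lifted trace in $H^{3}_{\gamma}(\Omega_T)$ by the same quantity, completing \eqref{Hl.d}.

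The main obstacle is essentially bookkeeping: one must verify that the gains $L(s)-1$ from Lemma \ref{lem.sum1}, once multiplied by $\Delta_{n-1}\sim\theta_{n-1}^{-1}$, fit beneath $s-\alpha-1$ uniformly in $s\in[3,\tilde{\alpha}-2]$. This is precisely what forces $\alpha\geq 7$ and the choice $\tilde{\alpha}=\alpha+4$, in line with Lemma \ref{lem.sum2}. A secondary subtlety in \eqref{Hl.d} is the half-integer Sobolev index introduced by the lifting operator $\mathcal{R}_T$; this is resolved by running the \eqref{conv.2}-argument one step below the stated range, at $s=3$, in order to supply the required $H^{3}$-trace bound on $\mathcal{B}$.
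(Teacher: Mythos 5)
Your argument is correct and is essentially the paper's own proof: the paper defers to \cite[Lemmas 17--18]{CS08MR2423311} (and \cite[Lemma 8.13]{CSW17Preprint}), which proceed exactly as you do, by reading off \eqref{conv.1}--\eqref{conv.3} at step $m=n-1$ and combining \eqref{smooth.p1b}, Lemma \ref{lem.smooth2}, and Lemmas \ref{lem.sum1}--\ref{lem.sum2}, including the auxiliary $H^{3}_{\gamma}(\omega_T)$ bound on $\mathcal{B}(V_n|_{x_2=0},\psi_n)$ needed to handle the lifted trace term in \eqref{Hl.d}. The only loose end is the endpoint $s=\tilde{\alpha}-2=\alpha+2$ in \eqref{Hl.b}, where \eqref{smooth.p1b} with reference index $\alpha+1$ does not apply to $(I-S_{\theta_{n-1}})f^a$; there one instead uses \eqref{small} (or \eqref{smooth.p1a} and the triangle inequality) together with $\theta_0$ large, a routine fix.
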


Gathering Lemmas \ref{lem.Hl1}--\ref{lem.Hl2}, we have deduced  $(H_{{n}})$ from $(H_{{n}-1})$,
provided that $\alpha\geq 7$, $\tilde{\alpha}=\alpha+4$,  \eqref{small} holds,
$\varepsilon>0$ and $\|f^a\|_{H^{\alpha+1}_{\gamma}(\Omega_T)}/\varepsilon$ are sufficiently small,
and $\theta_0\geq1$ is large enough.
Fixing constants $\alpha$, $\tilde{\alpha}$, $\varepsilon>0$ and $\theta_0\geq1$,
we can prove $(H_{0})$ as in \cite{CS08MR2423311}.
\begin{lemma}\label{lem.H0}
	If $\|f^a\|_{H^{\alpha+1}_{\gamma}(\Omega_T)}/\varepsilon$ is sufficiently small,
	then $(H_0)$ holds.
\end{lemma}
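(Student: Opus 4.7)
The plan is to verify the four inequalities (a)--(d) of $(H_0)$ directly, exploiting that Assumption \textbf{(A-1)} forces $V_0 = \Psi_0 = \psi_0 = 0$. With this vanishing, (c) and (d) are immediate: $\mathcal{B}(V_0|_{x_2=0},\psi_0) = \mathbb{B}(U^{a},\varphi^a) = 0$ by \eqref{app.4}, and $\mathcal{E}(V_0,\Psi_0) = 0$ directly from the definition in \eqref{P.new}. Condition (b) reduces to $\|f^a\|_{H^s_\gamma(\Omega_T)} \leq 2\varepsilon\theta_0^{s-\alpha-1}$ for $s \in [3,\tilde{\alpha}-2]$: for $s \leq \alpha+1$ one uses $\|f^a\|_{H^s_\gamma} \leq \|f^a\|_{H^{\alpha+1}_\gamma}$ together with the smallness of $\|f^a\|_{H^{\alpha+1}_\gamma}/\varepsilon$ (the binding case is $s=3$, demanding $\|f^a\|_{H^{\alpha+1}_\gamma}/\varepsilon \leq 2\theta_0^{2-\alpha}$), while for $\alpha+1 < s \leq \alpha+2$ the bound $\|f^a\|_{H^s_\gamma} \leq \varepsilon$ furnished by \eqref{small} suffices since $\theta_0 \geq 1$.

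The substantive work lies in (a). Under \textbf{(A-1)}--\textbf{(A-3)} at $n=0$, the modified state in \eqref{modified} reduces to $V_{1/2}=\Psi_{1/2}=\psi_{1/2}=0$, so the effective linear problem \eqref{effective.NM} collapses to
\begin{align*}
\mathbb{L}'_e(U^a,\Phi^a)\,\delta\dot V_0 = S_{\theta_0}f^a,\qquad
\mathbb{B}'_e(U^a,\Phi^a)(\delta\dot V_0,\delta\psi_0) = 0,
\end{align*}
with zero past. The basic state $(U^a,\Phi^a)$ inherits from \eqref{app.1}--\eqref{app3} and \eqref{small} (with $\varepsilon \leq K_0$) all the hypotheses \eqref{bas.c1}--\eqref{bas.eq} and \eqref{H.L.1} required by Theorem \ref{thm:L}. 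The tame estimate \eqref{tame}, together with the smoothing bound $\|S_{\theta_0}f^a\|_{H^{s+1}_\gamma} \leq C\theta_0^{(s-\alpha)_+}\|f^a\|_{H^{\alpha+1}_\gamma}$ from \eqref{smooth.p1a} (applied also with $\beta=4$, where no $\theta_0$-growth appears since $\alpha+1\geq 4$) and with \eqref{small}, then yields
\begin{align*}
\|\delta\dot V_0\|_{H^s_\gamma(\Omega_T)} + \|\delta\psi_0\|_{H^{s+1}_\gamma(\omega_T)} \leq C\bigl(\theta_0^{(s-\alpha)_+} + \varepsilon\bigr)\|f^a\|_{H^{\alpha+1}_\gamma(\Omega_T)}
\end{align*}
for all $s \in [3,\tilde{\alpha}]$.

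Next, $\delta\Psi_0^\pm$ is obtained as the unique smooth solution (vanishing for $t<0$) of the transport equations \eqref{delta_Psi_n+}--\eqref{delta_Psi_n-}, which at $n=0$ reduce to linear transport equations with smooth coefficients built from $(U^a,\Phi^a)$ and source linear in $\delta\dot V_0$, since $g_{0,j}=0$ and $h_0^\pm=0$. Standard $H^s_\gamma$ energy estimates combined with Moser-type inequalities, controlled by \eqref{small}, give $\|\delta\Psi_0\|_{H^s_\gamma} \lesssim \|\delta\dot V_0\|_{H^s_\gamma}$, and then $\delta V_0$ is recovered from the good-unknown identity \eqref{good.NM} with an analogous bound. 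Since $\Delta_0 \sim \theta_0^{-1}$, the target $\varepsilon\theta_0^{s-\alpha-1}\Delta_0 \sim \varepsilon\theta_0^{s-\alpha-2}$ is met provided $\|f^a\|_{H^{\alpha+1}_\gamma}/\varepsilon$ absorbs the factor $\theta_0^{(s-\alpha)_++2}$ uniformly in $s\in[3,\tilde{\alpha}]$: the binding case is $s=3$ (yielding $\|f^a\|_{H^{\alpha+1}_\gamma}/\varepsilon \lesssim \theta_0^{1-\alpha}$), which, since $\theta_0$, $\alpha$, $\tilde{\alpha}$ are already fixed, is a single smallness condition.

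The only real obstacle is bookkeeping: verifying that one smallness threshold for $\|f^a\|_{H^{\alpha+1}_\gamma}/\varepsilon$ simultaneously enforces (a) for $s\in[3,\tilde{\alpha}]$ and (b) for $s\in[3,\tilde{\alpha}-2]$. Because $V_0$, $\Psi_0$, $\psi_0$ vanish, there are no nonlinear remainders at this first step: the estimates are linear in $f^a$, and all $\theta_0$-growth comes solely from the smoothing exponents $(s-\alpha)_+$ produced by $S_{\theta_0}$, all of which are tamed by the fixed-$\theta_0$ smallness of $\|f^a\|_{H^{\alpha+1}_\gamma}/\varepsilon$.
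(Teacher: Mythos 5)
Your proposal is correct and follows essentially the same route as the paper, which at this point simply defers to the Coulombel--Secchi argument: since $(V_0,\Psi_0,\psi_0)=0$, points (b)--(d) of $(H_0)$ reduce to bounds on $f^a$ or vanish identically via \eqref{app.4}, and point (a) follows by solving the step-zero problem \eqref{effective.NM} with data $(S_{\theta_0}f^a,0)$ around the basic state $(U^a,\Phi^a)$, applying the tame estimate \eqref{tame} together with \eqref{smooth.p1a} and \eqref{small}, recovering $\delta\Psi_0$ from the transport equations and $\delta V_0$ from \eqref{good.NM}, and absorbing the fixed factors $\theta_0^{(s-\alpha)_+}$, $\Delta_0^{-1}\sim\theta_0$ by the smallness of $\|f^a\|_{H^{\alpha+1}_{\gamma}(\Omega_T)}/\varepsilon$. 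Your bookkeeping of the binding cases ($s=3$ for both (a) and (b), and $s=\alpha+2$ handled by \eqref{small}) is accurate, so no gap remains.
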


\begin{proof}[{\bf Proof of Theorem \ref{thm}}]
	We consider initial data $(U_0^{\pm},\varphi_0)$ satisfying all the assumptions of Theorem \ref{thm}.
	Let $\tilde{\alpha}=\mu-2$ and $\alpha=\tilde{\alpha}-4\geq 7$.
	Then the initial data $U_0^{\pm}$ and $\varphi_0$ are compatible up to order $\mu=\tilde{\alpha}+2$.
	From \eqref{app3} and \eqref{app5}, we obtain \eqref{small} and all the requirements
	of Lemmas \ref{lem.Hl1}--\ref{lem.H0},
	provided that $(\tilde{U}_0^{\pm},\varphi_0)$ is sufficiently small
	in $H^{\mu+1/2}(\mathbb{R}^2_+)\times H^{\mu+1}(\mathbb{R})$
	with $\tilde{U}_0^{\pm}:=U_0^{\pm}-\widebar{U}^{\pm}$.
	Hence, for small initial data, property $(H_{{n}})$ holds
	for all integers ${n}$. In particular, we have
	\begin{align*}
	\sum_{k=0}^{\infty}\left(\|(\delta V_k,\delta \Psi_k)\|_{H^{s}_{\gamma}(\Omega_T)}+\|\delta\psi_k\|_{H^{s+1}_{\gamma}(\omega_T)} \right)
	\leq C\sum_{k=0}^{\infty}\theta_k^{s-\alpha-2} <\infty\qquad \textrm{for }s\in[3,\alpha-1].
	\end{align*}
	Thus, sequence $(V_{k},\Psi_{k})$ converges
	to some limit $(V,\Psi)$ in $H^{\alpha-1}_{\gamma}(\Omega_T)$,
	and sequence $\psi_{k}$ converges to some limit $\psi$ in $H^{\alpha}_{\gamma}(\Omega_T)$.
	Passing to the limit in \eqref{Hl.b}--\eqref{Hl.c} for $s=\alpha-1=\mu-7$,
	and in \eqref{Hl.d}, we obtain \eqref{P.new}.
	Therefore, $(U, \Phi)=(U^a+V, \Phi^a+\Psi)$ is a solution on $[0,T]\times \mathbb{R}^2_+$ of the original
	problem \eqref{Phi.a}--\eqref{E0} and \eqref{Phi.b}. This completes the proof.
\end{proof}

\subsection*{Acknowledgement}
 {The research of Alessandro Morando and Paola Trebeschi was supported in part by the grant from Ministero dell'Istruzione, dell'Universit\`{a} e della Ricerca under contract PRIN 2015YCJY3A-004.}
The research of Tao Wang was supported in part by the grants from National Natural Science Foundation of China under contracts 11601398 and 11731008.

\bibliographystyle{abbrvnat}

\bibliography{nonisentropic}

\end{document}